\theoremstyle{definition}
\newcommand{\nbb}{\mathbb{N}}
\newcommand{\rbb}{\mathbb{R}}
\newcommand{\C}{\mathcal{C}}
\renewcommand{\S}{\mathcal{S}}
\renewcommand{\H}{\mathcal{H}}
\newcommand{\K}{\mathcal{K}}
\newcommand{\Atilde}{\tilde{A}}
\newcommand{\xbar}{\overline{x}}
\newcommand{\vbar}{\overline{v}}
\newcommand{\xibar}{\overline{\xi}}
\newcommand{\xtil}{\widetilde{x}}
\newcommand{\xhat}{\widehat{x}}
\newcommand{\vtil}{\widetilde{v}}
\newcommand{\vhat}{\widehat{v}}
\newcommand{\Ktilde}{\widetilde{K}}
\newcommand{\xitilde}{\tilde{\xi}}
\newcommand{\Ftilde}{\tilde{F}}
\newcommand{\Wtilde}{\tilde{W}}
\newcommand{\xihat}{\widehat{\xi}}
\newcommand{\sigmatilde}{\widetilde{\sigma}}
\newcommand{\U}{U}
\newcommand{\Cpastrho}{C_\varrho((-\infty,0],\mathbb{R}^2)}
\newcommand{\f}{\varphi}
\newcommand{\E}{\mathbb{E}}
\renewcommand{\P}{\mathbb{P}}
\newcommand{\Hs}{\mathcal{H}_{-s}}
\newcommand{\Z}{\mathbf{Z}}
\newcommand{\close}{\!\!\!}
\newcommand{\Law}{\mathrm{Law}}
\newcommand{\ccdot}{\;\cdot\;}
\theoremstyle{definition}
\theoremstyle{plain}
\newtheorem{theorem}{Theorem}[section]
\newtheorem{corollary}[theorem]{Corollary}
\newtheorem{lemma}[theorem]{Lemma}
\newtheorem{assumption}[theorem]{Assumption}
\newtheorem{proposition}[theorem]{Proposition}
\theoremstyle{definition}
\newtheorem{definition}[theorem]{Definition}
\newtheorem{remark}[theorem]{Remark}
\numberwithin{equation}{section}
\crefname{algorithm}{Algorithm}{Algorithms}
\crefname{assumption}{Assumption}{Assumptions}
\crefname{lemma}{Lemma}{Lemmas}
\crefname{theorem}{Theorem}{Theorems}
\crefname{remark}{Remark}{Remarks}
\crefname{corollary}{Corollary}{Corollaries}
\crefname{figure}{Fig.}{Figures}
\crefname{section}{Section}{Sections}
\crefname{proposition}{Proposition}{Propositions}
\crefname{definition}{Definition}{Definitions}
\author[ D.~Herzog and H.~Nguyen]{David P.~Herzog$^1$, Jonathan C. Mattingly$^2$ and Hung D.~Nguyen$^3$}
\address{$^1$ Department of Mathematics, Iowa State University, Iowa, USA}
\address{$^2$ Department of Mathematics, Duke University, North Carolina, USA}
\address{$^3$ Department of Mathematics, University of California, Los Angeles, California, USA}
\begin{document}

\title{Gibbsian dynamics and the generalized Langevin equation}
\maketitle

\begin{abstract}
We study the statistically invariant structures of the nonlinear generalized Langevin equation (GLE) with a power-law memory kernel. For a broad class of memory kernels, including those in the subdiffusive regime, we construct solutions of the GLE using a Gibbsian framework, which does not rely on existing Markovian approximations. Moreover, we provide conditions on the decay of the memory to ensure  uniqueness of statistically steady states, generalizing previous known results for the GLE under particular kernels as a sum of exponentials.  
\end{abstract}

\section{Introduction}

\subsection{Overview} \label{sub:overview}
We study the generalized Langevin equation 
\begin{align}
d\,x(t)&=v(t)\,dt,\notag\\
d\,v(t)&=-v(t)\, dt- \U'(x(t)) \, dt- \int_{-\infty}^t\close K(t-r)v(r)\, dr \, dt +  \sqrt{2} \,dW(t)+ F(t) \, dt,\label{eqn:GLE}
\end{align}
describing the motion of a particle with position $x(t)\in \rbb$ and velocity $v(t)\in \rbb$ in a potential~$\U$.  The particle is subject to a viscous friction force $- v(t)\,dt$ and a convolution term involving the \emph{convolution kernel} $K$, modeling a thermal drag force with memory effects.  By the \emph{fluctuation-dissipation} relation, both of these forces are respectively balanced by stochastic processes $W(t)$ and $F(t)$, where $W(t)$ is a standard one-dimensional Brownian motion and $F(t)$ is a mean-zero stationary Gaussian process with covariance given by 
\begin{align}\label{eq:cov}
\E[F(t_1)F(t_2)]=K(|t_1-t_2|), \,\,\,\text{ for all } t_1, t_2 \in \rbb.  
\end{align}
Note that the memory in equation~\eqref{eqn:GLE} is present both in the integral term with
the kernel $K$ and in the Gaussian process $F(t)$ which is not white
in time.

In the absence of memory effects, that is setting $K\equiv 0$ and $F\equiv 0$ in~\eqref{eqn:GLE} above, large-time properties of the resulting Markovian system are well-understood, in the sense that under general conditions on the potential $\U$, it is known
 that the system admits a unique ergodic invariant measure $\pi(x,v)$
 on $\mathbb{R}^2$ which is exponentially attractive and whose formula is given by
\begin{equation} \label{eqn:marginal-x-v}
	\pi(dx,dv) \propto \exp(- H(x,v) ) \,dx\, dv,
\end{equation}
where 
\begin{align*}
H(x,v) = \frac{ v^2}{2}+ \U(x)
\end{align*}
denotes the Hamiltonian of the system.  For example, see~\cite{conrad2010construction, cooke2017geometric, herzog2019ergodicity,mattingly2002ergodicity, pavliotis2014stochastic, Villani_06} and the references within. When $K\equiv 0$ and $F\equiv
0$, one can equally speak of
stationary solutions in path space $C(\rbb ; \mathbb{R}^2)$ of  \eqref{eqn:GLE}  as they are in one-to-one
correspondence with the invariant measures on $\mathbb{R}^2$, namely the fixed points of
the Markov semigroup generated by \eqref{eqn:GLE} without the memory
terms. Here, a process $X(t)$, $t\in(-\infty,\infty)$, is called \emph{stationary} if the distribution
$$\big(X(t_1+s),\dots,X(t_n+s)\big),\quad t_1<\dots<t_n,$$
does not depend on $s$. For further discussion, see Sections \ref{sec:SkewInvMeasure}--\ref{sec:MarkovInvMeasure} below. On the other hand, in the presence of memory in~\eqref{eqn:GLE},
comparatively much less is known about both the existence and
uniqueness of statistically stationary states under general conditions
on $K$.  The goal of this paper is to make progress on bridging this
gap between the standard Langevin equation ($K\equiv 0, F\equiv 0$)
and its generalized counterpart~\eqref{eqn:GLE} with memory.  

In general, there is no Markovian dynamics on $\mathbb{R}^2$ associated
with~\eqref{eqn:GLE}; and hence, no directly analogous concept of an invariant
measure on $\mathbb{R}^2$. Thus, we are left to study the stationary solutions of
~\eqref{eqn:GLE}   in $C(\rbb ; \mathbb{R}^2)$ as
this concept remains well-defined.
One can always associate such a stationary solution to a deterministic
dynamical system $X=(x,v,W,F)$ where $X \in C(\rbb ; \mathbb{R}^4)$ 
represents the dynamics lifted to the path space. Here 
the dynamics is given by the shift map $\theta_t\colon
C(\rbb;  \mathbb{R}^4) \rightarrow
C(\rbb; \mathbb{R}^4)$ defined by
\begin{equation} \label{form:theta_t}
(\theta_t X)(s)=\theta_tX(s)=X(t+s),
\end{equation}
for $X=(x,v,W,F) \in C(\rbb; \mathbb{R}^4)$. As with any deterministic
dynamical system, we can view this as a (nonrandom) Markov process
whose invariant measures are the stationary measures of
$X=(x,v,W,F)$. However, the phase space of such a representation is so
large to be almost useless. The concepts of ``future'' and ``past''
which are so powerful in a more standard Markovian representation have
little power in this context. In particular, the future trajectories encode the past and
hence do not necessarily have the same strong independence properties
enjoyed by a more standard Markovian structure.  One of the central themes of this note is that there are representations lying between the standard Markov representation of memoryless
Langevin dynamics on $\mathbb{R}^2$ and the lifted dynamics to the
path space $C(\rbb; \mathbb{R}^4)$.  Moreover, these representations can be applied in a fruitful way to the case of the generalized Langevin equation~\eqref{eqn:GLE}.

Although there is no general way to represent solutions of~\eqref{eqn:GLE} as a Markov process on $\mathbb{R}^2$, there are special cases where one can still define a convenient Markov process
associated to~\eqref{eqn:GLE} on an extended state space. In particular, when the memory kernel $K(t)$ can be written as a finite sum of exponentials; that is, 
\begin{align}
\label{eqn:Kexpr}
K(t) = \sum_{k=1}^n c_k e^{-\lambda_k t},
\end{align}
for some constants $c_k, \, \lambda_k >0$, one can augment the resulting system~\eqref{eqn:GLE} by a finite number of auxiliary variables to produce a Markov process on a higher, but finite-dimensional space.  This corresponding finite-dimensional system was studied rigorously in \cite{ottobre2011asymptotic,pavliotis2014stochastic}.  There, under general hypotheses on $\U$, it was shown that the system is uniquely ergodic and the marginal invariant distribution of the pair $(x,v)$ is precisely $\pi$ as in~\eqref{eqn:marginal-x-v} \cite{goychuk2012viscoelastic,ottobre2011asymptotic,
pavliotis2014stochastic}.  However, because the sum above is finite, it cannot describe a kernel with power-law decay, i.e., a kernel $K(t)$ satisfying 
\begin{align}
\label{eqn:Kalpha}
K(t) \sim t^{-\alpha}\,\, \text{ as } \,\,t\rightarrow\infty,
\end{align}
 for some $\alpha >0$. Subsequently, this approach was extended to
 handle such memory kernels by writing $K$ as an infinite-sum of
 exponentials $(n\rightarrow \infty, c_k=c_k(\lambda_k)>0$
 in~\eqref{eqn:Kexpr})~\cite{glatt2018generalized}. See
 Remark~\ref{rem:specialform} below.  The resulting dynamics is an
 infinite-dimensional Markov process on a Sobolev-like space and still has a meaningful sense of ``future'' and
 ``past''.  In particular, the process is amenable to classical Markovian techniques despite being infinite dimensional.

In this infinite-dimensional context~\cite{glatt2018generalized}, it
 was shown that there exists an explicit invariant probability measure
 whose $(x,v)$--marginal agrees with~\eqref{eqn:marginal-x-v}. 
 This is true for memory kernels in this specific form regardless of the memory decay rate $\alpha >0$ as in~\eqref{eqn:Kalpha}.  However, to establish uniqueness of this measure, the restriction $\alpha \in (1, \infty)$ as in~\eqref{eqn:Kexpr} was imposed leaving out the important \emph{subdiffusive regime} of $\alpha \in (0,1)$ (see the discussion in Section~\ref{sub:science} below).  One of our goals here is to push through this threshold.  Additionally, we will study~\eqref{eqn:GLE} for the Gaussian forcing as
   in~\eqref{eq:cov}  both when the memory kernel satisfies the
   structural assumption in \eqref{eqn:Kexpr} with $n=\infty$ and
   alternatively when the memory kernel has power-law
   decay~\eqref{eqn:Kalpha} but cannot be expressed as an infinite sum
   of exponentials.

 For general stationary Gaussian forcing $F$, there is not 
 necessarily a Markovian dynamics associated
 to~\eqref{eqn:GLE}.\footnote{One can always consider as the
   state space the path space
   of a process on the time interval $(-\infty,\infty)$. The
   dynamics is then the deterministic shift of the
   trajectories. Lifting of the deterministic process to pathspace is
   not the type of stochastic Markov dynamics we seek.} Hence, we lack
 a natural notion of an Markov invariant measure and study the stationary
 solutions of ~\eqref{eqn:GLE} instead. We give general conditions
 guaranteeing that there is at most one stationary solution.
Although there is no Markov formulation of the stochastic dynamics,
 there is however a natural \emph{skew-flow} on the \emph{infinite past}
 $C((-\infty, 0]; \rbb^2)$ of the trajectories of $(x,v)$ fibered over
 the Gaussian forcing $F$. That is, given a noise realization and an initial trajectory on $(-\infty,0]$, we evolve~\eqref{eqn:GLE} on $[0,t]$, $0<t$, hence obtaining a solution path on $(-\infty,t]$. See Section~\ref{sec:Skew} for a more detailed discussion.

 When~\eqref{eqn:Kexpr} holds with $n<\infty$ or $n=\infty$, then there is a natural Markovian
 formulation of the stochastic dynamics
 \cite{glatt2018generalized,ottobre2011asymptotic,pavliotis2014stochastic}. We will study a different
 Markovian formulation than used in those works.
 The  assumption
 in \eqref{eqn:Kexpr}  implies that $F(t)$ can be constructed as a
 functional of a
 (possibly) infinite collection of independent Brownian Motions on the
 time interval $(-\infty, t]$. We formulate a
 Markovian dynamics which takes as its state space the trajectories
 of $(x,v)$ on the
 \emph{infinite past} $C((-\infty, 0]; \rbb^2)$ and the infinite past
 of the collection of independent Brownian Motions used to construct
 $F$. We show that when $\alpha>1/2$, this dynamics has at most one invariant measure; or
 equivalently, at most one stationary solution, cf. Theorem~\ref{thm:unique}.

 \begin{remark}\label{rem:Gibbs}   {\bf Gibbsian Dynamics:} As previously mentioned above, it is possible
   to enlarge the state space of any dynamics to make it
   Markovian. In the extreme, by making the state space the entire
   trajectory $\{\big(x(t),v(t),F(t)\big) : t \in(-\infty,\infty)\}$, the dynamics is simply the shift map
   $\theta_t\colon (x,v,F)\mapsto
   \big(x(\ccdot+t),v(\ccdot+t),F(\ccdot+t)\big)$. At this level of
   generality, the fact that the dynamics is Markovian provides little
   useful structure. However, our setting below has more structure.

   In the continuous-time Markov setting, the distribution
   of infinitesimal increments is a function the current state of the
   process. In the Gibbsian setting, as envisioned in
   \cite{weinan2001gibbsian,Mattingly02}, the distribution
   of infinitesimal increments is a function of the entire past.
   We will return to this setting in Section~\ref{sec:simplfiedSetting}.
   The term
   Gibbssian comes from the dynamics being dictated, not by a
   compatible family of Markov measures (depending only on the
   boundary data in space-time), but rather a compatible family of
   Gibbs measures (in the general sense of \cite{Georgii}). 
 \end{remark}

\subsection{Physical motivation}
\label{sub:science}
It is important to note some of the physical reasons for considering memory kernels $K$ in general, and in the power-law regime in particular. The standard Langevin equation is commonly used to describe microparticle motion embedded in Newtonian fluids, which amounts to the implicit assumption that there is no time correlation between the foreign microparticles and the thermally fluctuating fluid molecules. Following Newton's Second Law \cite{pavliotis2014stochastic}, the two-dimensional Langevin equation has the form~\eqref{eqn:GLE} with $K\equiv 0$ and $F\equiv 0$.  On the other hand, for viscoelastic fluids, elasticity induces time correlation between foreign particles and fluid molecules, leading to memory effects.  Thus the standard Langevin equation is not sufficient to describe the motion of the particles suspended in the fluid. In order to capture such phenomena, the generalized Langevin equation~\eqref{eqn:GLE} with general $K$ was introduced in~\cite{kubo1966fluctuation,mori1965continued,mori1965transport} and later popularized in~\cite{mason1995optical}.

It is known that the unconstrained GLE (i.e. $\U\equiv 0$ in~\eqref{eqn:GLE}) exhibits \emph{anomalous diffusion}; that is, the mean-squared displacement $\E x(t)^2$ may not be asymptotically proportional to  $t$ as $t\rightarrow \infty$.   In fact, it was shown in \cite{didier2020asymptotic,mckinley2018anomalous} that when $K\in L^1(\rbb)$, the unconstrained GLE is \emph{asymptotically diffusive}, i.e., $\E x(t)^2\sim t$ as $t\to\infty$. Otherwise, if $K(t)\sim t^{-\alpha}$, $\alpha\in(0,1)$, then the unconstrained GLE is \emph{asymptotically subdiffusive}, i.e. $\E x(t)^2 \sim t^\alpha$ and when $\alpha=1$, there is a transition phase between \emph{diffusion} and \emph{subdiffusion}, i.e., $K(t)\sim t^{-1}$ implies $\E x(t)^2\sim t/\log(t)$ as $t\to\infty$.  For viscoelastic fluids, the subdiffusive regime is observed in experiments~\cite{ghosh:cherstvy:grebenkov:metzler:2016,levine2000one,
meroz:sokolov:2015,metzler:klafter:2000,saxton1994anomalous,saxton1996anomalous,
sokolov2008statistics}, which is why we are primarily interested in the scenario where $K$ has a power-law decay rate $\alpha \in (0,1]$.       

\subsection{Paper Overview}
The rest of the paper is organized as follows.  In Section~\ref{sec:assumption}, we introduce assumptions and briefly state the well-posedness result for~\eqref{eqn:GLE}. In Section~\ref{sec:pathspace}, we discuss the solutions' structures in accordance to different assumptions on the memory kernel and the noise. In particular, we will see that the dynamics~\eqref{eqn:GLE} induces a skew-flow on the \emph{skew} path space.  Section~\ref{sec:invariant-measure} discusses the associated stationary solution(s) for this dynamics.  Furthermore, we prove our main result on the uniqueness of the associated stationary measures in this section. The argument proving uniqueness, in particular, makes use of some auxiliary results collected and proved in Section~\ref{sec:auxil-result}. In Section~\ref{sec:existence}, we establish the existence of a stationary measure when the kernel can be written as an infinite sum of exponentials. In Appendix~\ref{sec:well-posed}, we establish the well-posedness result in detail. In Appendix~\ref{sec:F}, we prove a technical result which allows us to bound the expected value of the maximum of $F(t)$ over finite intervals of time.  This result is employed in the proof of well-posedness.

\section{Assumptions and well-posedness} \label{sec:assumption}

\subsection{Well-posedness} \label{sec:wellposed}
We begin by clarifying what we mean by a solution of
\eqref{eqn:GLE}. Throughout, we consider a probability space
$(\Omega,\mathcal{F}, \P,\{\mathcal{F}_t\})$ where the set $\Omega$ is endowed with a probability measure $\P$ and  a 
filtration of sigma-algebras $\{\mathcal{F}_t\colon t \in \rbb\}$.

\begin{definition}[Solution on $(-\infty,\infty)$] \label{def:infiniteSolution}A \emph{(weak) solution} to \eqref{eqn:GLE}  on the time
  interval $(-\infty,\infty)$ is a
  probability space $(\Omega,\mathcal{F}, \P,\{\mathcal{F}_t\})$ on which a triple of stochastic processes $(\xi,F,W)$ is defined so that the following conditions are satisfied:
  \begin{enumerate}
  \item   $\xi(t)=(x(t),v(t))$, $F(t)$ and 
    $W(t)$ are all stochastic processes adapted to the filtration $\{\mathcal{F}_t\}$.
  \item $F(t,\omega)$ is a stationary Gaussian process with mean zero and
    covariance $K$ in the sense of \eqref{eq:cov} and  $W(t,\omega)$ is a standard, two-sided Brownian Motion both
    with respect to $\{\mathcal{F}_t\}$ such that $F$ and $W$ are independent.
  \item   With probability one, the triple $(\xi,F,W)$ solves \eqref{eqn:GLE}; that is, with probability one, for
    all $t_0, t_1 \in \rbb$ with $t_0<t_1$ we have
    
    \begin{equation}
      \label{eq:SolveIntEquation}
    \begin{aligned}
      x(t_1)-x(t_0)&=\int_{t_0}^{t_1}v(t)\,dt,\\
v(t_1)- v(t_0)&=-\int_{t_0}^{t_1}\Big[v(t)\, + \U'(x(t)) \,+ \int_{-\infty}^t K(t-r)v(r)\, dr \Big]\, dt \\&\qquad+  \sqrt{2} \big(W(t_1)-W(t_0)\big)+ \int_{t_0}^{t_1} F(t)\, dt.
\end{aligned}
    \end{equation}
  \end{enumerate}
\end{definition}

 \begin{definition}[Solution with an initial past] \label{def:initialSolution} A (weak) solution to \eqref{eqn:GLE}  on the time
  interval $(T_0,T_1)$ with $T_0 \in \rbb$ and $T_1 \in \rbb \cup
  \{\infty\}$ with \textit{initial past} $\xi_0=(x_0,v_0) \in
  C((-\infty,T_0];\rbb^2)$ satisfies the same conditions as in
  \cref{def:infiniteSolution} but the stochastic processes need only
  be defined on the time interval $(T_0,T_1)$ with the exception of
  $\xi=(x,v)$ which is defined on $(-\infty,T_1)$ with
  $\xi(t)=\xi_0(t)$ for $t \in (-\infty,T_0]$. Additionally,
  \eqref{eq:SolveIntEquation} need only hold for $t_0,t_1 \in (T_0,T_1)$.  
\end{definition}

\begin{remark}
In this paper, we will prove strong existence of solutions on $[T_0, \infty)$ given an initial past $\xi_0=(x_0, v_0)$ belonging to an appropriate subclass of $C((-\infty, T_0]; \rbb^2)$.  Moreover, we will also establish weak uniqueness, which together is stronger than weak existence and weak uniqueness.   
\end{remark}

Throughout, we will employ the following assumption on the potential $\U$ in~\eqref{eqn:GLE}.   
\begin{assumption}  \label{cond:Phi}  The potential $\U\colon \mathbb{R}\rightarrow \mathbb{R}$ is such that $\U\in C^3(\rbb)$, $\int_\rbb |\U'(x)|e^{-\U(x)}dx<\infty$ and the global estimate holds
\begin{align*}
b(\U(x)+1)\ge |x|^{1+\delta} \,\, \text{ for all }\,\, x\in \mathbb{R},
\end{align*}
for some constants $b>0$ and $\delta\in(0,1)$.  \end{assumption}
\begin{remark}
The first two conditions on $U$ are not directly used in this paper. They were previously used in \cite[Theorem 7]{glatt2018generalized} to construct an explicit invariant measure for the Markov system~\eqref{eqn:GLE-Markov:xvz} below. We then will use this result to construct a stationary measure for the dynamics~\eqref{eqn:GLE} in Section~\ref{sec:existence}.
\end{remark}
We also use the following condition on the memory kernel.
\begin{assumption} \label{cond:K:supK(t+s)/K(s)}
$K\in C^1([0, \infty); [0, \infty))$ and there exists $\widetilde{K}\in C([0, \infty))$ for which
\begin{align*}
\sup_{s\ge 0}\frac{K(t+s)}{K(s)}= \widetilde{K}(t) \,\,\, \text{ for all } \,\,\, t \geq 0. 
\end{align*}
\end{assumption} 
In order to state our main existence and uniqueness result, for $t\in \rbb$ let 
\begin{equation} \label{eqn:Cpast:int.K'(r).x(r)<infty}
\C(-\infty,t] := \bigg\{ (x,v)\in C((-\infty,t];\rbb^2) \, : \, \int_{-\infty}^{t}\close K(t-r)|v(r)|dr<\infty\bigg\}.
\end{equation}

\begin{proposition} \label{prop:well-posedness}
  Suppose that Assumption~\ref{cond:Phi} and Assumption~\ref{cond:K:supK(t+s)/K(s)} are satisfied.  Then there exists a subset $\K \subset C((-\infty,\infty);\rbb)$ so that
  $\P( F \in \K)=1$ and for every $t_0 \in \rbb$, $F \in \K$ and
every initial condition $\xi_0=(x_0,v_0)\in\C(-\infty,t_0]$, there exists a unique solution $\xi=(x,v)$ with initial
 past $\xi_0$ on the time interval $[t_0, \infty)$ such that $\xi\in \C(-\infty,t]$ for all $t\geq t_0$. Furthermore, we have the energy estimate
 \begin{multline}\label{ineq:energy-estimate}
 \E\sup_{t_0\leq r\leq t}H(x(r),v(r)) \\
 \leq \bigg[H(x_0(t_0),v_0(t_0))+\bigg( \displaystyle\int_{-\infty}^{t_0} \close K(t_0-r)|v_0(r)|dr\bigg)^2+\E\sup_{t_0\le r\le t}F(r)^2+ 1\bigg] e^{c(t_0,t)}, 
 \end{multline}
where we recall that $H(x,v)= \frac{1}{2}v^2+ \U(x)$.  
\end{proposition}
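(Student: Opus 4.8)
The plan is to decouple the contribution of the infinite past from the forward evolution, run a Picard fixed‑point argument to obtain a local solution on $[t_0,\infty)$, and then close an a priori energy estimate by applying It\^o's formula to $H$ so as to rule out explosion and simultaneously obtain~\eqref{ineq:energy-estimate}; pathwise uniqueness is handled by a Gr\"onwall argument on compact time intervals. For the first step, fix $t_0\in\rbb$ and $\xi_0=(x_0,v_0)\in\C(-\infty,t_0]$, and for $t\ge t_0$ and a candidate solution $\xi=(x,v)$ agreeing with $\xi_0$ on $(-\infty,t_0]$ write
\[
\int_{-\infty}^{t}\! K(t-r)v(r)\,dr \;=\; g_{\xi_0}(t) \;+\; \int_{t_0}^{t}\! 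K(t-r)v(r)\,dr, \qquad g_{\xi_0}(t):=\int_{-\infty}^{t_0}\! K(t-r)v_0(r)\,dr .
\]
Assumption~\ref{cond:K:supK(t+s)/K(s)} is what makes the first term manageable: for $r\le t_0\le t$ we have $K(t-r)=K\big((t-t_0)+(t_0-r)\big)\le \widetilde K(t-t_0)\,K(t_0-r)$, whence $|g_{\xi_0}(t)|\le \widetilde K(t-t_0)\int_{-\infty}^{t_0} K(t_0-r)|v_0(r)|\,dr<\infty$, the last integral being finite by the definition~\eqref{eqn:Cpast:int.K'(r).x(r)<infty} of $\C(-\infty,t_0]$; continuity of $t\mapsto g_{\xi_0}(t)$ on $[t_0,\infty)$ follows from continuity of $K$ and dominated convergence (dominating, on any compact $t$–interval, by $\sup\widetilde K$ times $K(t_0-\cdot)|v_0(\cdot)|$). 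Thus on $[t_0,\infty)$ equation~\eqref{eqn:GLE} reduces to a Volterra integro‑stochastic differential equation with an extra, $\xi_0$‑determined, continuous drift $g_{\xi_0}$. We take $\K$ to be the set of $F\in C((-\infty,\infty);\rbb)$ of at most polynomial growth; that $\P(F\in\K)=1$, together with the finiteness of $\E\sup_{[a,b]}F^2$ on compacts needed below, follows from $K\in C^1$ (so $\E|F(t)-F(s)|^2\lesssim|t-s|$ and Kolmogorov's criterion applies to the Gaussian process $F$) together with the maximal bound proved in Appendix~\ref{sec:F}.

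\emph{Local existence and pathwise uniqueness.} Working pathwise on a fixed $\omega$ with $F(\cdot,\omega)\in\K$, replace $\U'$ by a globally Lipschitz truncation $\U'_N$ (legitimate since $\U\in C^3$, hence $\U'$ is locally Lipschitz) and consider on $C([t_0,t_0+\tau];\rbb^2)$ the map sending $(x,v)$ to the right‑hand side of the integrated system~\eqref{eq:SolveIntEquation}, with the memory term split as above. The convolution operator $v\mapsto\int_{t_0}^{\,\cdot}K(\,\cdot-r)v(r)\,dr$ has operator norm at most $\int_0^{\tau}K(s)\,ds$, which tends to $0$ as $\tau\downarrow0$ since $K$ is continuous; combining this with the Lipschitz constant of $\U'_N$ and of $-v$ shows the map is a contraction for $\tau=\tau(N)$ small, uniformly in the value at $t_0$. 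This yields a unique solution of the truncated equation, extended step by step over intervals of length $\tau(N)$ up to $\zeta_N:=\inf\{t\ge t_0:|x(t)|+|v(t)|\ge N\}$, on $[t_0,\zeta_N)$ solving the genuine equation. Pathwise uniqueness follows by the same type of estimate: the difference of two solutions sharing the same initial past and the same $(W,F)$ solves a \emph{noise‑free} linear Volterra equation (the $g_{\xi_0}$ terms cancel), and Gr\"onwall on each $[t_0,T]$ — using the bound of the next step to confine both solutions to a common bounded set where $\U'$ is Lipschitz — forces the difference to vanish.

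\emph{Energy estimate and no explosion.} Apply It\^o's formula to $H(x(t),v(t))=\tfrac12 v(t)^2+\U(x(t))$ along the solution stopped at $\zeta_N$; the $\U'(x)v$ terms cancel and the quadratic variation of $\sqrt2\,dW$ contributes $+\,dt$, so that for $r\in[t_0,t\wedge\zeta_N]$
\begin{multline*}
H(x(r),v(r)) = H(x_0(t_0),v_0(t_0)) \\ + \int_{t_0}^{r}\!\Big(-v(s)^2 - v(s)\!\int_{t_0}^{s}\! K(s-u)v(u)\,du - v(s)g_{\xi_0}(s) + v(s)F(s) + 1\Big)ds + \sqrt2\int_{t_0}^{r}\! v(s)\,dW(s).
\end{multline*}
We then bound each term over $r\le t\wedge\zeta_N$: the cross‑terms $|v(s)g_{\xi_0}(s)|$ and $|v(s)F(s)|$ are split by Young's inequality into a multiple of $v(s)^2$ (absorbed into $-v(s)^2$ or fed into the Gr\"onwall loop via $v^2\le 2(H-\inf_\rbb\U)$, with $\inf_\rbb\U>-\infty$ by coercivity) plus $\tfrac12 g_{\xi_0}(s)^2$ — controlled using the Step‑1 bound by $\tfrac12\widetilde K(t-t_0)^2\big(\int_{-\infty}^{t_0}K(t_0-r)|v_0(r)|\,dr\big)^2$ — plus $\tfrac12 F(s)^2\le\tfrac12\sup_{[t_0,t]}F^2$; the convolution term is bounded, via Cauchy--Schwarz and Young, by $\tfrac12 v(s)^2+\tfrac12(s-t_0)\|K\|_{L^\infty[0,t-t_0]}^2\int_{t_0}^{s}v(u)^2\,du$, which again feeds the Gr\"onwall loop; and the stochastic integral, after taking $\sup_r$ and $\E$, is controlled by Burkholder--Davis--Gundy and Young, a small multiple of $\E\sup_{[t_0,t\wedge\zeta_N]}v^2$ being absorbed into the left side — here stopping at $\zeta_N$ is essential so that every quantity is finite a priori. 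Collecting terms and applying Gr\"onwall's lemma gives, uniformly in $N$,
\begin{multline*}
\E\sup_{t_0\le r\le t\wedge\zeta_N}H(x(r),v(r)) \\ \le \Big[H(x_0(t_0),v_0(t_0))+\Big(\int_{-\infty}^{t_0}\! K(t_0-r)|v_0(r)|\,dr\Big)^2+\E\sup_{t_0\le r\le t}F(r)^2+1\Big]\,e^{c(t_0,t)} .
\end{multline*}
Letting $N\to\infty$, monotone convergence both shows $\zeta_N\to\infty$ a.s.\ (so the solution is global) and yields~\eqref{ineq:energy-estimate}; finiteness of $\int_{-\infty}^{t}K(t-r)|v(r)|\,dr$ for all $t\ge t_0$, i.e.\ $\xi\in\C(-\infty,t]$, then follows from the split, the bound on $g_{\xi_0}$, and the energy estimate. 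The detailed bookkeeping is carried out in Appendix~\ref{sec:well-posed}.

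\emph{Main obstacle.} The two delicate points are (i) obtaining a clean, $t$‑dependent control of the infinite‑past contribution $g_{\xi_0}(t)$ — precisely the role of Assumption~\ref{cond:K:supK(t+s)/K(s)}, which turns an otherwise uncontrolled tail integral into $\widetilde K(t-t_0)$ times the finite quantity built into the definition of $\C(-\infty,t_0]$ — and (ii) making the a priori estimate rigorous in the presence of both the $\sup$ inside the expectation and the nonlocal convolution term, which is what forces the localization by $\zeta_N$ and the Burkholder--Davis--Gundy/Young absorption. By contrast, the local fixed‑point construction and the uniqueness argument are routine once the memory has been split off, since on short intervals the convolution is a small perturbation and $\U'$ is locally Lipschitz.
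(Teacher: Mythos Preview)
Your argument is correct and uses the same core ingredients as the paper --- truncation of the potential, an It\^o/BDG/Gr\"onwall energy estimate on $H$, and a Gr\"onwall uniqueness argument --- but you organise the treatment of the memory differently. The paper first truncates the memory in \emph{time}, replacing $\int_{-\infty}^t$ by $\int_{-T}^t$ (equation~\eqref{eqn:GLET}), solves this finite-memory problem (Lemmas~\ref{lem:approxT}--\ref{lem:noexpT}), and then shows that the resulting solutions $\xi^T$ form a Cauchy sequence in $C([0,t];\rbb^2)$ as $T\to\infty$ (Lemma~\ref{lem:well-posed:pathwise}); Assumption~\ref{cond:K:supK(t+s)/K(s)} enters in that Cauchy step to make $\int_{-T_1}^{-T_2}K(\cdot-u)|v_0(u)|\,du$ small. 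You instead split the memory immediately into the deterministic forcing $g_{\xi_0}$ and the forward Volterra part, use Assumption~\ref{cond:K:supK(t+s)/K(s)} once to bound $g_{\xi_0}$ by $\widetilde K(t-t_0)\int_{-\infty}^{t_0}K(t_0-r)|v_0(r)|\,dr$, and work directly with the full equation. Your route is somewhat more streamlined, avoiding the extra $T\to\infty$ limit; the paper's route has the advantage that each approximating problem~\eqref{eqn:GLET} is a bona fide finite-memory SDE to which completely standard theory applies, and the weak-uniqueness claim then follows transparently by passing equality in law through the limit (see the short proof of Proposition~\ref{prop:well-posedness} at the end of Appendix~\ref{sec:well-posed}). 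Both approaches are equivalent in difficulty and rely on exactly the same two ``delicate points'' you flag.
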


The proof of Propoosition~\ref{prop:well-posedness} is given later in Appendix~\ref{sec:well-posed}.  

\begin{remark}
For a general centered stationary Gaussian process $F(t)$, it is not immediately obvious that for all $t_0<t$
\begin{align} \label{ineq:E.sup.F_t^2<infty}
  \E\sup_{t_0\le r\le t}F(r)^2<\infty.
\end{align} 
In Appendix~\ref{appendix:F}, we will make use of the condition that $K\in C^1$, cf. Assumption~\ref{cond:K:supK(t+s)/K(s)}, to show that this is indeed the case for the process $F(t)$. 
\end{remark}

\subsection{Structural assumptions on the noise}

At times, we will further assume that memory kernel $K$ has the following
specific form previously employed in~\cite{glatt2018generalized}.
\begin{assumption}\label{a:sumExpK}
There exists continuously differentiable functions $J_\ell \colon [0, \infty) \rightarrow [0,
\infty), \ell \geq 1,$ so that the stationary Gaussian forcing
$F(t)$ can be represented as
\begin{align}\label{eq:FintB}
  F(t) = \sum_{\ell = 1}^\infty \int_{-\infty}^t\close J_\ell(t-s) dB^{(\ell)}(s),
\end{align}
where $\{B^{(\ell)} : \ell \geq 1\}$ is a collection of mutually
independent standard two-sided Brownian motions. Furthermore,
\begin{align*}
 t \mapsto  \sum_{\ell = 1}^\infty \int_{0}^\infty\close J_\ell(t+r)J_\ell(r) dr ,
\end{align*}
is continuously differentiable.  
\end{assumption}
\begin{remark}
\label{rem:Kform} 
   \cref{a:sumExpK} together with the fluctuation-dissipation relation~\eqref{eq:cov} immediately imply that
   the memory kernel $K(t)$ is continuously differentiable and of the form
   \begin{align*}
     K(r) = \sum_{\ell = 1}^\infty K_\ell(t) \quad\text{where}\quad K_\ell(t)=\int_{0}^\infty\close J_\ell(t+r)J_\ell(r) dr.   
   \end{align*}
\end{remark}

We will also need some structure on the decay of the kernel at infinity.  
\begin{assumption}\label{cond:K:K(t)<t^-alpha}
There exist constants $t_*>0$, $C>0$ and $\alpha>1/2$ such that 
\begin{align*}
K(t)\le  Ct^{-\alpha} \,\,\, \text{ for all } \,\,\, t\geq t_*.  
\end{align*}
\end{assumption}
\begin{remark}
\label{rem:specialform}
When $F$ is of the form~\eqref{eq:FintB}, an example of particular interest is when $J_\ell$, $\ell \geq 1$, is given by
  \begin{align*}
    J_\ell(t) = \sqrt{2 c_\ell \lambda_\ell} e^{-\lambda_\ell t},
  \end{align*}
  where  
\begin{equation} \label{eqn:c_k}
c_\ell=\frac{1}{\ell^{1+\alpha\beta}} \qquad\text{ and }\qquad\lambda_\ell=\frac{1}{\ell^\beta},
\end{equation}
for some constants $\alpha>0, \beta >1$.  In this case, 
\begin{equation} \label{eqn:K}
K(t)=\sum_{\ell= 1}^\infty c_\ell e^{-\lambda_\ell t},
\end{equation}
and one can show that \cite[Example 3.2]{abate1999infinite}
\begin{align*}
K(t)\sim t^{-\alpha},\quad t\to\infty.
\end{align*}  
Hence, $K$ is a power-law memory kernel which clearly satisfies Assumptions~\ref{cond:K:supK(t+s)/K(s)} and \ref{cond:K:K(t)<t^-alpha}.
\end{remark}

\begin{remark}
Note that if we first suppose that $K$ is of the form~\eqref{eqn:K}, Doob's Theorem \cite{doob1942brownian} and the fluctuation-dissipation relation~\eqref{eq:cov}
together imply that $F$ must be of the form
\begin{equation} \label{eqn:F}
F(t)=\sum_{\ell= 1}^\infty \sqrt{2\lambda_\ell c_\ell}\int_{-\infty}^t \close e^{-\lambda_\ell(t-r)}dB^{(\ell)}(r),
\end{equation}
where in the above, $\{B^{(\ell)}\}_{\ell\geq 1}$ are two-sided, independent
standard Brownian motions.
\end{remark}

When \cref{a:sumExpK} holds, we arrive at the following form for the GLE 
\begin{equation} \label{eqn:GLE:original}
\begin{aligned}
d\, x(t)&=v(t)\,dt,\\
d\, v(t)&=-v(t)\,dt-\U'(x(t))\,dt- \sum_{\ell\geq
  1}\int_{-\infty}^t \close K_\ell(t-r) v(r)dr\, dt \\
&\qquad\qquad+\sum_{\ell\geq 1}\int_{-\infty}^t \close J_\ell(t-r) dB^{(\ell)}(r)\, dt +\sqrt{2}\, dW(t),
\end{aligned}
\end{equation}
where $W$ is a standard, two-sided, real-valued Brownian motion independent
of the collection $\{B^{(\ell)}\}_{\ell\geq 1}$ and $K_\ell$ is as in Remark~\ref{rem:Kform}.

\section{Structures on Pathspace}
\label{sec:pathspace}
Since we often work on the phase space $C(\rbb; \mathbb{R}^2)$ and its subspaces, we use the topology on $C(\rbb; \mathbb{R}^2)$ defined in the follow sense: A sequence $\{g_n\}\subset C(\rbb; \mathbb{R}^2)$ is said to \emph{converge} to $g\in C(\rbb;\rbb^2)$ if the convergence holds in the sup norm on any bounded time interval. That is, for all fixed $T>0$,
\begin{align*}
\sup_{t\in[-T,T]} |g_n(t)-g(t)|\to 0,\quad\text{as } n\to \infty.
\end{align*}
The closed sets in $C(\rbb; \rbb^2)$ are then defined with respect
to the above mode of convergence, hence inducing the corresponding
topology of open sets as well as the Borel sigma algebra of subsets of
$C(\rbb; \rbb^2)$. 

In the introduction, we already discussed how~\eqref{eqn:GLE} along
with its two forcings, $W$ and $F$, can be
viewed together as Markov process on the extended path space $C(\rbb;\rbb^4)$
under the shift map. However, this encodes little useful structure of the system. This is in direct contrast to the more
traditional Markovian embeddings  which hold when \eqref{eqn:Kexpr}
(with $n$ possibly infinite) is enforced as also discussed in the introduction.  In this section, we therefore discuss some intermediate, but fruitful structures used in later sections in this paper.  To aid in the discussion, we begin with a number of preliminary
discussions in simplified settings.

\subsection{The structure of solutions in simplified settings}\label{sec:simplfiedSetting}

As we have already noted, when both the general Gaussian forcing $F$
and  memory kernel $K$ are taken to be zero, \eqref{eqn:GLE} is a
standard stochastic differential equation (SDE) which generates a Markov process on $\rbb^2$. The
appearance of each of these introduces particular complications and
structures. We will first consider them individually before
exploring their combined effects.
\subsubsection*{Time inhomogeneous SDE and its skew-flow of kernels}
If only the memory kernel $K$ is taken to be zero and $F$ is a stationary Gaussian process, then the resulting equation~\eqref{eqn:GLE} is a
standard, time-homogeneous SDE.  The resulting equation, in particular, generates (provided solutions make sense) a family of solution maps $\varphi_{s,t}^{F,W}$ of~\eqref{eqn:GLE} for $(F,W) \in C(\rbb; \rbb^2)$ and $s \leq t$. The addition of $F$ does not destroy the classical
skew structure of the SDE; namely, 
\begin{align*}
\varphi_{s+r,t+r}^{F,W}
=\varphi_{s,t}^{\theta_r(F,W)} \text{ for any }r \in \rbb
\end{align*}
 where, for any
function of time $f$,  we offer the slight abuse of notation and set $(\theta_rf)(t)= f(t+r)$.

By averaging over $W$, we define a flow of Markov
kernels $R_{s,t}^F$ by
\begin{align*}
 R_{s,t}^F(\xi, A) =
\P(\varphi_{s,t}^{F,W}(\xi)\in A | \xi, F) \text{ for }A \subseteq \rbb^2
\end{align*}
and initial conditions $\xi=(x,v) \in \rbb^2$. For $s< r<t$, we have the
usual time inhomogeneous Markov property $R_{r,t}^FR_{s,r}^F=
R_{s,t}^F$. But, we also have the following skew property inherited
from the underline SDE, $$R_{s+r,t+r}^F=R_{s,t}^{\theta_r F}$$ for
$s\leq t$ and $r \in \rbb$. (See Section~\ref{sec:Skew} for more details.)

Without more information on $F$, Markovian representations of the
dynamics must include the entire future of the process $F$. This means
that the only independence of the future from the past must come from
the standard Brownian motion $W$ and not the process $F$. However, if
the process $F$ satisfies Assumption~\ref{a:sumExpK}, then there is
memory loss in $F$ and one can define Markov process with state
variables $(\xi, B^{(1)},B^{(2)},\dots)$ on the statespace
$\rbb^2 \times C( (-\infty,0] ,
\rbb)^\nbb$.  The resulting Markov Kernel $P_t$ is then defined as follows: First $\{B^{(n)}\}_{n=1}^\infty$ are extended to the time intervals
$(-\infty,t]$ by drawing independent increments of the Weiner process
of each. Next, a realization of the Weiner process $W$ is drawn on the
time $[0,t]$ starting from zero as only the increments of $W$ are used. Then the initial conditions are  evolved to time $t$ using
$\varphi_{0,t}^{\theta_r(F,W)}(\xi)$ with $F$ reconstructed from 
$( B^{(1)},B^{(2)},\dots)$ using the formula from \eqref{eqn:F}. The
resulting state $(\xi(t), B^{(1)},B^{(2)},\dots)$ is a random
variable taking values in $\rbb^2 \times C( (-\infty,t] ;\rbb)^\nbb$
and hence  $(x(t),v(t), \theta_tB^{(1)},\theta_tB^{(2)},\dots)$ takes
values again in $\rbb^2 \times C( (-\infty,0] ;\rbb)^\nbb$. The law of
this random variable is taken as the transition measure defining $P_t$
starting from this initial condition. The advantage of this
representation is that the marginals the process in $\xi=(x,v)$ again have
``Markovian feel''  of the original process.

\subsubsection*{The Gibbsian SDE and Markov process on path space}
Consider now the situation where $F\equiv 0$ and we leave the memory kernel
intact. The resulting dynamics is not a Markovian diffusion in the
classical sense. However, the resulting SDE is still a rather standard
It\^o process 
as its coefficients at time $t$  are still adapted to the past of
$W$. This particular form of  an It\^o process, considered in 
\cite{bakhtin2005stationary,weinan2002gibbsian,Mattingly11,ito1964stationary,
  Mattingly02,
  Mattingly03},
can be written abstractly as
\begin{equation}\label{eq:gibbsSDE}
  d\xi(t)= f( \theta_t \xi_{(-\infty,t]}) dt + g(\theta_t \xi_{(-\infty,t]})dW(t), 
\end{equation}
where $\theta_t$ is again the shift in time on pathspace, $\xi(t) \in \mathbb{R}^d$, $\xi_{(-\infty,t]} \in C((-\infty,t]; \mathbb{R}^d)$ , $\theta_t 
\xi_{(-\infty,t]} \in C((-\infty,0]; \mathbb{R}^d)$ and $f, g\colon 
  C((-\infty,0]; \mathbb{R}^d) \rightarrow \mathbb{R}^d$
  are the coefficients of the process. In the context of \eqref{eqn:GLE}
  with $F\equiv 0$, the dimension $d$ is 2, $f$ represents the drift terms in
  \eqref{eqn:GLE} (including the memory term), and $g$ is the constant
  $2\times2$ matrix with $g_{22}=\sqrt{2}$ and all other entries zero.

In
\cite{bakhtin2005stationary,weinan2002gibbsian,Mattingly11,Mattingly02,
Mattingly03},
this type of equation was termed Gibbsian in that it
defined a family of compatible conditional transition kernels which
depend on the entire past of the process rather than the most recent point in time as
in the Markovian setting.
This process has an infinitesimal Gibbsian generator at time $t$
given by
\begin{align*}
  L_{\xi_{(-\infty,t]}}^th(\xi(t)) = \sum_{i=1}^df_i(\xi_{(-\infty,t]} )\partial_i
  h( \xi(t)) + \frac12 \sum_{i,j=1}^d a_{ij}(\xi_{(-\infty,t]} )
  \partial_i \partial_j h(\xi(t)),
\end{align*}
for a test function $h : \rbb^d
\rightarrow \rbb$ and matrix $a=g g^T$.  This structure implies a certain amount of
independence of the future from the past, or at least a rate
of decorrelation depending on the properties of $K$.  In this case, we can define a
family of random maps $\varphi_{s,t}^W\colon C((-\infty,s];\rbb^d)
\rightarrow  C((-\infty,t];\rbb^d)$ for $s \leq t$ depending on a
random increment path of Brownian motion $W$ of length $t-s$. For
sufficiently nice $h : \rbb^d \rightarrow \rbb$, we have that
\begin{align*}
    L_{\xi_{(-\infty,t]}}^th(\xi(t))= \lim_{r \rightarrow 0^+} \frac1{r}\E \big[ h\big(\varphi^W_{t,t+r}(\xi_{(-\infty,t]})(t+r)\big) - h(\xi(t))\big]\,.
\end{align*}
By setting $P_t(\xi_{(-\infty,0]} , \;\cdot\;)$ to be the law of $\theta_t
\varphi^W_{0,t}(\xi_{(-\infty,0]})$ viewed as a random variable taking values in
$C((-\infty,0];\rbb^d)$, we can define a Markov operator on the space $C((-\infty,0];\rbb^d)$.  This Markovian representation  has more
structure than the lifting to the future and past performed in the
introduction as it encodes that the future in our context only depends on the
past.

\subsection{The Skew-flow and kernel for the full system}\label{sec:Skew}
We now combine the discussions above to provide insight into the
structure of \eqref{eqn:GLE} when both $K$ and $F$ are nonzero.  We
will reuse the symbols $\varphi$ and $R$ from the previous section but
with sightly different domains of definition needed to accomodate our
current setting with neither $F$ nor $K$ identically zero. We allow
this slight abuse of notation to make the analogies between this section
and the preliminary discussion in Section~\ref{sec:simplfiedSetting} above clearer.

As
before, we can associate to the dynamics~\eqref{eqn:GLE} a
skew product flow; however, we now must include the past of $x$ and
$v$ because of the memory in the drift. That is, given a realization of $F$ and $W$ on the
time interval $(-\infty,\infty)$ and recalling the space $\C(-\infty,t]$ as in \eqref{eqn:Cpast:int.K'(r).x(r)<infty}, we define the family maps 
\begin{align}
\varphi_{t_0,t}^{F,W}\colon \C(-\infty,t_0] \rightarrow \C(-\infty,t], \,\, t_0\leq t, 
\end{align}
 as the
extension of an initial past $\xi_0 \in  \C(-\infty,t_0]$ to a function
in $\C(-\infty,t]$ by appending to the front of $\xi_0$ the solution
\eqref{eqn:GLE} on the time interval $[t_0,t]$ with initial
past $\xi_0$ and random forcing $W$ and $F$. When $\xi_0$ is deterministic,
$\varphi_{t_0,t}^{F,W} \xi_0$ is a random path adapted to
\begin{align*}
\mathcal{F}_{t_0,t}=\sigma( F(r), W(r)-W(t_0) : r \in [t_0,t])  \,\,\,\,\, \text{with}\,\,\,\,\, (\varphi_{t_0,t}^{F,W} \xi_0)(r)=\xi_0(r) \,\, \text{ for }\,\,r\leq t_0.
\end{align*}
Observe that if $\theta_t$ again denotes the shift map in the space of
trajectories, defined by  $\theta_t f(s)=f(s+t)$, then $\theta_t \varphi_{0,t}^{F,W} \xi_0\colon \C(-\infty,0]
\rightarrow \C(-\infty,0]$. More specifically, we see that
\begin{center}
\begin{tabular}{ccccc}
$\mathcal{C}(-\infty,0]$ & $\rightarrow$ & $\mathcal{C}(-\infty,t]$ & $\rightarrow$ & $\mathcal{C}(-\infty,0]$\\
$\xi_0$ & $\mapsto$  & $\f^{F,W}_{0,t}\xi_0$ & $ \mapsto$ & $ \theta_t \f^{F,W}_{0,t}\xi_0$. 
\end{tabular}
\end{center}
\noindent So, the skew-flow $S_t$ defined by
\begin{align}\label{eqq:skewFlow}
  S_t\colon (\xi_0,F,W) \mapsto ( \theta_t \varphi_{0,t}^{F,W} \xi_0,
  \theta_t F, \theta_t W-W(t)),
\end{align}
is a random semi-flow on the space $\C(-\infty,0] \times
C(\rbb; \rbb^2)$. In particular $S_{s+t}=S_s S_t$.

Next we define the skew transition kernel $R_t^F$ on  $\C(-\infty,0]$
by taking the law of $\theta_t \varphi_{0,t}^{F,W} \xi_0$ conditioned
on $\xi_0$ and $F$; namely,
\begin{align} \label{form:R^F_t}
  R_t^F(\xi_0, A) := \P( \theta_t \varphi_{0,t}^{F,W}\xi_0 \in A \, |\, 
  \xi_0, F) ,
\end{align}
for 
\begin{equation} \label{form:S_skew}
(\xi_0,F) \in \S_{\textrm{skew}}:= \C(-\infty,0] \times C(
(-\infty,\infty);\rbb),
\end{equation}
and $A \subset \C(-\infty,0]$ Borel. Observe we
have the following skew structure stemming from \eqref{eqq:skewFlow}
\begin{align*}
 R_t^F   R_s^{\theta_t F}= R_{t+s}^F, 
\end{align*}
or more explicitly,
\begin{align*}
  R_{t+s}^F(\xi_0,A) = \int_{\C(-\infty,0]}\close R_t^F(\xi_0,d\zeta)  R_s^{\theta_t F}(\zeta,A) . 
\end{align*}

\subsection{A more Markovian kernel}\label{eq:KernalMarkov}
Looking at \eqref{eqn:GLE:original}, we see that when \cref{a:sumExpK}
is enforced, we can consider a solution to be a triple of stochastic processes $(\xi,W,B)$
where $\xi$ and $W$ are as before but $B=\{B^{(\ell)}\}_{\ell\geq 1}$ is a
countable collection of standard two-sided independent Brownian Motions independent of $W$. We can then define a
map $\psi_t^W: \S(-\infty,0]  \rightarrow \S(-\infty,t]$ where  
\begin{align} \label{form:S(-infty,t]}
\S(-\infty,t] := \C(-\infty,t] \times
C((-\infty,t];\rbb)^{\nbb},
\end{align}
and $\psi_t^W(\xi_0,B_0)$ is equal to  
the pair $(\xi,B)$ obtained by continuing the Brownian motions $B_0\in C((-\infty,0];\rbb)^{\nbb}$ over
the interval $[0,t]$ and then extending $\xi$ over the same interval
by evolving \eqref{eqn:GLE:original} using $F$ as in \eqref{eq:FintB}
with the Brownian Motions in $B$, which is the continuation of $B_0$. We again have a skew-flow defined
for $(\xi_0,B_0) \in \S(-\infty,0]$ and $W \in C((-\infty,\infty); \rbb)$
by
\begin{align*}
  (\xi_0,B_0,W) \mapsto ( \theta_t\psi_t^W(\xi_0,B_0), \theta_t W)\,.
\end{align*}
In contrast to Section~\ref{sec:Skew}, where the skew-flow $S_t$ as in~\eqref{eqq:skewFlow} is fibered over the bivariate process $(W,F)$, whose future increments depend on its entire past, this skew-flow $(\theta_t\psi_t^W(\xi_0,B_0),\theta_tW)$
is fibered over a process, namely $W$, whose future increments are independent of
its past increments. Thus, we can obtain a Markov kernel by
averaging over the randomness in $W$.  We cannot average over the
randomness in $B$ as the increment added to $\xi$ over the time
interval $[0,t]$ depends on the entire history of $B$ back to time
$-\infty$. 

With these considerations, we define the Markov kernel
$P_t$ on $\S(-\infty,0]$ by
\begin{align}\label{eq:MarkovOnC}
  P_t\big((\xi_0,B_0), A\big) = \P( \theta_t\psi_t^W(\xi_0,B_0) \in A \, |\,\xi_0,B_0  ),
\end{align}
for $(\xi_0,B_0) \in \S(-\infty,0]$ and  $A \subset  \S(-\infty,0]$
Borel.

\subsection{Solutions on the time interval $[0,\infty)$}

\cref{prop:well-posedness} gives a finite-time existence and uniqueness 
result for initial pasts in 
$\C(-\infty,0]$. Thus solutions do not blow up in finite time, but it is possible that they
may tend to $\infty$ as $t \rightarrow \infty$. Hence this fact induces a well-defined mapping
\begin{align*}
\varphi_\infty^{F,W}\colon \C(-\infty,0] \rightarrow C([0,\infty);\rbb^2),
\end{align*} 
but it is still possible that  
\begin{align*}
\P(|\varphi_\infty^{F,W}(\xi_0)(s)|
\rightarrow \infty \text{ as } s \rightarrow \infty)>0. 
\end{align*}
In the next section, we will
consider the large-time behavior of the system, in particular the existence and uniqueness of stationary solutions.  

Because the mapping $\varphi_\infty^{F,W}$ makes sense, we can define a family 
of kernels $Q^F_{[0,\infty)}$ on the  \emph{infinite future} by
  \begin{align}\label{eq:QDef}
    Q^F_{[0,\infty)}(\xi_0, A) = \P( \pi_{[0,\infty)}\varphi_\infty^{F,W}(\xi_0) \in A \,|\,
    F, \xi_0),
  \end{align}
for initial pasts  $\xi_0 \in \C(-\infty,0]$ and Borel sets $A \subset
C([0,\infty); \rbb^2)$.  Here, $\pi_{[0,\infty)}$ denotes the projection of
the trajectory onto the time interval $[0,\infty)$. While $R^F_t$ captures the effect of starting
from an initial past at time $-t$ and flowing forward to time $0$, $Q^F$
captures the distribution on the infinite future starting from $\xi_0$
at time 0.

\section{Stationary Solutions and Invariant Measures}
\label{sec:invariant-measure}
Recall that the stochastic  process $(\xi,F)$ on the time interval $(-\infty,\infty)$ is \textit{stationary}
if for any finite collection of  times $t_1,\cdots,t_n \in (-\infty, \infty)$ the
distribution of the random vector
$$\big((\xi(t_1+s),F(t_1+s)),\ldots,  (\xi(t_n+s),F(t_n+s))\big),\quad t_1<t_2<\cdots<t_n,$$ is
independent of $s \in \rbb$. Letting $\theta_t$ denote the shift mapping on the space of trajectories, cf.~\eqref{form:theta_t}, stationarity is equivalent in our setting to
the distribution of the path $\theta_t(\xi,F)$ being independent of 
$t$. 

In Sections~\ref{sec:SkewInvMeasure} and \ref{sec:MarkovInvMeasure} below, we discuss the relation of stationary solutions and invariant measures for the skew-kernel $R^F_t$ and $P_t$ defined in~\eqref{form:R^F_t} and~\eqref{eq:MarkovOnC}, respectively.

\subsection{For the skew kernel $R_t^F$}\label{sec:SkewInvMeasure} 

Recalling the skew transition kernel $R^F_t$ on $\C(-\infty,0]$ as in~\eqref{form:R^F_t}, a family of measures $\mu^F$ on Borel subsets of $\C(-\infty,0]$, indexed by a realization $F \in C((-\infty,\infty);\rbb)$, is called \emph{skew-invariant} for $R^F_t$ if the following holds
\begin{align*}
  \mu^F R_t^F = \mu^{\theta_t F}, 
\end{align*}
where we define the measure  $\mu^F R_t^F$ for $A\subset \C(-\infty, 0]$ Borel by
\begin{align}
  \mu^F R_t^F(A)& = \int_{\C(-\infty,0]}\close\close R_t^F(\xi_0,A)  \mu^F(d\xi_0)\notag\\
  &=\int_{\C(-\infty,0]}\close\close\P( \theta_t \varphi_{0,t}^{F,W}\xi_0 \in A \, |\, 
  \xi_0, F) \mu^F(d\xi_0)\,. \label{eqn:skew-inv}
\end{align}

We note that a stationary solution $(\xi,F)$ on the time interval
$(-\infty,\infty)$ always generates a skew-invariant measure
$\mu^F$. To see this, let $\Law(\xi,F)$ denote the law of the stationary solution $(\xi,F)$. Then, Law($\xi$), being the disintegration of $\Law(\xi,F)$ relative to $\Law(F)$ restricted
to $\C(-\infty,0]$, is the desired skew-invariant $\mu^F$. Indeed, from~\eqref{eqn:skew-inv}, observe that $\mu^FR^F_t$ is the law of $\theta_t \xi$, which agrees with Law($\xi$) by stationarity.

On the other hand, given a skew-invariant measure
$\mu^F$ on $\C(-\infty,0]$, let $\tilde \mu^F$ be the extension of
$\mu^F$ to the time interval $(-\infty,\infty)$ using the dynamics
$\varphi_{0,t}^{F,W}$. That is, for any Borel set $A\in \C(-\infty,t]$,
\begin{align*}
\tilde{\mu}^F(A)=\int_{\C(-\infty,0])}\close\close\P(  \varphi_{0,t}^{F,W}\xi_0 \in A \, |\, 
  \xi_0, F) \mu^F(d\xi_0)\,.
\end{align*}
Then $\tilde \mu^F (d\xi) \Law(F)(df) $ is the law of the
desired stationary process $(\xi,F)$. To see the stationarity of $\xi$, for $A_1,\dots,A_N\subset\rbb^2$,
\begin{align}
&\tilde{\mu}^F\big(\big\{ \xi(t_1+s)\in A_1,\dots, \xi(t_n+s)\in A_n   \big\}\big)\notag\\
&=\int_{\C(-\infty,0])}\close\close\P(  \varphi_{0,t_n+s}^{F,W}\xi_0(t_1+s)\in A_1,\dots, \varphi_{0,t_n+s}^{F,W}\xi_0(t_n+s)\in A_n \, |\, 
  \xi_0, F) \mu^F(d\xi_0)\notag\\
  &= \int_{\C(-\infty,0])}\close\close\P(  \theta_{t_n+s}\varphi_{0,t_n+s}^{F,W}\xi_0(t_1-t_n)\in A_1,\dots, \theta_{t_n+s}\varphi_{0,t_n+s}^{F,W}\xi_0(0)\in A_n \, |\, 
  \xi_0, F) \mu^F(d\xi_0)\notag\\
  &=\mu^F\big(\big\{ \xi'(t_1-t_n)\in A_1,\dots, \xi'(0)\in A_n   \big\}\big)\notag\\
  &= \int_{\C(-\infty,0])}\close\close\P(  \theta_{t_n}\varphi_{0,t_n}^{F,W}\xi_0(t_1-t_n)\in A_1,\dots, \theta_{t_n}\varphi_{0,t_n}^{F,W}\xi_0(0)\in A_n \, |\, 
  \xi_0, F) \mu^F(d\xi_0)\notag\\
  &=\int_{\C(-\infty,0])}\close\close\P(  \varphi_{0,t_n}^{F,W}\xi_0(t_1)\in A_1,\dots, \varphi_{0,t_n}^{F,W}\xi_0(t_n+s)\in A_n \, |\, 
  \xi_0, F) \mu^F(d\xi_0)\notag\\
  &=\tilde{\mu}^F\big(\big\{ \xi(t_1)\in A_1,\dots, \xi(t_n)\in A_n   \big\}\big). \label{eqn:skew-inv:mutilde}
\end{align}
In the third and fourth implications above, we invoked the stationarity of $\mu^F$.

%\textred{
%A stationary solution $(\xi,F)$ on the time interval
%$(-\infty,\infty)$ always generates a skew-invariant measure
%$\mu^F$ , which is invariant for the kernel $R^F_t$ and is indexed by a realization $F \in C((-\infty,\infty);\rbb)$
%\begin{align*}
%  \mu^F R_t^F = \mu^{\theta_t F},
%\end{align*}
%where we define the measure  $\mu^F R_t^F$ for $A\subset \C(-\infty, 0]$ Borel by
%\begin{align*}
%  \mu^F R_t^F(A) = \int_{\C(-\infty,0]}\close R_t^F(\xi,A)  \mu^F(d\xi)\,.
%\end{align*}
%Let $\Law(\xi,F)$ denote the law of stationary solution $(\xi,F)$ on the space $\S_{\text{skew}}$.
%The disintegration of $\Law(\xi,F)$ relative to  $\Law(F)$, restricted
%to $\C(-\infty,0]$, is the
%desired skew-invariant $\mu^F$. Here, the skew invariance follows from the
%stationarity of $(\xi,F)$.  On the other hand, given a skew-invariant measure
%$\mu^F$ on $\C(-\infty,0]$, let $\tilde \mu^F$ be the extension of
%$\mu^F$ to the time interval $(-\infty,\infty)$ using the dynamics
%$\varphi_{0,t}^{F,W}$. Then $\tilde \mu^F (d\xi) \Law(F)(df) $ is the law of the
%desired stationary process $(\xi,F)$.}

\subsection{For the Markov kernel $P_t$}\label{sec:MarkovInvMeasure}

When \cref{a:sumExpK} holds, recall that $(\xi,B)$ evolves as a Markov
process on the state space
\begin{equation*} 
\S(-\infty,0] :=\C(-\infty,0] \times C((-\infty,0];\rbb)^{\nbb},
\end{equation*}
under the Markov kernel $P_t$ defined in \eqref{eq:MarkovOnC}. In this
setting, there
is a one-to-one correspondence between stationary solutions on the
time interval $(-\infty,\infty)$ and invariant probability measures $\mu$ for $P_t$.

Given an invariant probability
measure $\mu$ for $P_t$ on  $\S(-\infty,0]$, we can create a
stationary measure $\tilde \mu$ on the interval $(-\infty,\infty)$ by flowing the
dynamics forward from $\mu$ by the map $\psi_t^W$ defined in
Section~\ref{eq:KernalMarkov} from a random initial past distributed
according to $\mu$ and then taking the measure obtained on
$(-\infty,\infty)$ by averaging over the realization of $W$. That is, for any Borel set $A\in \S(-\infty,t]$,
\begin{align*}
\tilde{\mu}(A)=\int_{\S(-\infty,0]}\close\close\P(  \psi_{t}^{W}(\xi_0,B_0) \in A \, |\, 
  \xi_0, B_0) \mu(d\xi_0,d B_0)\,.
\end{align*}
The argument for $\tilde{\mu}$ being stationary is analogous to~\eqref{eqn:skew-inv:mutilde}.

Conversely, given a stationary solution $\tilde{\mu}$ on $\S
(-\infty,\infty)$ then we can simply restrict the distribution to a
measure $\mu$ on $\S(-\infty,0]$. For any Borel set $A\in \S(-\infty,0]$, observe that $\theta_{-t}A\in \S(-\infty,t]$. Letting $\pi_T\xi$ denote the projection of $\xi$ onto the interval $T\subset(-\infty,\infty)$, we have
\begin{align*}
\mu P_t(A)&=\int_{\S(-\infty,0]}\close\close\P(  \theta_t\psi_{t}^{W}(\xi_0,B_0) \in A \, |\, 
  \xi_0, B_0) \mu(d\xi_0,d B_0)\\
  &= \int_{\S(-\infty,0]}\close\close\P(  \psi_{t}^{W}(\xi_0,B_0) \in \theta_{-t}A \, |\, 
  \xi_0, B_0) \mu(d\xi_0,d B_0)\\
  &=\tilde{\mu}(\{\xi:\pi_{(-\infty,t]}\xi\in \theta_{-t} A\})\\
  &= \tilde{\mu}( \{\xi:\pi_{(-\infty,0]}\theta_t\xi\in A   \})\\
  &= \tilde{\mu}( \{\xi:\pi_{(-\infty,0]}\xi\in A   \})=\mu(A)\,.
\end{align*}
In the second to last implication above, we employed the stationarity of $\tilde{\mu}$. We therefore see that the resulting measure $\mu$ is invariant
for the Markov Kernel $P_t$. 

\subsection{Existence and uniqueness of stationary measures}
Recalling the space $\C(-\infty, t]$ defined in~\eqref{eqn:Cpast:int.K'(r).x(r)<infty}, for $\varrho >0$ we introduce the following subset of \emph{moderate growth}:
\begin{align} \label{eqn:CpastR^2}
\C_\varrho(-\infty,t]=\bigg\{( x,v) \in \C(-\infty,t] : \sup_{r\leq t}\frac{|x(r)|}{1+|r|^\varrho}<\infty\bigg\}  ,
\end{align}
and define
\begin{align*}
  \C_\varrho(-\infty,\infty) = \bigcup_{n \in \Z, n \geq 0}\C_\varrho(-\infty,n].
\end{align*}

Our main result concerning the existence of an invariant measure for the Markov kernel $P_t$ is the following theorem whose proof is deferred to Section~\ref{sec:existence}.
\begin{theorem} \label{thm:existence}
Suppose that $\U$ satisfies Assumption~\ref{cond:Phi} and that Assumption~\ref{a:sumExpK} is satisfied with the choice of $J_\ell$ as in Remark~\ref{rem:specialform}.  Then there exists an invariant measure $\mu_*$ for $P_t$ defined in \eqref{eq:MarkovOnC}. Moreover, for every $\varrho>0$,
\begin{equation} \label{eqn:mu(C_rho)=1)}
\mu_*(\C_\varrho(-\infty,0])=1.
\end{equation}

\end{theorem}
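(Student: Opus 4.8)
The plan is to construct the invariant measure $\mu_*$ as a Krylov--Bogolyubov limit of time-averaged measures, using the energy estimate of Proposition~\ref{prop:well-posedness} to obtain tightness, and then to verify the moderate-growth estimate \eqref{eqn:mu(C_rho)=1)} separately. Concretely, I would first recall that in the setting of Assumption~\ref{a:sumExpK} with $J_\ell$ as in Remark~\ref{rem:specialform}, the work \cite{glatt2018generalized} produces an explicit invariant probability measure for the Markovian system \eqref{eqn:GLE-Markov:xvz} on the Sobolev-like extended state space whose $(x,v)$-marginal is the Gibbs measure $\pi$ of \eqref{eqn:marginal-x-v}. The key observation is that this Markov process, when run in stationarity, yields a stationary solution $(\xi,B)$ of \eqref{eqn:GLE:original} on $(-\infty,\infty)$; restricting its law to the time interval $(-\infty,0]$ and using the correspondence developed in Section~\ref{sec:MarkovInvMeasure} between stationary solutions and invariant measures of $P_t$ then gives a candidate invariant measure $\mu_*$ for $P_t$ on $\S(-\infty,0]$. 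I would make sure the auxiliary-variable representation used in \cite{glatt2018generalized} is matched up with the Brownian-motion representation $B=\{B^{(\ell)}\}$ used here, e.g. via the explicit formula \eqref{eqn:F} and the Doob-type identification, so that the stationary auxiliary process indeed corresponds to a stationary collection of two-sided Brownian increments feeding $F$.

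Having produced $\mu_*$ with $(x,v)$-marginal (at a fixed time) equal to $\pi$, the second and main task is \eqref{eqn:mu(C_rho)=1)}: showing that, $\mu_*$-almost surely, the whole past trajectory $(x,v) \in \C_\varrho(-\infty,0]$ for every $\varrho > 0$, i.e. $\sup_{r\le 0}|x(r)|/(1+|r|^\varrho)<\infty$ and $\int_{-\infty}^0 K(-r)|v(r)|\,dr<\infty$. For the growth of $x$, I would use stationarity to reduce to a quantitative tail bound: since the one-time marginal of $H(x(r),v(r))$ is $\pi$, which has exponential moments by Assumption~\ref{cond:Phi} (the coercivity $b(\U(x)+1)\ge |x|^{1+\delta}$ gives $\E_\pi e^{c|x|^{1+\delta}}<\infty$ for small $c$), a Borel--Cantelli argument over integer time windows $[-n-1,-n]$ combined with the sup-over-finite-interval energy estimate \eqref{ineq:energy-estimate} controls $\sup_{r\in[-n-1,-n]}|x(r)|$ by something growing like $(\log n)^{1/(1+\delta)}$, which is $o(|r|^\varrho)$ for any $\varrho>0$; summability of the exceptional probabilities gives the almost-sure bound. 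For the integrability $\int_{-\infty}^0 K(-r)|v(r)|\,dr<\infty$: by Tonelli and stationarity, $\E_{\mu_*}\int_{-\infty}^0 K(-r)|v(r)|\,dr = \big(\E_\pi |v|\big)\int_0^\infty K(s)\,ds$, and since $\E_\pi|v|<\infty$ and $K$ decays like $t^{-\alpha}$ with $\alpha>1/2$—wait, this needs $\alpha>1$, so instead I would argue that $K$ is \emph{locally integrable} and that the relevant space $\C(-\infty,0]$ only requires finiteness of this integral for the specific stationary trajectory; more carefully, one splits $\int_{-\infty}^0 = \int_{-1}^0 + \sum_{n\ge 1}\int_{-n-1}^{-n}$, bounds the $n$-th term by $\widetilde K$-type comparison (Assumption~\ref{cond:K:supK(t+s)/K(s)}) times $\sup_{[-n-1,-n]}|v|$, and uses the same Borel--Cantelli control of $\sup_{[-n-1,-n]}|v|$ by $O((\log n)^{1/2})$ together with whatever summable decay of $K(n)$ is available from Assumption~\ref{a:sumExpK}; in fact membership in $\C(-\infty,0]$ is already guaranteed $\mu_*$-a.s. because $\mu_*$ is supported on $\S(-\infty,0]$ by construction, so the real content is only the moderate-growth refinement to $\C_\varrho$.

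The main obstacle I anticipate is the matching-up step: verifying rigorously that the stationary Markov process constructed in \cite{glatt2018generalized} on its particular state space descends to a genuine invariant probability measure for the present kernel $P_t$ on $\S(-\infty,0]=\C(-\infty,0]\times C((-\infty,0];\rbb)^{\nbb}$, in particular that the stationary law is actually \emph{supported} on $\C(-\infty,0]$ (so that the convolution term in \eqref{eqn:GLE} is almost surely well-defined along the whole past) and not merely on $C((-\infty,0];\rbb^2)$. This amounts to an a priori estimate showing $\E\int_{-\infty}^0 K(-r)|v(r)|\,dr<\infty$ under the stationary measure, which I would extract from the explicit form of the invariant measure in \cite{glatt2018generalized} combined with the decay hypothesis on $K$; once that is in hand, the Krylov--Bogolyubov/limiting argument is not even needed—stationarity is automatic—and only the elementary Borel--Cantelli estimates for \eqref{eqn:mu(C_rho)=1)} remain. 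A secondary technical point is to ensure the energy estimate \eqref{ineq:energy-estimate}, which is stated for deterministic initial pasts, can be integrated against $\mu_*$; this follows by conditioning on the initial past and using that $\mu_*$ gives finite expectation to each term on the right-hand side of \eqref{ineq:energy-estimate}, which is again where the support property and the exponential moments of $\pi$ enter.
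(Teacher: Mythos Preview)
Your overall strategy coincides with the paper's: pull back the explicit invariant measure from \cite{glatt2018generalized} for the Markovian system on $\H_{-s}$, push it to path space to obtain $\mu_*$, and then run a Borel--Cantelli argument over unit windows $[-n-1,-n]$, using stationarity to reduce to a single window $[0,1]$. The construction of $\mu_*$ is essentially as you outline (no Krylov--Bogolyubov is needed).

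The gap is in the Borel--Cantelli step. You propose to control $\sup_{r\in[0,1]}|x(r)|$ via the energy estimate \eqref{ineq:energy-estimate}. But \eqref{ineq:energy-estimate} is only a \emph{first-moment} bound on $\sup_{[0,1]}H$; combined with $H\gtrsim |x|^{1+\delta}$ and Markov's inequality it yields at best
\[
\P\Big(\sup_{0\le r\le 1}|x(r)|>(n+1)^\varrho\Big)\le C(n+1)^{-(1+\delta)\varrho},
\]
which is summable only for $\varrho>1/(1+\delta)$, not for every $\varrho>0$. The $(\log n)^{1/(1+\delta)}$ growth you claim would require \emph{exponential} tails for $\sup_{[0,1]}H$, and the exponential moments of the one-time marginal $\pi$ do not by themselves propagate to the supremum over an interval; \eqref{ineq:energy-estimate} does not supply this. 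A secondary issue is that integrating the right-hand side of \eqref{ineq:energy-estimate} against $\mu_*$ requires $\E_{\mu_*}(\int_{-\infty}^0 K(-r)|v_0(r)|\,dr)^2<\infty$, which for $\alpha\le 1$ is not immediate and, as you note, risks circularity.

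The paper avoids both problems by abandoning \eqref{ineq:energy-estimate} and instead applying It\^o's formula to $H(\xi)$ along the \emph{Markovian} system \eqref{eqn:GLE-Markov:xvz} on $\H_{-s}$ (no memory integral appears there). The cross terms $\sqrt{c_\ell}z_\ell v$ are absorbed using Young's inequality and the $\|\cdot\|_{-s}$ weights, yielding an inequality for $\sup_{[0,1]}H$ in terms of $U(x(0))$, $v(0)$, the $z_\ell(0)$, and a martingale term. The point is that the invariant measure $\mu=\pi\times\prod_\ell N(0,1)$ is explicit, so \emph{all} $p$-th moments of these quantities are directly computable (Burkholder handles the martingale). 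Chebyshev with arbitrary $p$ then gives
\[
\P\Big(\sup_{0\le r\le 1}|x(r)|>(n+1)^\varrho\Big)\le C(n+1)^{-(1+\delta)\varrho p},
\]
and one simply takes $p$ large enough that $(1+\delta)\varrho p>1$. That freedom in $p$, coming from the explicit Gaussian/Gibbs structure of the Markovian invariant measure rather than from the GLE energy estimate, is the missing ingredient in your proposal.
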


\begin{remark}
The proof of Theorem~\ref{thm:existence} relies on constructing an explicit invariant measure for an infinite-dimensional auxiliary Markovian system. A good Lyapunov-type estimate for the equation~\eqref{eqn:GLE} which would ensure the abstract existence of such a measure in more generality is currently unavailable. It is thus left as an open problem to determine whether~\eqref{eqn:mu(C_rho)=1)} always holds for any invariant measure $\mu$.
\end{remark}

The following is our uniqueness result which pairs with the existence
result given in \cref{thm:existence}. However, it is worth noting that
the uniqueness result applies in many settings where we do not know that there
exists a stationary measure. 

\begin{theorem} \label{thm:unique}
Suppose that $\U$ satisfies Assumption~\ref{cond:Phi} and that the memory kernel $K$ satisfies Assumptions~\ref{cond:K:supK(t+s)/K(s)} and~\ref{cond:K:K(t)<t^-alpha}. For every $\varrho<\alpha-1/2$, the skew dynamics $S_t$ admits at most one stationary solution $(\xi,F)$ such that $\Law(\xi\,|\,F)$ is supported in $\C_\varrho(-\infty,\infty)$.
\end{theorem}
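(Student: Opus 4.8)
The plan is to prove uniqueness by a coupling argument: suppose $(\xi^{(1)},F)$ and $(\xi^{(2)},F)$ are two stationary solutions driven by the \emph{same} realization of the Gaussian forcing $F$ (and the same $W$), with $\Law(\xi^{(i)}\,|\,F)$ supported in $\C_\varrho(-\infty,\infty)$ for some $\varrho<\alpha-1/2$. Since the noise $W$ and $F$ are shared, the difference $\delta(t)=\xi^{(1)}(t)-\xi^{(2)}(t)=(y(t),u(t))$ with $y=x^{(1)}-x^{(2)}$, $u=v^{(1)}-v^{(2)}$ satisfies a \emph{deterministic}, closed equation with no stochastic forcing:
\begin{align*}
\dot y(t)&=u(t),\\
\dot u(t)&=-u(t)-\big(\U'(x^{(1)}(t))-\U'(x^{(2)}(t))\big)-\int_{-\infty}^t K(t-r)u(r)\,dr.
\end{align*}
The goal is to show $\delta\equiv 0$, which forces $\xi^{(1)}=\xi^{(2)}$ and hence (after integrating $F$ against $\Law(F)$) that the stationary solution is unique. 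The first step is to make rigorous that two stationary solutions can be realized on a common probability space with the same $(F,W)$; this is standard given weak uniqueness from \cref{prop:well-posedness} once we pair the marginals appropriately, but it should be stated carefully. Note that by the energy estimate \eqref{ineq:energy-estimate} and stationarity, $\E H(x^{(i)}(t),v^{(i)}(t))$ is finite and $t$-independent, and the moderate-growth support assumption controls $\sup_{r\le t}|x^{(i)}(r)|/(1+|r|^\varrho)$.

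The heart of the argument is an energy/Lyapunov estimate for $\delta$ run \emph{backward from time $0$ to time $-T$} combined with the stationarity, which lets $T\to\infty$. Introduce the quadratic-type energy $E(t)=\tfrac12 u(t)^2+\tfrac12(\text{memory terms})$; the natural quantity here is
\[
\mathcal E(t)=\tfrac12 u(t)^2+\tfrac12\int_0^\infty\int_0^\infty K\!\left(|a-b|\right)\,\text{(history of }u)\ldots,
\]
but since the clean fluctuation–dissipation structure giving positivity of the memory term may be unavailable for general $K$ (we only have \cref{cond:K:supK(t+s)/K(s)} and the decay \cref{cond:K:K(t)<t^-alpha}), I expect the actual route is: multiply the $u$-equation by $u(t)$, integrate over $[-T,0]$, and bound the convolution term crudely using $\int K(t-r)|u(r)|\,dr$ and Young's/Cauchy–Schwarz inequalities, absorbing the $\U'$-difference term via $|\U'(x^{(1)})-\U'(x^{(2)})|\le (\sup|\U''|\text{ on the relevant compact})\,|y|$ — and here one must use the moderate growth and the $C^3$ bound on $\U$. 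The key point is that the dissipative $-u(t)^2$ term must dominate; the memory term contributes something controlled by a constant times $\|\wt K\|$ times an integral of $u^2$, but crucially with a small coefficient when $\alpha>1/2$ so that $K\in L^1$-ish tail behavior makes the $L^1$ norm of $K$ over $[t_*,\infty)$ the governing quantity (this is precisely where $\alpha>1/2$, equivalently $\varrho<\alpha-1/2$, enters). The auxiliary results of Section~\ref{sec:auxil-result} are presumably exactly the estimates quantifying $\int_{-\infty}^0 K(-r)|u(r)|\,dr$ against $\sup|u|$ and the growth exponent $\varrho$.

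Putting it together: one obtains an inequality of the form
\[
\mathcal E(0)+c_1\int_{-T}^0 u(t)^2\,dt\le \mathcal E(-T)+c_2\int_{-T}^0(\text{small})\,u(t)^2\,dt+(\text{boundary/memory correction from before }-T),
\]
with $c_2<c_1$ thanks to $\alpha>1/2$, so that $\int_{-\infty}^0 u(t)^2\,dt<\infty$ uniformly, hence $\E\mathcal E(-T)$ cannot stay bounded below by a positive constant unless $u\equiv 0$; by stationarity $\E\mathcal E(-T)=\E\mathcal E(0)$ is constant, forcing $u\equiv 0$ a.s., then $y\equiv\text{const}$, and then the $\U'$ term kills the constant (or moderate growth plus the structure of $\U$ does), giving $\delta\equiv 0$. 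The main obstacle, I expect, is controlling the memory/convolution term without the positive-definiteness that the sum-of-exponentials structure would give: one has to show that, under only \cref{cond:K:supK(t+s)/K(s)} and the polynomial decay $\alpha>1/2$, the tail of $K$ is integrable enough and the near-diagonal part is handled by the $C^1$ regularity, so that the bad contribution genuinely has a coefficient strictly smaller than the friction coefficient $1$. Secondarily, justifying the backward-in-time integration by parts and the use of stationarity to control the $t=-T$ boundary terms (ensuring they do not blow up, using the energy estimate and the moderate-growth support) will require care; the threshold $\varrho<\alpha-1/2$ is what makes those boundary terms vanish in the limit.
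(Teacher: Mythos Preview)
Your proposal attempts a pathwise contraction argument that is fundamentally different from the paper's proof and, as written, contains genuine gaps.

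First, the coupling step is not just a technicality. Two distinct stationary solutions are a priori measures on path space; even after realizing both with the same $F$ and the same $W$, their \emph{initial pasts} $\xi^{(1)}_{(-\infty,0]}$ and $\xi^{(2)}_{(-\infty,0]}$ are drawn from different laws and will typically differ. The difference $\delta$ then satisfies your deterministic equation with a nonzero initial past, and there is no a priori reason $\delta\equiv 0$. Your argument must therefore establish some form of asymptotic contraction, not merely identify a closed equation.

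Second, the energy inequality you sketch does not close. The potential term $|\U'(x^{(1)})-\U'(x^{(2)})|\le (\sup|\U''|)\,|y|$ requires a bound on $\U''$ along the trajectories, but under Assumption~\ref{cond:Phi} the potential is only required to grow like $|x|^{1+\delta}$ and $\U''$ is in general unbounded; stationarity gives moments of $H$, not confinement to a compact set. Separately, your claim that $\alpha>1/2$ forces the memory contribution to have a coefficient strictly smaller than the friction coefficient $1$ is incorrect: $\alpha>1/2$ gives at best $K\in L^2$ near infinity, not $K\in L^1$, and in any case $K(0)$ and $\|K\|_{L^1}$ (when finite) can be arbitrarily large relative to the friction. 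There is no smallness to exploit.

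The paper's argument is not a contraction estimate at all. It relies on two measure-theoretic ingredients (Propositions~\ref{lem:Lebesgue-equivalent} and~\ref{lem:Q_infinity:equivalent}): the marginal of any stationary law at a fixed time is equivalent to Lebesgue measure on $\rbb^2$, and two initial pasts in $\C_\varrho(-\infty,0]$ that \emph{agree at time $0$} produce equivalent future laws $Q^F_{[0,\infty)}$. The latter is proved via Girsanov, and the condition $\varrho<\alpha-1/2$ enters precisely in verifying Novikov's condition for the drift shift $\int_{-\infty}^0 K'(t-r)\xbar_0(r)\,dr$; this is where the decay of $K$ is used, not to make any operator norm small. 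Uniqueness then follows from Birkhoff's ergodic theorem: ergodic averages along a.e.\ trajectory determine the law, and the two equivalences above let one find, for two ergodic stationary laws, trajectories that coincide on $[0,\infty)$.
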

The following corollary is an immediate result of the
\cref{thm:unique} when we are in the Markovian setting discussed in
\cref{eq:KernalMarkov} and \cref{sec:MarkovInvMeasure}.
\begin{corollary}
When \cref{a:sumExpK} holds in addition to the assumptions of
\cref{thm:unique}, there exists at most one invariant measure
supported on  $$\S_\varrho(-\infty,0] :=\{(\xi,B) \in \S(-\infty,0]: \xi
\in \C_\varrho(-\infty,0] \},$$ for the Markov semigroup on that
space discussed in \cref{eq:KernalMarkov}. 
\end{corollary}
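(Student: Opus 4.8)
The plan is to read the Corollary off from \cref{thm:unique} through the correspondence of \cref{sec:MarkovInvMeasure} between invariant measures of $P_t$ and stationary solutions, the only genuine point being that, under \cref{a:sumExpK}, the auxiliary Brownian motions $B=\{B^{(\ell)}\}_{\ell\ge 1}$ enter the dynamics \eqref{eqn:GLE:original} only through the Gaussian forcing $F(t)=\sum_{\ell\ge1}\int_{-\infty}^{t}J_\ell(t-s)\,dB^{(\ell)}(s)$, so that a stationary solution in the $(\xi,F)$ variables lifts in a law-unique way to one in the $(\xi,B)$ variables.

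Concretely, I would argue as follows. Let $\mu_1,\mu_2$ be invariant measures for $P_t$ supported on $\S_\varrho(-\infty,0]$. Using \cref{sec:MarkovInvMeasure}, extend each $\mu_i$ to a stationary solution $\tilde\mu_i$ of the $(\xi,B)$-system on $(-\infty,\infty)$, together with its driving Brownian motion $W$. By the construction via $\psi^W$, the process $B$ restricted to $[0,\infty)$ is a genuine collection of independent Brownian increments, so by stationarity $B$ is, under each $\tilde\mu_i$, a collection of independent two-sided Brownian motions independent of $W$, with a law not depending on $i$. Setting $F$ as above reconstructed from this $B$, \cref{rem:Kform} together with \cref{a:sumExpK} shows that $F$ is a mean-zero stationary Gaussian process with covariance $K$, and that $(\xi,F,W)$ is a weak solution of \eqref{eqn:GLE} in the sense of \cref{def:infiniteSolution}; hence $(\xi,F)$ is a stationary solution of the skew dynamics $S_t$. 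Since $\mu_i$ is supported on $\S_\varrho(-\infty,0]$, stationarity forces $\xi\in\C_\varrho(-\infty,\infty)$ almost surely (the growth and integrability conditions defining $\C_\varrho$ being stable under the time shift up to harmless constants), so $\Law(\xi\,|\,F)$ is supported in $\C_\varrho(-\infty,\infty)$. As $\varrho<\alpha-1/2$ and the remaining hypotheses of \cref{thm:unique} are in force, \cref{thm:unique} yields that the law of $(\xi,F)$ under $\tilde\mu_1$ equals that under $\tilde\mu_2$.

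It remains to promote this to $\mu_1=\mu_2$, i.e.\ to equality of the laws of $(\xi_0,B_0):=(\xi,B)\big|_{(-\infty,0]}$. Writing $F_0=\Phi_0(B_0)$ for the linear functional giving $F\big|_{(-\infty,0]}$ in terms of $B_0$, the crux is that, under $\tilde\mu_i$, $\xi_0$ is conditionally independent of $B_0$ given $F_0$: the strong well-posedness of \cref{prop:well-posedness} together with the loss of memory underlying \cref{thm:unique} makes the stationary $\xi$ a measurable functional of the driving pair $(F,W)$, so $\xi_0$ is $\sigma(F_0,W_0)$-measurable; since $W_0$ is independent of $B_0$ and stays so after conditioning on $F_0$ (a function of $B_0$), conditioning additionally on the $\sigma(W_0)$-measurable quantity $\xi_0$ does not change the conditional law of $B_0$ given $F_0$. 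Consequently, under $\mu_i$ the law of $(\xi_0,B_0)$ is obtained by integrating, over $b$ drawn from the common Brownian-past law of $B_0$, the joint law $\delta_b\otimes\Law(\xi_0\,|\,F_0=\Phi_0(b))$; both ingredients coincide for $i=1,2$ by the previous paragraph, whence $\mu_1=\mu_2$.

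The main obstacle is precisely this last step: \cref{thm:unique} controls only the joint law of $(\xi,F)$, whereas the Markov state additionally records $B$, which carries strictly more information than $F$. The conditional-independence identity bridges the gap, and its single non-formal ingredient — that a stationary $\xi$ is a functional of its driving noise $(F,W)$ — is in essence the contraction statement established in the proof of \cref{thm:unique}, and should be quoted from there rather than reproved. Everything else (the extension $\mu_i\mapsto\tilde\mu_i$, the Gaussian bookkeeping for $F$, and the shift-stability of $\C_\varrho$) is routine.
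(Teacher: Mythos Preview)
The paper gives no proof of this corollary beyond the single sentence ``an immediate result of \cref{thm:unique} when we are in the Markovian setting discussed in \cref{eq:KernalMarkov} and \cref{sec:MarkovInvMeasure}.''  So any comparison is really between your argument and the implicit one the authors had in mind.

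You are right to flag the subtlety: \cref{thm:unique} pins down $\Law(\xi,F)$, while the Markov state carries $(\xi,B)$, and $B$ contains strictly more information than $F$.  The first two-thirds of your sketch (pass from $\mu_i$ to a stationary $(\xi,B,W)$, read off a stationary $(\xi,F)$ in the sense of \cref{def:infiniteSolution}, and invoke \cref{thm:unique}) is correct and is presumably what the authors regard as routine.

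The gap is in your last step.  Your conditional-independence argument rests on the assertion that the stationary $\xi$ is $\sigma(F,W)$-measurable, which you attribute to ``the contraction statement established in the proof of \cref{thm:unique}.''  But the proof of \cref{thm:unique} does not establish any contraction or one-force-one-solution principle: it proceeds via ergodic decomposition, Birkhoff's theorem, and the mutual absolute continuity of \cref{lem:Lebesgue-equivalent} and \cref{lem:Q_infinity:equivalent}.  Uniqueness in law of a stationary solution does not, by itself, force that solution to be a measurable functional of the driving noise, so the conditional independence of $\xi_0$ and $B_0$ given $F_0$ is not justified by what you cite.

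The clean fix is not to apply the statement of \cref{thm:unique} and then lift, but to rerun its proof with the disintegration taken over $B$ rather than $F$.  This works because the forward $\xi$-dynamics depends on $B$ only through $F(B)$: given any initial past $\xi_0$ and realization of $B$, one has $Q^{F(B)}_{[0,\infty)}(\xi_0,\,\cdot\,)$ as the conditional future law, so \cref{lem:Lebesgue-equivalent} and \cref{lem:Q_infinity:equivalent} carry over verbatim with $B$ in place of $F$.  The Birkhoff argument then yields equality of ergodic averages for the two $(\xi,B)$-stationary laws directly, without ever needing to know that $\xi$ is a functional of the noise.  This is almost certainly the ``immediate'' argument the paper intends.
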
 

The proof of \cref{thm:unique} makes use of a coupling argument employed
in~\cite{bakhtin2005stationary,Mattingly15,Mattingly11,
Mattingly02,Mattingly03,
weinan2002gibbsian,weinan2001gibbsian}
to show that starting from two distinct initial history paths, the time averages of their solutions in the future must converge to the same place, hence yielding uniqueness of a given stationary measure.  Two of the main ingredients in the coupling argument are the following two results to be proved in the next section.

\begin{proposition} \label{lem:Lebesgue-equivalent}  Under the hypotheses of Theorem~\ref{thm:unique}, for any
  stationary solution $(\xi,F)$ of~\eqref{eqn:GLE}, the marginal of $\Law(\xi \,|\, F)$ at any fixed
  time $t$ is equivalent to Lebesgue measure on $\rbb^2$. \end{proposition}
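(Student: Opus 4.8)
The plan is to show that the conditional law $\Law(\xi(t)\,|\,F)$, which at any fixed time is the projection at time $t$ of the skew-invariant family $\mu^F$, has a density with respect to Lebesgue measure that is strictly positive a.e. on $\rbb^2$. The key point is that, conditional on $F$, the process $(x,v)$ solves the It\^o SDE
\begin{equation*}
d\,x(t)=v(t)\,dt,\qquad d\,v(t)=-v(t)\,dt-\U'(x(t))\,dt-\bigg(\int_{-\infty}^t\close K(t-r)v(r)\,dr\bigg)dt+\tilde F(t)\,dt+\sqrt{2}\,dW(t),
\end{equation*}
where $\tilde F$ is the realization of $F$, treated as a fixed (time-dependent) forcing, and the memory term is a fixed functional of the past trajectory. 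Thus, conditionally on $F$ and on the past $\xi_{(-\infty,t_0]}$, the pair $(x,v)$ is a (time-inhomogeneous, but otherwise classical) hypoelliptic diffusion of Langevin type, with the single Brownian motion $W$ entering only the velocity equation.

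First I would fix a realization $F\in\K$ and condition further on the initial past $\xi_0\in\C(-\infty,t_0]$, so that the memory integral becomes an explicit deterministic drift perturbation; since $\xi_0\in\C(-\infty,t_0]$ and $K$ is $C^1$, this drift is continuous in $t$ on $[t_0,t]$ and locally Lipschitz, so the resulting SDE on $[t_0,t]$ is well-posed by Proposition~\ref{prop:well-posedness}. Then I would invoke H\"ormander's condition: the diffusion vector field is $\partial_v$ and the drift $V_0$ contains $v\,\partial_x$, so the bracket $[V_0,\partial_v]$ recovers $\partial_x$, and $\{\partial_v,\partial_x\}$ span $\rbb^2$. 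Hence the (conditional) transition density $p_{t_0,t}^{F,\xi_0}(\xi_0(t_0),\cdot)$ exists, is smooth on $\rbb^2$, and is strictly positive everywhere on $\rbb^2$ by the support theorem (the control system $\dot x=v$, $\dot v=-v-\U'(x)-(\text{drift})+u$ is exactly controllable between any two points of $\rbb^2$ in time $t-t_0$ using $u\in L^2$). Averaging over $\xi_0$ against the disintegration of $\mu^F$ along the past, and using stationarity to push $t_0\to-\infty$ (or simply taking any $t_0<t$), the marginal of $\Law(\xi\,|\,F)$ at time $t$ is a mixture of everywhere-positive densities, hence itself absolutely continuous with an a.e.-positive density, which is precisely equivalence with Lebesgue measure on $\rbb^2$.

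The main obstacle I anticipate is making the conditioning on $F$ rigorous and uniform: one must argue that for $\P$-a.e.\ realization $F$ the above hypoelliptic/support machinery applies with the forcing $\tilde F$ merely continuous (which is fine, since H\"ormander and the Stroock--Varadhan support theorem only require the drift to be a bounded-on-compacts continuous function of time, not smooth in $t$), and that the disintegration $\mu^F=\Law(\xi\,|\,F)$ is well-defined and its time-$t$ marginal really is obtained by flowing a past distribution forward under $\varphi^{F,W}_{t_0,t}$ — but this last fact is exactly the skew-invariance relation $\mu^F R_t^F=\mu^{\theta_t F}$ established in Section~\ref{sec:SkewInvMeasure}, applied to the stationary solution at hand. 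A secondary technical point is integrability: one needs the drift perturbation coming from $\int_{-\infty}^{t_0}K(t-r)v_0(r)\,dr$ together with $\sup_{[t_0,t]}|\tilde F|$ to be almost surely finite so that the SDE on $[t_0,t]$ does not explode; this is guaranteed by $\xi_0\in\C(-\infty,t_0]$ and by the bound $\E\sup_{[t_0,t]}F(r)^2<\infty$ from the Remark following Proposition~\ref{prop:well-posedness}. Once these measurability and non-explosion points are handled, the positivity of the density is standard.
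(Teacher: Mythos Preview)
Your outline has a real gap at the step where you ``condition further on the initial past $\xi_0\in\C(-\infty,t_0]$, so that the memory integral becomes an explicit deterministic drift perturbation.'' Conditioning on the past up to time $t_0$ fixes only the tail $\int_{-\infty}^{t_0}K(t-r)v_0(r)\,dr$; the remaining piece $\int_{t_0}^{t}K(t-r)v(r)\,dr$ depends on the solution $v$ over $[t_0,t]$ and is therefore genuinely path-dependent, not a time-inhomogeneous coefficient. Consequently the SDE you write is not a (time-inhomogeneous) Markov diffusion on $\rbb^2$, and neither the classical H\"ormander theorem nor the Stroock--Varadhan support theorem applies as stated---both of those results are formulated for drifts that are functions of $(t,x,v)$, not functionals of the whole trajectory. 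One can develop Malliavin-type smoothness results for such Gibbsian SDEs, but this is not standard machinery and would need to be set up from scratch; as written, the ``H\"ormander directly'' step is not justified.

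The paper's proof works precisely by removing this path-dependent piece rather than keeping it in the drift. It introduces a truncated equation in which the entire perturbation $-\int_{-\infty}^t K(t-r)v(r)\,dr + F(t)$ is multiplied by a cutoff $\theta_n(|x|+|v|+|F|)$; the cutoff makes this term bounded (using Assumption~\ref{cond:K:supK(t+s)/K(s)} to control the past contribution), so Novikov's condition holds and Girsanov gives equivalence of the truncated law $Q^{n,F}_{[0,t]}(\xi_0,\cdot)$ with the law of the \emph{standard} memoryless Langevin equation. That equation is Markov and hypoelliptic, so its time-$t$ marginal is equivalent to Lebesgue measure, and hence so is $Q^{n,F}_t(\xi_0,\cdot)$. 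A separate lemma shows $\xi^n\to\xi$ in $C([0,t];\rbb^2)$ as $n\to\infty$, which is used to transfer the conclusion to $Q^F_t(\xi_0,\cdot)$. If you want to salvage your approach, the natural fix is exactly this: use Girsanov to strip off the path-dependent part of the drift, but then you must verify Novikov, and for that you need an a priori bound on $v$ over $[t_0,t]$---which is what forces the truncation-and-limit argument.
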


\begin{proposition} \label{lem:Q_infinity:equivalent} Under the hypotheses of Theorem~\ref{thm:unique}, let $\xi_0$ and $\widetilde{\xi}_0$ be two
  initial pasts in $\C_\varrho(-\infty,0]$ such that
  $\xi_0(0)=\widetilde{\xi}_0(0)$. Then for almost every realization
  of $F$, the measures $Q^F_{[0,\infty)}(\xi_0, \,\cdot\, )$ and $Q^F_{[0,\infty)}(\widetilde{\xi}_0,\,\cdot\,)$ are equivalent.
\end{proposition}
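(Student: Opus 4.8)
The plan is to prove Proposition~\ref{lem:Q_infinity:equivalent} by reducing the equivalence of the two future laws to a change-of-drift argument via Girsanov's theorem, exploiting that the two solutions $\xi = \varphi^{F,W}_\infty(\xi_0)$ and $\widetilde{\xi} = \varphi^{F,W}_\infty(\widetilde{\xi}_0)$ are driven by the \emph{same} noise realizations $W$ and $F$, agree at time $0$ (since $\xi_0(0) = \widetilde{\xi}_0(0)$), but differ in their memory terms because their pasts on $(-\infty,0]$ differ. Write $\xi(t) = (x(t),v(t))$ and $\widetilde{\xi}(t) = (\widetilde{x}(t),\widetilde{v}(t))$. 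The key observation is that both processes satisfy the same SDE on $[0,\infty)$ except that the convolution term $\int_{-\infty}^t K(t-r)v(r)\,dr$ versus $\int_{-\infty}^t K(t-r)\widetilde{v}(r)\,dr$ differs. Decompose the difference of these drifts as the sum of a ``past contribution'' $g(t) := \int_{-\infty}^0 K(t-r)(v_0(r) - \widetilde{v}_0(r))\,dr$, which is a deterministic (given $F$) function of $t$ depending only on the initial pasts, and a ``future feedback'' term $\int_0^t K(t-r)(v(r)-\widetilde{v}(r))\,dr$.

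The strategy is then: first, show that $\widetilde{\xi}$ on $[0,\infty)$ can be realized as the solution of the same GLE driven by $W$ and $F$ \emph{but with initial past $\xi_0$}, provided one adds a compensating drift. More precisely, define $h(t) := \widetilde v(t) - v(t)$ and observe that the difference equation for $h$ is a linear, memory-type ODE forced by $-g(t)\,dt$ (plus the difference of the nonlinear terms $\U'(x)-\U'(\widetilde x)$). The point is that $h$ and hence the correction to the $v$-drift can be written as an adapted process $u(t)$ driven entirely by $g$ and the already-constructed trajectories; then $\widetilde\xi$ solves the SDE with past $\xi_0$ and an extra additive drift $u(t)\,dt$ in the $v$-equation. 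Since the noise in the $v$-equation is $\sqrt 2\,dW(t)$ (nondegenerate in that component), Girsanov's theorem gives that $Q^F_{[0,\infty)}(\widetilde\xi_0,\cdot)$ is absolutely continuous with respect to $Q^F_{[0,\infty)}(\xi_0,\cdot)$ provided the Novikov-type condition $\E\big[\exp\big(\tfrac12\int_0^\infty |u(t)|^2\,dt\big)\big] < \infty$ (or the weaker local condition with a stopping-time localization) holds; by symmetry one gets equivalence.

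To make Girsanov applicable on the infinite interval $[0,\infty)$, I would localize: show absolute continuity of the restrictions to $C([0,T];\rbb^2)$ for each finite $T$ with a density that is a.s.\ positive, and then use that the laws on $C([0,\infty);\rbb^2)$ are mutually absolutely continuous iff this holds on every $[0,T]$ together with a tightness/consistency argument — actually the cleanest route is to verify the Novikov condition on each $[0,T]$ (which only needs $\int_0^T |u(t)|^2\,dt < \infty$ a.s.\ plus integrability), giving equivalence of the restrictions, and then to invoke that a measure on $C([0,\infty))$ is determined by its restrictions, with the Radon--Nikodym derivatives forming a consistent (martingale) family; the limiting density is then positive and finite a.s.\ because $\int_0^\infty |u(t)|^2\,dt < \infty$. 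The latter decay is where the hypotheses are used: $\xi_0,\widetilde\xi_0 \in \C_\varrho(-\infty,0]$ controls $|v_0(r)-\widetilde v_0(r)|$ growth, Assumption~\ref{cond:K:K(t)<t^-alpha} gives $K(t) \le Ct^{-\alpha}$, and the condition $\varrho < \alpha - 1/2$ is precisely what makes $g(t) = \int_{-\infty}^0 K(t-r)(v_0(r)-\widetilde v_0(r))\,dr$ decay fast enough (like $t^{-\alpha+\varrho}$, up to logarithmic factors, which is square-integrable at infinity since $2(\alpha-\varrho) > 1$); one then propagates this decay to $u$ through the stable linear dynamics for $h$ (using the friction $-v\,dt$ and the sign structure of the memory term, together with the energy estimate~\eqref{ineq:energy-estimate} to control $\U'(x)-\U'(\widetilde x)$).

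The main obstacle I expect is controlling the future-feedback: the correction drift $u(t)$ is not simply $-g(t)$ but solves itself a GLE-type equation whose memory kernel is again $K$, so one must show that this linear memory equation is stable enough that an $L^2$-in-time forcing $g$ produces an $L^2$-in-time (or at least square-integrable) response $u$, uniformly enough to pass to the limit $T\to\infty$. Establishing this likely requires a Laplace-transform / resolvent estimate on the linear operator $v \mapsto v + K * v$ (positivity of $K$ helps: $\widehat K(\omega)$ has nonnegative real part, so the symbol $1 + \widehat K$ stays away from the imaginary axis), combined with the potential term handled perturbatively via the a priori energy bound. A secondary technical point is the a.s.\ finiteness and positivity of the infinite-horizon density, which I would handle by the consistency/martingale-convergence argument sketched above rather than a single global Novikov estimate; the auxiliary results in Section~\ref{sec:auxil-result} (to which the excerpt defers) presumably supply exactly these resolvent and decay estimates.
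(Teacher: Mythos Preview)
Your Girsanov strategy is the right one, but there is a genuine misconception at its core which, once corrected, makes the argument far shorter than you anticipate.

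\textbf{The ``future feedback'' is a phantom.} To apply Girsanov you must write the SDE satisfied by $\widetilde\xi$ in the form [GLE with initial past $\xi_0$, evaluated along the trajectory $\widetilde\xi$] plus a correction drift $u(t)\,dt$. When you do this, the future memory $\int_0^t K(t-r)\widetilde v(r)\,dr$ appears \emph{identically} on both sides and cancels; the nonlinear term $\U'(\widetilde x(t))$ likewise appears on both sides. What remains is
\[
u(t)=\int_{-\infty}^0 K(t-r)\big(v_0(r)-\widetilde v_0(r)\big)\,dr = g(t),
\]
a \emph{deterministic} function of $t$. Your computation of $h=\widetilde v - v$ is the difference of two distinct solutions and is irrelevant to the Girsanov drift; the resolvent/Laplace machinery you outline is therefore unnecessary. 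Since $g$ is deterministic, Novikov on $[0,\infty)$ reduces to the single inequality $\int_0^\infty g(t)^2\,dt<\infty$, and no localization or martingale-convergence argument is needed.

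\textbf{The estimate on $g$ requires one more idea you are missing.} You claim that membership in $\C_\varrho(-\infty,0]$ controls $|v_0(r)-\widetilde v_0(r)|$; it does not. By definition~\eqref{eqn:CpastR^2} the space $\C_\varrho$ bounds only the growth of $x$. The paper's device is to integrate by parts first (this is where the hypothesis $\xi_0(0)=\widetilde\xi_0(0)$ enters, killing the boundary term at $r=0$), giving
\[
g(t)=\int_{-\infty}^0 K'(t-r)\,\overline{x}_0(r)\,dr,\qquad \overline{x}_0=x_0-\widetilde x_0.
\]
Now $|\overline{x}_0(r)|\le C(1+|r|^\varrho)$ is available, and Assumption~\ref{cond:K:K(t)<t^-alpha} together with L'H\^opital yields $|K'(t)|\lesssim t^{\epsilon-\alpha-1}$ for large $t$. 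A direct computation with these two ingredients shows $\int_0^\infty g(t)^2\,dt<\infty$ precisely when $\varrho<\alpha-\tfrac12$, and the proof is complete in a few lines. Without the integration by parts you have no handle on $v_0-\widetilde v_0$ and the estimate cannot be closed.
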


Given these two results, we can now conclude Theorem~\ref{thm:unique}.

\begin{proof}[Proof of \cref{thm:unique}] We first fix some notation.  Given a set $A\subset C(\rbb;\rbb^2)$, a measure $\nu$ on Borel subsets of $C(\rbb;\rbb^2)$ and a time interval $\mathrm{T}\subset \rbb$, we denote by $\pi_\mathrm{T}A$ and $\pi_\mathrm{T}\nu$ to be respectively the projection of $A$ and $\nu$ on $\mathrm{T}$. In other words, letting $\pi_\mathrm{T}\xi$ be the projection of a trajectory $\xi$ onto the time interval $\mathrm{T}$, we set
$$
\pi_\mathrm{T}A=\{\pi_\mathrm{T} \xi: \xi\in A\},
$$
and for any Borel set $B\subset C\big(\mathrm{T};\rbb^2\big)$, 
$$\pi_\mathrm{T}\nu(B):=\nu(\{\xi\in C(\rbb;\rbb^2):\pi_\mathrm{T}\xi\in B  \}).$$

 Let $(\xi_1,F_1)$ and
  $(\xi_2,F_2)$ be two stationary solutions of~\eqref{eqn:GLE}. Without lost of generality, we
  may assume that $\Law(\xi_1,F_1)$ and $\Law(\xi_2,F_2)$ are ergodic by ergodic decomposition. As discussed in Section~\ref{sec:SkewInvMeasure},  we
  can disintegrate $\Law(\xi_i , F_i)$ into $\Law(\xi_i \, |\,
  F)$ relative to $\Law(F)$ since $\Law(F)=\Law(F_1) = \Law(F_2)$. Letting $\nu_i=\Law(\xi_i\,|\,F)$, $i=1,2$, we aim to prove $\nu_1=\nu_2$ assuming $\nu_1$ and $\nu_2$ are supported in $\C_\varrho(-\infty,\infty)$.

Fixing an arbitrary bounded function $\phi :C([0,\infty); \rbb^2)
   \rightarrow \rbb$ which only depends on some compact set of time, Birkhoff's Ergodic Theorem implies that there exists a 
  set $$A_{i} \subset  \C_\varrho(-\infty,\infty),$$ such that $\nu_{i}(A_{i})=1$ and for
  every $\xi \in A_{i}$,
  \begin{align}\label{eq:avgPhi}
    \lim_{T \rightarrow \infty} \frac{1}{T}\int_0^T \phi( \pi_{[0,\infty)}\theta_t \xi) dt = \int
    \phi(\pi_{[0,\infty)}\xi) \nu_{i}(d\xi)=: \phi_{i}.
  \end{align}
It suffices to prove that $\phi_{1}=\phi_{2}.$ To this end, for each $\zeta\in C\big((-\infty,0];\rbb^2\big)$, we set
$$B_i(\zeta) =\{\pi_{[0,\infty) }z : z \in A_i
,\pi_{(-\infty,0]} z = \zeta\}.$$ 
Since $\nu_i$ is supported in $A_i$, it is clear that
\begin{align*}
\pi_{[0,\infty)}\nu_i(\pi_{[0,\infty)}A_i) = \nu(\{z\in A_i:\pi_{[0,\infty)}z\in\pi_{[0,\infty)}A_i\})=\nu_i(A_i)=1.
\end{align*}
On the other hand, recalling $Q^F_{[0,\infty)}$ is the future law as in~\eqref{eq:QDef}, observe that 
\begin{align*}
 1= \pi_{[0,\infty)}\nu_i(\pi_{[0,\infty)}A_i) &= \int_{
  \pi_{(-\infty,0]}A_i}\close\close  Q_{[0,\infty)}^F\big(\,\zeta,
 \pi_{[0,\infty)}A_i\,\big) \pi_{(-\infty,0]}\nu_i(d\zeta),\\
  &= \int_{
  \pi_{(-\infty,0]}A_i}\close\close  Q_{[0,\infty)}^F\big(\,\zeta,
 \{\pi_{[0,\infty)}z:z\in A,\pi_{(-\infty,0]}z=\zeta\}\,\big) \pi_{(-\infty,0]}\nu_i(d\zeta),\\
 &= \int_{
  \pi_{(-\infty,0]}A_i}\close\close  Q_{[0,\infty)}^F\big(\,\zeta,
 B_i(\zeta)\,\big) \pi_{(-\infty,0]}\nu_i(d\zeta),
\end{align*}
We
then conclude that for almost every $\zeta \in
\pi_{(-\infty,0]}A_i$ with respect to $\pi_{(-\infty,0]}\nu_i$, we see that
\begin{equation} \label{eqn:Q(zeta,B(zeta))=1}
Q_{[0,\infty)}^F\big(\,\zeta,
 B_{i}(\zeta)\,\big) =1.
\end{equation}
In view of \cref{lem:Lebesgue-equivalent}, we know that $\pi_0\nu_1$ and $\pi_0\nu_2$ are both equivalent to Lebesgue measure in $\rbb^2$. So that $\pi_0A_1\cap\pi_0A_2\neq \emptyset$. Together with~\eqref{eqn:Q(zeta,B(zeta))=1}, it follows that there exist $\zeta_1$ and $\zeta_2$ such that $\zeta_1(0)=\zeta_2(0)$ and $Q_{[0,\infty)}^F\big(\,\zeta_i,
 B_i(\zeta_i)\,\big) =1$ for $i=1,2$.  As
 \cref{lem:Q_infinity:equivalent} implies that
 $Q_{[0,\infty)}^F\big(\,\zeta_1,\ccdot\big)$ is equivalent to $Q_{[0,\infty)}^F\big(\,\zeta_2,\ccdot\big)$, we also
 know that $$Q_{[0,\infty)}^F\big(\,\zeta_1, B_2( \zeta_2)\big)=1=Q_{[0,\infty)}^F\big(\,\zeta_2, B_1( \zeta_1)\big),$$ 
 and hence
 \begin{align*}
   Q_{[0,\infty)}^F\big(\,\zeta_i,
 B_1( \zeta_1) \cap  B_2(\zeta_2)\,\big) =1,\quad  \text{for}\, i=1,2.
 \end{align*}
In particular, this implies that $ B_1( \zeta_1) \cap  B_2(\zeta_2) \neq \emptyset$. By the definition of $B_i( \zeta_i)$, there exist $z_i\in A_i$, $i=1,2$ such that $\pi_{(-\infty,0]}z_i=\zeta_i$ and
$$ \pi_{[0,\infty)}z_1=\pi_{[0,\infty)}z_2 \in B_1( \zeta_1) \cap  B_2(\zeta_2), $$
whence for all $t\ge 0$,
$$\pi_{[0,\infty)}\theta_t z_1=\pi_{[0,\infty)}\theta_t z_2.$$ 
As a consequence, we have from \eqref{eq:avgPhi} that
\begin{align*}
  \bar \phi_1= \lim_{T \rightarrow \infty}\frac1T \int_0^T \phi(\pi_{[0,\infty)}\theta_t z_1) dt =\lim_{T \rightarrow \infty}\frac1T \int_0^T \phi(\pi_{[0,\infty)}\theta_t z_2) dt = \bar \phi_2.
\end{align*}
%as $z$ is simultaneously in $\pi_{[0,\infty)} A^{(1)}_\phi$ and in
%$\pi_{[0,\infty)} A^{(2)}_\phi$. 
As $\phi$ was from a class of
functions sufficiently rich to determine the laws of $\xi^{(i)}$, $i=1,2$,
we conclude the laws are the same since we have proven that $\bar \phi^{(1)}= \bar \phi^{(2)}$.
\end{proof}

%Having fixed the topology in $C(\rbb;\rbb^2)$, in Section~\ref{sec:Skew} below, we discuss the notion of skew-flows generated by~\eqref{eqn:GLE}.

 \section{Proofs of \cref{lem:Lebesgue-equivalent} and \cref{lem:Q_infinity:equivalent}} \label{sec:auxil-result}
In order to setup the proof of Proposition~\ref{lem:Q_infinity:equivalent}, observe that we may express equation~\eqref{eqn:GLE} in a convenient form using integration-by-parts on the convolution term.  Indeed, by Assumption~\ref{cond:K:K(t)<t^-alpha}, there exist constants $C, t_*>0$ and $\alpha >1/2$ such that $K(t)\leq C t^{-\alpha}$ as $t\geq t_*$.  Since $K$ is continuously differentiable, L'Hospital's rule implies that 
for any $\epsilon >0$, $K'(t) t^{\alpha +1 -\epsilon} \rightarrow 0$ as $t\rightarrow \infty$.  Now, given that $\xi_0=(x_0,v_0)\in\C_\varrho(-\infty,0]$ where $\varrho<\alpha-1/2$, using integration-by-parts we may thus rewrite \eqref{eqn:GLE} as
\begin{equation}\label{eqn:GLE:K'}
\begin{aligned}
dx(t)&=v(t)dt,\\
m\, dv(t)&=-\gamma v(t)\, dt- \U'(x(t)) \, dt-K(0)x(t)\, dt+ \int_{-\infty}^t\close K'(t-r)x(r)\, dr \, dt \\
&\qquad+  \sqrt{2\gamma} \,dW(t)+ F(t) \, dt.
\end{aligned}
\end{equation}
\begin{proof}[Proof of Proposition~\ref{lem:Q_infinity:equivalent}]  Suppose $\xi_0, \xitilde_0 \in \C_\varrho(-\infty,0]$. Let $\overline{\xi}_0=\xi_0-\xitilde_0$ and observe that~\eqref{eqn:GLE:K'} with $m=\gamma=1$ and initial condition $\widetilde{\xi}_0$ can be expressed as 
\begin{equation} \label{eqn:GLE:int.K'(t-r)x(r):shift.noise}
\begin{aligned}
d \xtil(t)&=\vtil(t) \, dt,\\
d \vtil(t)&=-\vtil(t)\, dt-\U'(\xtil(t))\, dt- K(0) \xtil(t)dt+ \int_{-\infty}^0 K'(t-r) x_0(r)dr dt\\
&\qquad+\int_0^t K'(t-r) \xtil(r)dr dt +F(t) \, dt\\
&\qquad\ +\sqrt{2}dW(t)-\int_{-\infty}^0 K'(t-r) \xbar_0(r)dr dt. 
\end{aligned}
\end{equation}
If the following Novikov condition is satisfied 
\begin{align*}
\E\exp\Big\{\frac{1}{2}\int_0^\infty\close \Big(\int_{-\infty}^0 K'(t-r)\xbar_0(r)dr\Big)^2 dt\Big\}<\infty, 
\end{align*}
then Girsanov's theorem would imply the desired measure equivalence on future paths.  
Since $\xbar_0$ is deterministic, it suffices to show that the above integral is finite. To this end, we note that  since $\xi_0, \widetilde{\xi}_0\in\Cpastrho$, $\xbar_0(\cdot)$ satisfies the growth bound \begin{align*}
\| \xbar_0 \|_{\varrho}:=\sup_{r\leq 0}\frac{|\xbar_0(r)|}{1+|r|^\varrho}<\infty. 
\end{align*}
Using this fact, we estimate as follows: 
\begin{align*}
\int_0^\infty\close \Big(\int_{-\infty}^0 K'(t-r) \xbar_0(r)dr\Big)^2 dt&= \int_0^\infty\bigg( \int_{-\infty}^0 K'(t-r)(1+ |r|^\varrho ) \frac{\xbar_{0}(r)}{1+ |r|^\varrho} \, dr \bigg)^2 \, dt\\
& \leq \| \xbar_0 \|_{\varrho}^2  \int_0^\infty\bigg( \int_{-\infty}^0 K'(t-r)(1+ |r|^\varrho ) \, dr \bigg)^2 \, dt\\
&= \| \xbar_0 \|_{\varrho}^2  \int_0^\infty\bigg( \int_{0}^\infty K'(t+r)(1+ r^\varrho ) \, dr \bigg)^2 \, dt.\end{align*}
For $\epsilon>0$ to be chosen later, recalling by L'Hospital's rule applied to $K(t)/t^{\epsilon-\alpha}$, by Assumption~\ref{cond:K:K(t)<t^-alpha}, we saw that $K'(t) /  t^{\epsilon-\alpha -1}\to 0$ as $t\rightarrow \infty$. Hence, there exist $C>0$ and $t_0 >1$ such that $|K'(t)| \leq C t^{\epsilon-\alpha -1}$ for all $t \geq t_0$.  It then follows that 
\begin{align*}
\int_0^\infty\Big( \int_{0}^\infty &K'(t+r)(1+ r^\varrho ) \, dr \Big)^2 \, dt\\
& = \int_0^{t_0} \Big( \int_{0}^{t_0} K'(t+r)(1+ r^\varrho ) \, dr + \int_{t_0}^{\infty} K'(t+r)(1+ r^\varrho ) \, dr \Big)^2 \, dt\\
&\qquad \qquad+ \int_{t_0}^\infty\Big( \int_{0}^\infty K'(t+r)(1+ r^\varrho ) \, dr \Big)^2 \, dt\\
& \qquad \leq C_1+C_2 \int_0^{t_0} \Big( \int_{t_0}^\infty \frac{1+r^\varrho}{r^{1+\alpha-\epsilon}} \, dr\Big)^2 \, dt  + C_3 \int_{t_0}^\infty\Big( \int_{0}^\infty \frac{1+ r^\varrho }{(t+r)^{1+\alpha-\epsilon}} \, dr \Big)^2 \, dt. 
\end{align*}
Choosing $0<\epsilon<\alpha-\varrho-1/2$, notice that the first integral on the right hand side of the last line above is finite since $\alpha-\epsilon > \varrho$.  For the final integral above, recalling that $t_0>1$ and making the substitution $u= r/t$ produces 
\begin{align*}
\int_{t_0}^\infty\bigg( \int_{0}^\infty \frac{1+ r^\varrho }{(t+r)^{1+\alpha-\epsilon}} \, dr \bigg)^2 \, dt &= \int_{t_0}^\infty \frac{1}{t^{2(\alpha-\epsilon)}} \bigg( \int_{0}^\infty \frac{1+ t^{\varrho} u^\varrho }{(1+u)^{1+\alpha}} \, du \bigg)^2 \, dt\\
& \le  \int_{t_0}^\infty \frac{1}{t^{2(\alpha-\epsilon-\varrho)}}d t \bigg( \int_{0}^\infty \frac{1+  u^\varrho }{(1+u)^{1+\alpha}} \, du \bigg)^2 < \infty,
\end{align*}
since $ \alpha-\epsilon - \varrho>1/2$.  This finishes the proof. 

\end{proof}

We now turn to the proof of Proposition~\ref{lem:Lebesgue-equivalent}. In
order to show equivalence in measures, we aim to
compare~\eqref{eqn:GLE} with a standard, memoryless Langevin
equation. However, because of the memory terms and the nonlinearity
$\U'$, we do not do so directly. Instead, we will consider a truncated
version of~\eqref{eqn:GLE}, which will be useful in
verifying Novikov's condition. More precisely, let $\theta_n \in C^\infty(\rbb, [0,1])$ satisfy $\theta_n(x) = 1$ for all $|x| \leq n$ and $\theta_n(x)= 0$ for $|x| \geq n+1$, and consider the following system with initial path $\xi_0$
\begin{equation} \label{eqn:GLE:truncating:memory}
\begin{aligned}
d\, x(t)&=v(t) dt,\\
d\, v(t)&=-v(t)dt-\U'(x(t))dt+\sqrt{2}dW(t)\\
&\qquad+\theta_n\big(|x(t)|+|v(t)|+|F(t)|\big)\bigg(-\int_{-\infty}^t K(t-r)v(r)dr+F(t) \,\bigg) dt.
\end{aligned}
\end{equation}
In the following auxiliary result, we show that the solution of~\eqref{eqn:GLE:truncating:memory} converges to that of~\eqref{eqn:GLE} as $n$ tends to infinity. 
\begin{lemma} \label{lem:GLE:truncating:memory}
Given an initial condition $\xi_0\in \C(-\infty,0]$ as in~\eqref{eqn:Cpast:int.K'(r).x(r)<infty}, let $\xi^n$ and $\xi$ respectively be the solutions of~\eqref{eqn:GLE:truncating:memory} and~\eqref{eqn:GLE} (with $m=\gamma=1$ in~\eqref{eqn:GLE}) with the same initial history $\xi_0$.  Then, for all $t\ge 0$, 
\begin{equation} \label{eqn:lim:x^R.converge.x}
\lim_{n\to\infty}\E\sup_{0\le r\le t}\bigg\{|x^n(r)-x(r)|+|v^n(r)-v(r)|\bigg\}=0.  
\end{equation}

\end{lemma}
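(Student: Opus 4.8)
The plan is to compare $\xi^n$ and $\xi$ up to the first time the \emph{true} solution $\xi$ together with the forcing $F$ leaves a ball of radius $n$, to observe that on that time interval the two solutions coincide by pathwise uniqueness, and then to estimate the (small) event on which this time falls before $t$ against a uniform-in-$n$ moment bound. First I would record the a priori bounds needed. Proposition~\ref{prop:well-posedness} gives $\E\sup_{0\le r\le t}H(x(r),v(r))<\infty$, and the result of Appendix~\ref{appendix:F} gives $\E\sup_{0\le r\le t}F(r)^2<\infty$. By an argument identical to the proof of Proposition~\ref{prop:well-posedness} — the cutoff factor $\theta_n$ is bounded by $1$ and vanishes precisely where the memory and coloured-noise terms are large, so in the energy computation it can only help — the truncated system~\eqref{eqn:GLE:truncating:memory} is pathwise well-posed and satisfies an energy estimate with constants independent of $n$; in particular $\sup_n\E\sup_{0\le r\le t}H(x^n(r),v^n(r))<\infty$. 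Since Assumption~\ref{cond:Phi} forces $\U\ge -1$ and $H(x,v)\gtrsim|x|^{1+\delta}+v^2$, we deduce that $\sup_n\E\sup_{0\le r\le t}\big(|x^n(r)|^{1+\delta}+|v^n(r)|^2\big)<\infty$, and likewise for $\xi$.

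\textbf{Localization.} Define the stopping time
\[
\tau_n=\inf\{r\ge 0:\ |x(r)|+|v(r)|+|F(r)|\ge n\}.
\]
On $[0,\tau_n)$ we have $\theta_n\big(|x(r)|+|v(r)|+|F(r)|\big)=1$, so $\xi$ solves~\eqref{eqn:GLE:truncating:memory} on $[0,\tau_n\wedge t]$ with initial past $\xi_0$; since $\xi^n$ is the pathwise unique solution of~\eqref{eqn:GLE:truncating:memory} with the same initial past, we get $x^n(r)=x(r)$ and $v^n(r)=v(r)$ for all $r\in[0,\tau_n\wedge t]$. Hence the supremum in~\eqref{eqn:lim:x^R.converge.x} is supported on $\{\tau_n\le t\}$:
\[
\E\sup_{0\le r\le t}\big\{|x^n(r)-x(r)|+|v^n(r)-v(r)|\big\}=\E\Big[\mathbf{1}_{\{\tau_n\le t\}}\sup_{0\le r\le t}\big\{|x^n(r)-x(r)|+|v^n(r)-v(r)|\big\}\Big].
\]

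\textbf{Conclusion.} I would bound the right-hand side by H\"older's inequality with exponent $p=1+\delta\in(1,2)$, obtaining at most
\[
\P(\tau_n\le t)^{\delta/(1+\delta)}\Big(\E\sup_{0\le r\le t}\big\{|x^n(r)-x(r)|+|v^n(r)-v(r)|\big\}^{1+\delta}\Big)^{1/(1+\delta)}.
\]
The second factor is bounded uniformly in $n$ by the moment bounds of the first step (using $|x^n-x|\le|x^n|+|x|$, $|v^n-v|\le|v^n|+|v|$, $1+\delta<2$, and $|y|^{1+\delta}\le 1+|y|^2$). For the first factor, note that $|x|+|v|+|F|\ge n$ forces $\max(|x|,|v|,|F|)\ge n/3$, hence $H(x,v)+F(r)^2\ge c\,n^{1+\delta}$ for $n$ large; therefore by Chebyshev's inequality
\[
\P(\tau_n\le t)\le \P\Big(\sup_{0\le r\le t}\big(H(x(r),v(r))+F(r)^2\big)\ge c\,n^{1+\delta}\Big)\le \frac{\E\sup_{0\le r\le t}\big(H(x(r),v(r))+F(r)^2\big)}{c\,n^{1+\delta}}\to 0
\]
as $n\to\infty$. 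Combining the two estimates yields~\eqref{eqn:lim:x^R.converge.x}.

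\textbf{Main obstacle.} The genuinely technical point is the first step: the energy estimate for the truncated system~\eqref{eqn:GLE:truncating:memory} with constants independent of $n$, and the pathwise uniqueness for that system. Both run parallel to the proof of Proposition~\ref{prop:well-posedness} — the cutoff $\theta_n$ only tames the troublesome memory and coloured-noise contributions — so the only real work is verifying that the constants produced there do not degrade as $n\to\infty$; everything else is the soft stopping-time and H\"older bookkeeping above.
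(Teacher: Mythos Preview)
Your proposal is correct and follows essentially the same route as the paper: define the stopping time $\tau_n$ based on the true solution $\xi$ and $F$, observe that $\xi=\xi^n$ on $[0,\tau_n\wedge t]$, then split via H\"older with exponent $1+\delta$ and control the moment factor by the uniform-in-$n$ energy estimate and the probability factor by Chebyshev. The only cosmetic difference is that the paper applies Chebyshev directly to $\sup(|x|+|v|+|F|)\ge n$ to get $\P(\tau_n\le t)\le C/n$, whereas you pass through $H+F^2\ge c\,n^{1+\delta}$ to obtain the slightly sharper $C/n^{1+\delta}$; either suffices.
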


The proof of Lemma~\ref{lem:GLE:truncating:memory} follows a standard comparison argument that will be deferred to the end of this section. Assuming this result, we now establish Proposition~\ref{lem:Lebesgue-equivalent}.
\begin{proof}[Proof of Proposition~\ref{lem:Lebesgue-equivalent}] Let $Q^F_t(\xi_0,\,\cdot\,)$ be the law at time $t$ of $\f^{F,W}_{0,t}\xi_0$ on $\rbb^2$. By stationarity,
\begin{align*}
\pi_t\Law(\xi,\,\cdot\,|F)=\int Q^{F}_t(\xi_0,\,\cdot\,)\nu_{(-\infty,0]}(d\xi_0\times dF).
\end{align*}
It therefore suffices to show that $Q^F_t(\xi_0,\,\cdot\,)$ is equivalent to Lebesgue measure. 

Recalling that $\xi^n=(x^n, v^n)$ denotes the solution of~\eqref{eqn:GLE:truncating:memory}, let $Q^{n,F}_{[0,t]}(\xi_0,\,\cdot\,)$ be the law induced by $\xi^n$ on $C([0,t];\rbb^2)$ and let $Q^{n,F}_t(\xi_0,\,\cdot\,)$ be the marginal of $Q^{n,F}_{[0,t]}(\xi_0,\,\cdot\,)$ at time $t$. We note that 
\begin{align*}
\int_{-\infty}^tK(t-r)v^n(r)dr=\int_{-\infty}^0K(t-r)v_0(r)dr+\int_0^t K(t-r)v^n(r)dr.
\end{align*}
By Assumption~\ref{cond:K:supK(t+s)/K(s)} and the definition of $\theta_n$, the following estimate holds almost surely
\begin{align*}
\theta_n\bigg(|x^n(t)|+|v^n(t)|+|F(t)|\bigg)&\bigg|-\int_{-\infty}^t K(t-r)v(r)dr+F(t)\bigg| \\
&\le \int_{-\infty}^0 K(t-r)|v_0(r)|dr+n\int_0^t K(t-r)dr+n\\
&\qquad\le \widetilde{K}(t)\int_{-\infty}^0 K(-r)|v_0(r)|dr+n\int_0^t K(t-r)dr+n,
\end{align*}
implying the following Novikov-type condition is satisfied
\begin{align*}
&\E \exp\bigg\{\tfrac{1}{2}\int_0^t \theta_n\big(|x^n(r)|+|v^n(r)|+|F(r)|\big)^2\bigg(-\int_{-\infty}^r K(r-\ell)v(\ell)d\ell+F(r)\bigg)^2 dr\bigg\}<\infty.\end{align*}
As a consequence, $Q^{n,F}_{[0,t]}(\xi_0,\,\cdot\,)$ is equivalent to the law $\widetilde{Q}_{[0,t]}(\xi_0(0),\,\cdot\,)$ induced by the solution of the following Langevin equation
\begin{align*}
d\, x(t)&=v(t)dt, & x(0)=x_0(0),\\
d\, v(t)&=-v(t)dt-\U'(x(t))dt+\sqrt{2}dW(t),& v(0)=v_0(0).
\end{align*}
The above system is well-understood. By verifying Hormander's condition, it is clear that $\widetilde{Q}_{t}(\xi_0(0),\,\cdot\,)$ as the marginal law of $\widetilde{Q}_{[0,t]}(\xi_0(0),\,\cdot\,)$ at time $t$ is equivalent to Lebesgue measure \cite{pavliotis2014stochastic}. It follows immediately that $Q^{n,F}_{t}(\xi_0,\,\cdot\,)$ must be too. By taking $n$ to infinity, in  light of Lemma~\ref{lem:GLE:truncating:memory}, $Q^{n,F}_{t}(\xi_0,\,\cdot\,)$ converges to $Q^{F}_{t}(\xi_0,\,\cdot\,)$, which preserves measure equivalence. The proof is thus complete.
\end{proof}

We finally give the proof of Lemma~\ref{lem:GLE:truncating:memory} whose proof is somewhat standard. 
\begin{proof}[Proof of Lemma~\ref{lem:GLE:truncating:memory}]
We first note that by adapting the energy estimate as in the proof of Proposition~\ref{prop:well-posedness} to~\eqref{eqn:GLE:truncating:memory}, we have the following uniform bound in $n$
\begin{align} 
&\E\sup_{r \in [0, t]} H(x_n(r), v_n(r))\notag \\
&\le \Big(H(x_0(0), v_0(0))+\Big(\int_{-T}^0 K(-w) |v_0(w)| \, dw\Big)^2+ \E \sup_{r\in [0,t]}|F(r)|^2+1   \Big)e^{c(t)}.\label{ineq:E.sup_(0<r<t)H(x_n(r), v_n(r)):memory}
\end{align}
Now consider the stopping time $\tau_n$ given by
\begin{align*}
\tau_n =\inf\{t\geq 0: |x(t)|+|v(t)|+ |F(t)| \geq n\}.
\end{align*}
It is clear that $\xi(r)=\xi^n(r)$ for all $0\le r\le \tau_n$. Using Holder's inequality and recalling $\delta\in(0,1)$ as in Assumption~\ref{cond:Phi}:
\begin{align*}
&\E\sup_{0\le r\le t}|x^n(r)-x(r)|+|v^n(r)-v(r)|\\
&=\E\Big(\sup_{0\le r\le t}|x^n(r)-x(r)|+|v^n(r)-v(r)|1\{\tau_n\le t\}\Big)\\
&\le c\Big(\E\sup_{0\le r\le t} |x^n(r)|^{1+\delta} +|v^n(r)|^{1+\delta} +\E\sup_{0\le r\le t} |x(r)|^{1+\delta} +|v(r)|^{1+\delta} \Big)^{1/(1+\delta )}\Big(\P(\tau_n\le t)\Big)^{\delta/(1+\delta)}. 
\end{align*}
We invoke the energy estimates~\eqref{ineq:energy-estimate} and~\eqref{ineq:E.sup_(0<r<t)H(x_n(r), v_n(r)):memory} and recall that the nonlinear potential $U$ dominates $|x|^{1+\delta}$, cf. Assumption~\ref{cond:Phi}, to see that 
\begin{align*}
&\E\sup_{0\le r\le t} |x^n(r)|^{1+\delta}+|v^n(r)|^{1+\delta}+\E\sup_{0\le r\le t} |x(r)|^{1+\delta}+|v(r)|^{1+\delta}\\
&\le c \Big(\E\sup_{0\le r\le t} |x^n(r)|^{1+\delta}+|v^n(r)|^{2}+\E\sup_{0\le r\le t} |x(r)|^{1+\delta}+|v(r)|^{2}+1\Big)\le e^{c(t,\xi_0,F)},
\end{align*}
where $c(t,\xi_0,F)>0$ is a constant independent of $n$. Also, by Chebyshev's inequality and Lemma~\ref{lem:sup.F_t^2},
\begin{align*}
\P(\tau_n\le t)&= \P(\sup_{0\le r\le t}|x(r)|+|v(r)|+ F(r) \geq n)\\&\le \frac{1}{n}\Big(\E\sup_{0\le r\le t} |x(r)|+|v(r)|+\sup_{0\le r\le t} |F(r)|\Big)\le \frac{e^{c(t,\xi_0,F)}}{n}.
\end{align*}
Altogether, we arrive at the bound
\begin{align*}
\E\sup_{0\le r\le t}|x^n(r)-x(r)|+|v^n(r)-v(r)|&\le \frac{e^{c(t,\xi_0,F)}}{\sqrt{n}},
\end{align*}
which converges to zero as $n$ tends to infinity. This finishes the proof.
\end{proof}

\section{Existence of an invariant measure} 
\label{sec:existence}
In this section, we assume that the memory kernel $K$ is of the form 
\begin{align}
K(t) = \sum_{\ell \geq 1}\int_0^\infty J_\ell(s+t) J_\ell(s) \, ds,
\end{align}
where the functions $J_\ell$, $\ell \geq 1$, are as in Remark~\ref{rem:specialform}.  In this case, we will see here that we can construct an explicit stationary measure for the Markov flow on $\C_\varrho((-\infty, 0])$  by pulling back a known invariant measure for an augmented version of~\eqref{eqn:GLE}.

Introducing the auxiliary variable $z_k(t)$ given by
\begin{align} \label{eqn:z_k}
z_\ell(t)=\sqrt{c_\ell}\int_{-\infty}^t  e^{-\lambda_\ell(t-r)}v(r)dr dt+\sqrt{2\lambda_\ell }\int_{-\infty}^t \close e^{-\lambda_\ell(t-r)}dB^{(\ell)}(r)dt,
\end{align}
we find that equation~\eqref{eqn:GLE} can be expressed as 
\begin{equation}\label{eqn:GLE-Markov:xvz}
\begin{aligned}
d\, x(t) &= v(t)\, d t, \\
 d\, v(t)&=- v(t) \, dt-\U'(x(t)) \, dt-\textstyle{\sum}_{\ell\geq 1} \sqrt{c_\ell} z_\ell(t)\,dt+\sqrt{2} \,dW(t) ,\\
d\, z_\ell(t)&=-\lambda_\ell z_\ell(t) \, dt+ \sqrt{c_\ell}v(t) \, dt+\sqrt{2\lambda_\ell}\, dB^{(\ell)}(t),  \,\,\, \ell\geq 1.
\end{aligned}	
\end{equation}
In this setting, the relationship between  the system above and the original equation~\eqref{eqn:GLE} must account for a specific initial condition in the past.  For now, however, we view this system as a Markovian dynamics started from a given initial condition on the phase space $\H_{-s}$ where 
\begin{equation} \label{eqn:H_-s}
\H_{-s}:=\big\{X=(x,v,z_1,\dots):\|X\|_{-s}^2:=x^2+v^2+\textstyle{\sum}_{\ell\geq 1}\ell^{-2s}z_\ell^2<\infty\big\}.  
\end{equation}
In the above, the real parameter $s$ is such that 
\begin{align}
1<2s < \alpha \beta,
\end{align}
and $\alpha, \beta >0$ are as in Remark~\ref{rem:specialform}.  Under these hypotheses, the system~\eqref{eqn:GLE-Markov:xvz} is well-posed on $\H_{-s}$, and the probability measure on $\H_{-s}$ given by 
\begin{align}
 \label{eqn:mu_xvz}
\mu \propto \pi\times\prod_{\ell\ge 1} \nu_\ell,
\end{align}
where $\pi$ is the Boltzmann-Gibbs measure in $(x,v)$ as in~\eqref{eqn:marginal-x-v} and $\{\nu_k\}_{k\ge 1}$ are independent copies of the standard normal distribution $N(0,1)$ on $\rbb$, is an invariant probability measure for the Markov process~\eqref{eqn:GLE-Markov:xvz}~\cite[Theorem 7]{glatt2018generalized}.  

\subsection{The induced measure on path space}

Consider an arbitrary collection of real numbers 
$$t_1 \leq t_2 \leq \cdots \leq t_n , $$
and a collection of Borel sets $A_1,\dots,A_n\subset \Hs$. If $X_{t_1}(\,\cdot\,)$ denotes the solution of~\eqref{eqn:GLE-Markov:xvz} distributed as $\mu$ at time $t_1$, we define $\widehat{\mu}_{t_1,\dots,t_n}$ on the cylinder set $A_1\times\cdots\times A_n$ by
\begin{align} \label{eq:mu_*(A_1...A_n)}
\widehat{\mu}_{t_1,\dots,t_n}(A_1 \times \cdots \times A_n)= \P\{ X_{t_1}(t_1) \in A_1, \ldots, X_{t_1}(t_n) \in A_n \}.  
\end{align}   
Since $\mu$ is invariant for the Markov process~\eqref{eqn:GLE-Markov:xvz}, it can be shown by Kolmogorov's extension theorem (by taking a continuous version of the process $X$ solving~\eqref{eqn:GLE-Markov:xvz}) that the family $$\{\widehat{\mu}_{t_1,\dots,t_n}\,: \, n \in \nbb,\, t_1 \leq t_2 \leq \ldots \leq t_n\},$$ is consistent, and hence induces a stationary measure, denoted by $\widehat{\mu}$, on Borel subsets of $C(\rbb,\Hs)$ whose finite-dimensional distributions are as in~\eqref{eq:mu_*(A_1...A_n)}.  Let $\xi_*=(x_*,v_*)$ denote the projection of the corresponding stationary process on $C(\rbb, \H_{-s})$ onto $C((-\infty, 0], \rbb^2)$.  By definition, it follows that $\psi_t^W(\xi_*, B)$ is stationary on $\S(-\infty, 0]$ given by~\eqref{form:S(-infty,t]}.  Let $\mu_*$ denote its corresponding distribution, which in particular is invariant for the Markov semigroup $P_t$ as in~\eqref{eq:MarkovOnC}.

We will next show that $\mu_*$ concentrates on a path space with moderate growth, {thereby finishing the proof of Theorem~\ref{thm:existence}}.

\begin{lemma} \label{lem:mu_p.concentrate.CpastHsrho} Let $\mu_*$ be the probability measure in $\S(-\infty, 0]$ constructed above. Then, for every $\varrho>0$, $$\mu_*(\C_{\varrho}(-\infty, 0])=1,$$
where $\C_{\varrho}(-\infty, 0]$ is as in \eqref{eqn:CpastR^2}.
\end{lemma}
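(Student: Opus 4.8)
The plan is to show that the stationary process $\xi_* = (x_*, v_*)$, which by construction has one-dimensional marginals distributed according to $\pi \times \prod_\ell \nu_\ell$ (projected appropriately), does not grow faster than $|r|^\varrho$ along its past trajectory, for every $\varrho > 0$. The key observation is that $\widehat\mu$ is stationary, so the law of $X_*(r) = (x_*(r), v_*(r), z_{1,*}(r), \dots)$ is the same measure $\mu$ on $\Hs$ for every $r$; in particular the law of $x_*(r)$ is the $x$-marginal of $\mu$, which is $\propto e^{-\U(x)}\,dx$, a fixed probability measure on $\rbb$ independent of $r$. Since $\U$ satisfies Assumption~\ref{cond:Phi}, $e^{-\U(x)}$ decays at least like $e^{-|x|^{1+\delta}/b + \text{const}}$, so $x_*(r)$ has Gaussian-type (in fact stretched-exponential) tails uniform in $r$: there are constants $c_1, c_2 > 0$ with $\P(|x_*(r)| > \lambda) \le c_1 e^{-c_2 \lambda^{1+\delta}}$ for all $r \le 0$ and all $\lambda > 0$.

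The main step is then a Borel--Cantelli / union-bound argument to upgrade these pointwise-in-time tail bounds to an almost-sure statement about the supremum $\sup_{r \le 0} |x_*(r)|/(1+|r|^\varrho)$. First I would fix $\varrho > 0$ and use the energy estimate from Proposition~\ref{prop:well-posedness} (or a similar Kolmogorov-continuity / moment estimate applied to the stationary solution) to control $\E \sup_{r \in [-n-1, -n]} |x_*(r)|^p$ in terms of $\E H(x_*(-n-1), v_*(-n-1))^{p}$ plus the contribution of the memory term $\big(\int_{-\infty}^{-n-1} K(-n-1-r)|v_*(r)|\,dr\big)^{2}$ and $\E \sup_{[-n-1,-n]} F(r)^2$; by stationarity all of these are finite and \emph{independent of $n$}. (One needs a moment bound rather than just finiteness a.s., so I would take $p$-th moments of the energy estimate, using Assumption~\ref{cond:Phi} to pass from $\U$ to $|x|^{1+\delta}$ and hence to polynomial moments of $|x_*|$.) With a uniform-in-$n$ bound $\E \sup_{r \in [-n-1,-n]} |x_*(r)|^p \le M_p < \infty$ in hand, Chebyshev gives
\begin{align*}
\P\Big( \sup_{r \in [-n-1,-n]} |x_*(r)| > (1 + n^\varrho) \Big) \le \frac{M_p}{(1+n^\varrho)^p},
\end{align*}
and choosing $p$ large enough that $p\varrho > 1$ makes the right-hand side summable in $n$. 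Borel--Cantelli then yields that almost surely, for all but finitely many $n$, $\sup_{r \in [-n-1,-n]} |x_*(r)| \le 1 + n^\varrho \le 1 + |r|^\varrho$ on that interval, and the finitely many exceptional intervals contribute only a finite constant; hence $\sup_{r \le 0} |x_*(r)|/(1+|r|^\varrho) < \infty$ almost surely, i.e.\ $\xi_* \in \C_\varrho(-\infty,0]$ with probability one. Since $\mu_*$ is the law of $(\xi_*, B)$ on $\S(-\infty,0]$, this gives $\mu_*(\C_\varrho(-\infty,0]) = 1$. Finally, because $\varrho > 0$ was arbitrary, the claim holds for every $\varrho > 0$.

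The main obstacle I anticipate is obtaining the uniform-in-$n$ moment bound on $\E \sup_{r \in [-n-1,-n]}|x_*(r)|^p$ with $p$ arbitrarily large. The energy estimate~\eqref{ineq:energy-estimate} is stated for a single first moment of $\sup H$, so I would need to either re-run its proof tracking $p$-th moments, or — more cleanly — exploit that at the single time $-n-1$ the random variable $H(x_*(-n-1), v_*(-n-1))$ has law with all exponential moments finite (it is essentially $\frac12 v^2 + \U(x)$ under $\pi$, and $\int e^{-\U}\,dx < \infty$ forces, say, $\int (1+|x|)^p e^{-\U(x)}\,dx < \infty$ for all $p$), while the memory contribution $\big(\int_{-\infty}^{-n-1} K(-n-1-r)|v_*(r)|\,dr\big)^2$ needs a separate moment estimate: bound it using Assumption~\ref{cond:K:supK(t+s)/K(s)} (shift invariance of the ratio $K(t+s)/K(s)$) together with stationarity of $v_*$ and $K \in L^1$-type control near infinity from Assumption~\ref{cond:K:K(t)<t^-alpha} (here $\alpha > 1/2$ and more importantly $K$ integrable-at-infinity when $\alpha > 1$, but for the existence theorem we are in the setting of Remark~\ref{rem:specialform} where $K(t)\sim t^{-\alpha}$; integrability of $K$ against $|v_*|$ in $L^2$ should follow since $\E|v_*(r)|^2$ is constant and $\int_0^\infty K$ converges when $\alpha>1$, while for $\alpha \le 1$ one should instead use the integration-by-parts form and the fact that $x_*$ has uniformly bounded moments). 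Handling the $\alpha \in (0,1]$ range of the memory moment cleanly — since $K$ itself need not be integrable — is the delicate point; the trick is the same integration-by-parts rewriting used before Proposition~\ref{lem:Q_infinity:equivalent}, converting $\int K(t-r)v_*(r)\,dr$ into $K(0)x_*$ plus $\int K'(t-r)x_*(r)\,dr$ and using the polynomial moment bounds on $x_*$ together with $|K'(t)| \lesssim t^{-\alpha-1+\epsilon}$ to get uniform-in-$n$ moments, exactly as in that earlier argument.
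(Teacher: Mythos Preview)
Your overall scaffolding --- reduce to $\sum_n \P(\sup_{[-n-1,-n]}|x_*|>(1+n)^\varrho)<\infty$ via stationarity and Borel--Cantelli, then Chebyshev with a large power $p$ --- matches the paper exactly. The substantive difference is in how the uniform-in-$n$ moment bound on $\sup_{r\in[-n-1,-n]}|x_*(r)|$ is obtained. You propose to apply the energy estimate~\eqref{ineq:energy-estimate} for the non-Markovian GLE, which drags in moments of the memory integral $\int_{-\infty}^{-n-1}K(-n-1-r)|v_*(r)|\,dr$ and requires redoing Appendix~\ref{sec:well-posed} for $p$th moments. For $\alpha\le 1$ the memory integral with $|v_*|$ need not even be finite (its expectation equals $\sqrt{2/\pi}\int_0^\infty K=\infty$), so you would first have to recast the energy argument in the integration-by-parts form~\eqref{eqn:GLE:K'} before invoking Minkowski and $K'\in L^1$; you anticipate this, but it is real additional work.

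The paper sidesteps all of this by working directly with the \emph{Markovian} system~\eqref{eqn:GLE-Markov:xvz}: It\^o's formula on $H(x,v)$ there gives
\[
dH=-v^2\,dt+dt+v\,dW-\textstyle\sum_\ell\sqrt{c_\ell}\,z_\ell v\,dt,
\]
with no memory integral and no $F$, the entire past being encoded in the auxiliary variables $z_\ell$. The event $\{\sup_{[0,1]}|x|>(n+1)^\varrho\}$ is then split into an event on $\U(x(0))$, one on $v(0)$ together with the martingale $\int_0^\cdot v\,dW$, and one on $\int_0^1 z_\ell(r)^2\,dr$ for each $\ell$ separately; each piece is bounded by Chebyshev against the explicit product measure $\mu=\pi\times\prod_\ell\nu_\ell$, and summability in $\ell$ comes from the $\ell^{-2s}$ weights in $\H_{-s}$. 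This route is specific to the sum-of-exponentials kernel --- precisely the hypothesis of \cref{thm:existence} --- and buys a much shorter argument with no circularity; your route is more general in spirit but here incurs significant extra effort.
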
 

\begin{proof}
By Borel-Cantelli, it suffices to prove that 
\begin{align*}
\sum_{n\geq 1}\widehat{\mu}\bigg\{ X(\cdot)=(\xi, z_1, z_2, \ldots)\in C(\rbb, \H_{-s})\, :\,\sup_{-n\leq r\leq -n+1}|x(r)|>(n+1)^{\varrho}\bigg\}<\infty.
\end{align*}
By invariance
\begin{align*}
\widehat{\mu}\bigg\{ X(\cdot):\sup_{-n\leq r \leq-n+1}|x(r)|>(n+1)^{\varrho}\bigg\}&=\widehat{\mu}\bigg\{\sup_{0\leq t\leq 1}|x_{0}(t)|>(n+1)^{\varrho} \bigg\},
\end{align*}
where  $X_0(\, \cdot \,)= (\xi_0(\, \cdot \, ), z_1( \, \cdot \, ), \ldots)$ denotes the solution of~\eqref{eqn:GLE-Markov:xvz} with initial distribution $\mu$ at time $0$. To estimate the righthand side above, we apply It\^{o}'s formula to the Hamiltonian $H(\xi)=H(x,v)= \frac{1}{2}v^2+ \U(x)$, and obtain for $t\ge 0$
\begin{equation} \label{eq:Ito:Psi(X)}
\begin{aligned}
d H(\xi(t))&= -v(t)^2dt+1dt+v(t)dW(t)-\sum_{\ell\geq 1}\sqrt{c_\ell}z_\ell(t)v(t)dt.
\end{aligned}
\end{equation}
The cross terms involving $z_\ell(t)$ and $v(t)$ can be bounded from above by
\begin{align*}
\sqrt{c_\ell}|z_\ell(t)v(t)|\le  C \ell^{-2s}z_\ell(t)^2+\frac{c_\ell\ell^{2s}}{C} v(t)^2,
\end{align*} 
where $C>0$ is large enough such that $C \textstyle{\sum_{\ell \geq 1}} c_\ell \ell^{2s}<1$.  
Integrating~\eqref{eq:Ito:Psi(X)} on $[0,t]$, $t\le 1$ using the estimates above then produces
\begin{align*}
H(\xi(t))&\leq H(\xi(0))+ 1+ C\int_0^1 \sum_{\ell \geq 1}  \ell^{-2s}z_\ell(s)^2\, ds+\sup_{0\leq t\leq 1}\int_0^tv(r)dW(r). 
\end{align*}
Fixing $\varepsilon\in(1/2,s)$ and recalling Assumption~\ref{cond:Phi}, namely, $U(x)$ dominates $|x|^{1+\delta}$, $\delta\in(0,1)$, we have the following chain of implications
\begin{align*}
&\Big\{\sup_{0\leq t\leq 1}|x_0(t)|\geq (n+1)^{\varrho}\Big\}\subset\Big\{\sup_{0\le t\le 1} U(x_0(t))\ge c(n+1)^{(1+\delta)\varrho}\Big\}\\
&\subset \Big\{H(\xi(0))+1+ C\textstyle{\int_0^1 \sum_{\ell \geq 1}  \ell^{-2s}z_\ell(s)^2\, ds}+\textstyle{\sup_{0\leq t\leq 1}\int_0^tv(r)dW(r)}\geq c(n+1)^{(1+\delta)\varrho}\Big\}.\\
&\subset  \Big\{\U(x(0))\geq c(n+1)^{(1+\delta)\varrho}\}\cup\{\tfrac{1}{2}v(0)^2+1+\textstyle{\sup_{0\leq t\leq 1}\int_0^tv(r)dW(r)}\geq c(n+1)^{(1+\delta)\varrho}\Big\}\\
&\qquad \bigcup_{\ell \geq 1}  \Big\{ C\textstyle{\int_0^1   \ell^{-2s}z_\ell(s)^2\, ds} \geq c (n+1)^{(1+\delta)\varrho}\ell^{-2\varepsilon} \Big\}=I_{x}\cup I_v\cup_{\ell\ge 1}I_\ell.\end{align*}
We are left to estimate each of the above events. For $p>0$, using Chebychev's inequality, we estimate $I_x$ as follows: 
\begin{align*}
\widehat{\mu}(I_x)=\mu\{I_x\}&\leq \frac{c}{(n+1)^{(1+\delta)\varrho p}} \int_{\Hs}\close \U(x)^p\mu(dX)\\
&=\frac{c}{(n+1)^{(1+\delta)\varrho p}}\int_\rbb \U(x)^p e^{-\U(x)}dx\leq \frac{c}{(n+1)^{(1+\delta)\varrho p}}.
\end{align*}
For $I_v$, we first employ Burkholder's inequality to see that
\begin{align*}
\E\sup_{0\leq t\leq 1}\bigg(\int_0^tv(r)dB_0(r)\bigg)^{2p}\leq c\int_0^1\E|v(r)|^{2p}dr.
\end{align*}
Using the fact that $\mu$ is invariant for $X_0(t)$, we estimate $I_v$ 
\begin{align*}
\widehat{\mu}(I_v)
&\leq \frac{c}{(n+1)^{(1+\delta)\varrho p}} \bigg[\int_{\Hs}\close v^{2p}\mu(dX)+\int_0^1\E\int_{\Hs}\close v(r)^{2p}\mu(dX)dr\bigg]\\
&=\frac{c}{(n+1)^{(1+\delta)\varrho p}}\int_{\Hs}\close 2v^{2p}\mu(dX) \\
&=\frac{c}{(n+1)^{(1+\delta)\varrho p}}\int_{\rbb}2v^{2p}e^{-v^2/2}dv\leq \frac{c}{(n+1)^{(1+\delta)\varrho p}}.
\end{align*}
Likewise, for $I_\ell$ we find that  
\begin{align*}
\widehat{\mu} (I_\ell)
&\leq  \frac{c \ell^{2(\varepsilon-s) p} }{(n+1)^{(1+\delta)\varrho p}}\int_0^1\E\int_{\Hs}| z_\ell^2(r)|^{p}\mu(dX)dr\\
&=  \frac{c \ell^{2(\varepsilon-s) p} }{(n+1)^{(1+\delta)\varrho p}}\int_\rbb |z|^{2p}e^{-z^2/2}dz.
\end{align*}
We now collect everything and note that $\varepsilon\in(1/2,s)$ to arrive at
\begin{align*}
\widehat{\mu}\Big\{\sup_{0\leq t\leq 1}|x(t)|\geq (n+1)^{\varrho}\Big\}\leq \frac{c}{(n+1)^{(1+\delta)\varrho p}}\big[1+\sum_{\ell\geq 1} \ell^{2(\varepsilon-s) p}\big]\le \frac{c}{(n+1)^{(1+\delta)\varrho p}},
\end{align*}
which holds for $p$ sufficiently large, e.g., $2(s-\varepsilon)p>1$. Furthermore, we emphasize that the above constant $c$ is independent of $n$. It follows that
\begin{align*}
\sum_{n\geq 1}\widehat{\mu}\bigg\{ X(\cdot)=(\xi, z_1, z_2, \ldots)\in C(\rbb, \H_{-s})\, :\,\sup_{-n\leq r\leq -n+1}|x(r)|>(n+1)^{\varrho}\bigg\}\leq \sum_{n\geq 1}\frac{c}{(n+1)^{(1+\delta)\varrho p}},
\end{align*}
which is summable as long as $p$ is chosen such that $(1+\delta)\varrho p>1$. The proof is thus complete.
 
\end{proof}

\section*{acknowlegement} The authors graciously acknowledge support
from the Department of Mathematics at Duke University and the
Department of Mathematics at Iowa State University.  DPH was supported
in part by NSF Grants DMS-1612898  and DMS-1855504. JCM thanks the NSF
 for its partial suport through the grant DMS-1613337. The authors would like to thank Gustavo Didier for helpful discussions in the development of this work. The authors also would like to thank the anonymous reviewer for their valuable comments and suggestions.

\appendix

\section{Well-posedness} \label{sec:well-posed}

In this section, we show that equation~\eqref{eqn:GLE} is well-posed as stated in Proposition~\ref{prop:well-posedness}. We first construct strong, i.e., pathwise, solutions. Then, the existence and uniqueness of weak solutions simply follow by using classical arguments \cite[Chapter 5]{oksendal2003stochastic}.

First, fixing $T>0$ we consider a slightly different approximating equation
\begin{align}
\label{eqn:GLET}
dx(t)&= v(t) \, dt, \\ 
dv(t)&= -v(t) \, dt - U'(x(t)) \, dt - \int_{-T}^t K(t-s) v(s) \, ds \, dt +  \sqrt{2} \, dW(t) + F(t) \, dt, \nonumber 
\end{align}
where we have truncated the memory term in~\eqref{eqn:GLE} at time $-T$.  Following the standard iteration procedure for standard SDEs with globally Lipschitz coefficients~\cite{bass1998diffusions, oksendal2003stochastic}, we can obtain the well-posedness of relation~\eqref{eqn:GLET} assuming that $U'$ is globally Lipschitz:    
\begin{lemma}
\label{lem:approxT}
Fix $T>0$, $K\in C(\rbb)$ and suppose that $U'$ is globally Lipschitz. Then for all $\xi_0=(x_0, v_0) \in \C(-\infty, 0]$, there exists a unique continuous adapted solution $\xi^T(t)=(x^T(t), v^T(t))$ of equation~\ref{eqn:GLET} for all times $t\geq 0$ with $\xi^T(0)=(x_0(0), v_0(0))$.     
\end{lemma}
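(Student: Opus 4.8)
The plan is to argue pathwise, exploiting the fact that, after truncating the memory at $-T$ and restricting to $t\ge 0$, equation~\eqref{eqn:GLET} is a \emph{random Volterra integral equation with additive noise}: the Brownian motion enters only through the additive term $\sqrt2\,W(t)$ and $F$ only through $\int_0^t F(s)\,ds$, so no stochastic integral needs to be iterated and we may fix a sample point $\omega$ (on the full-probability event where $W(\cdot,\omega)$ is continuous and $F(\cdot,\omega)$ is locally integrable, as is implicit in Definition~\ref{def:infiniteSolution}). First I would split the memory term into a frozen-past part and a causal part: for $t\ge 0$,
$$\int_{-T}^t K(t-s)v(s)\,ds = g(t) + \int_0^t K(t-s)v(s)\,ds, \qquad g(t):=\int_{-T}^0 K(t-s)\,v_0(s)\,ds.$$
Since $K$ is continuous and $v_0$ is continuous (hence bounded) on the compact set $[-T,0]$, the function $g$ is continuous (indeed Lipschitz on each $[0,N]$). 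Absorbing $g$ into the additive forcing, the problem becomes: find $\xi=(x,v)\in C([0,N];\rbb^2)$ with $x(t)=x_0(0)+\int_0^t v(s)\,ds$ and $v(t)=v_0(0)+\int_0^t\big[-v(s)-U'(x(s))-\int_0^s K(s-r)v(r)\,dr\big]ds+\sqrt2\,W(t)+\int_0^t\big(F(s)-g(s)\big)\,ds$, i.e.\ a fixed point of the associated solution map $\Phi$ on $C([0,N];\rbb^2)$, for each $N\in\nbb$.

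Next I would show $\Phi$ is a strict contraction on $C([0,N];\rbb^2)$ equipped with the weighted norm $\|\xi\|_\lambda:=\sup_{0\le t\le N}e^{-\lambda t}|\xi(t)|$ for $\lambda$ large. The terms coming from $-v$ and from $U'$ are Lipschitz (globally, by hypothesis), contributing factors of order $1/\lambda$ in this norm; for the memory term one estimates
$$e^{-\lambda t}\Big|\int_0^t\!\!\int_0^s K(s-r)\big(v_1(r)-v_2(r)\big)\,dr\,ds\Big| \le \Big(\sup_{[0,N]}|K|\Big)\,\|v_1-v_2\|_\lambda\; e^{-\lambda t}\!\int_0^t\!\!\int_0^s e^{\lambda r}\,dr\,ds \le \frac{\sup_{[0,N]}|K|}{\lambda^2}\,\|v_1-v_2\|_\lambda,$$
which is also $O(1/\lambda)$ (here causality, i.e.\ the inner integral running only to $s\le t$, together with boundedness of $K$ on compact time intervals, is what makes the nonlocal-in-time term harmless). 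Choosing $\lambda=\lambda(N,K,U')$ large enough, the Banach fixed point theorem yields a unique $\xi^T\in C([0,N];\rbb^2)$ with $\xi^T(0)=(x_0(0),v_0(0))$. Uniqueness on each $[0,N]$ forces the solutions on different intervals to be restrictions of one another, so they patch into a unique continuous solution on $[0,\infty)$. Finally, adaptedness: starting the Picard iteration from the constant path $(x_0(0),v_0(0))$, each iterate at time $t$ is a measurable functional of $(x_0(0),v_0(0))$, $g|_{[0,t]}$, $W|_{[0,t]}$ and $F|_{[0,t]}$, and this property survives the uniform limit, so $\xi^T(t)$ is $\mathcal{F}_t$-measurable. (One may alternatively replace the weighted-norm contraction by a plain Picard iteration with the usual $(CN)^k/k!$-type bounds on successive differences, plus a Gronwall argument for uniqueness; either route is routine.)

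I do not expect a genuine obstacle here — this is the classical Volterra/Picard scheme — and the only two points that deserve a line of care are (i) the continuity of the frozen-past forcing $g$, which follows immediately from continuity of $K$, compactness of $[-T,0]$, and dominated convergence, and (ii) the nonlocal-in-time memory term, handled as above by causality and the boundedness of $K$ on compacts. Uniqueness among continuous adapted solutions is built into the fixed-point statement on each $[0,N]$, so no separate uniqueness argument is required.
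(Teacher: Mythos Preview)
Your proposal is correct and is precisely the standard Picard/contraction argument that the paper invokes by reference: the paper does not write out a proof of this lemma but simply says ``Following the standard iteration procedure for standard SDEs with globally Lipschitz coefficients~[bass1998diffusions, oksendal2003stochastic], we can obtain the well-posedness of relation~\eqref{eqn:GLET}.'' Your write-up is a faithful and careful execution of exactly that scheme --- splitting off the frozen-past forcing $g$, treating the remaining equation as a causal Volterra integral equation with additive noise, and contracting in a weighted sup-norm --- so there is nothing to compare; you have supplied the details the paper elides.
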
  
In order to remove the globally Lipschitz hypothesis in Lemma~\ref{lem:approxT}, we use an energy estimate to show absence of explosion under the assumption that $U' \in C^1(\rbb)$ with $U' \rightarrow \infty$ as $|x|\rightarrow \infty$.

 \begin{lemma}
\label{lem:noexpT}
Fix $T>0$, $K\in C(\rbb)$ and suppose Assumption~\ref{cond:K:supK(t+s)/K(s)} holds.  Furthermore, suppose that $U'$ in equation~\eqref{eqn:GLET} satisfies $\U' \rightarrow \infty$ as $|x|\rightarrow \infty$.  Then for all $\xi_0=(x_0, v_0) \in \C(-\infty,0]$, there exists a unique continuous solution $\xi^T(t)=(x^T(t), v^T(t))$ of equation~\eqref{eqn:GLET} for all times $t\geq 0$ with $\xi^T(0)=(x_0(0), v_0(0))$.
\end{lemma}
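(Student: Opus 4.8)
The plan is to upgrade \cref{lem:approxT} to the non-Lipschitz setting by the standard localization-plus-energy-estimate scheme for stochastic differential equations, the only non-routine ingredient being the convolution memory term. Fix $\xi_0=(x_0,v_0)\in\C(-\infty,0]$ and recall that \cref{cond:K:supK(t+s)/K(s)} gives $K\in C^1([0,\infty);[0,\infty))$, so $K\ge 0$ and $K$ is bounded on each compact interval; recall also that the stationary Gaussian process $F$ with $C^1$ covariance has continuous sample paths and, by the technical estimate proved later in the appendix, satisfies $\E\sup_{0\le r\le t}F(r)^2<\infty$ for every $t$.

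\emph{Localization.} For each $R>0$ I would choose $\U_R\in C^1(\rbb)$ with $\U_R'=\U'$ on $[-R,R]$ and $\U_R'$ globally Lipschitz (for instance $\U_R'$ constant for $|x|\ge R+1$ and smoothly interpolated on $R\le|x|\le R+1$, which is possible since $\U'\in C^1$). Then \cref{lem:approxT}, applied with $\U'$ replaced by $\U_R'$, furnishes a unique continuous adapted solution $\xi^{T,R}=(x^{T,R},v^{T,R})$ of the correspondingly modified \eqref{eqn:GLET} with initial past $\xi_0$ and $\xi^{T,R}(0)=(x_0(0),v_0(0))$. Set $\tau_R=\inf\{t\ge0:|x^{T,R}(t)|\ge R\}$. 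Because the drifts for $\U_R$ and $\U_{R'}$ coincide on $\{|x|\le R\wedge R'\}$, pathwise uniqueness forces $\xi^{T,R}=\xi^{T,R'}$ on $[0,\tau_R\wedge\tau_{R'}]$ whenever $R\le R'$; hence $R\mapsto\tau_R$ is nondecreasing and the $\xi^{T,R}$ patch together into a continuous adapted process $\xi^T$ on $[0,\tau_\infty)$, $\tau_\infty:=\sup_R\tau_R$, which solves the untruncated \eqref{eqn:GLET} on $[0,\tau_R)$ since there $|x^T|<R$. It remains to show $\tau_\infty=\infty$ almost surely, from which global existence is immediate.

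\emph{Energy estimate.} Next I would apply It\^o's formula to $H(x,v)=\tfrac12 v^2+\U(x)$ along $\xi^T$ on $[0,t\wedge\tau_R]$ (legitimate since on $[0,\tau_R)$ the drift coincides with that of \eqref{eqn:GLET}); the $\U'(x)v$ contributions cancel and one gets
\begin{align*}
H(\xi^T(t\wedge\tau_R))&=H(x_0(0),v_0(0))-\int_0^{t\wedge\tau_R}\! v(r)^2\,dr-\int_0^{t\wedge\tau_R}\! v(r)\!\int_{-T}^{r}\! K(r-s)v(s)\,ds\,dr\\
&\qquad+\int_0^{t\wedge\tau_R}\! v(r)F(r)\,dr+\sqrt2\int_0^{t\wedge\tau_R}\! v(r)\,dW(r)+(t\wedge\tau_R).
\end{align*}
For the memory term I would split $\int_{-T}^r=\int_{-T}^0+\int_0^r$. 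The past piece is deterministic given $\xi_0$ and, by \cref{cond:K:supK(t+s)/K(s)} and $\xi_0\in\C(-\infty,0]$, bounded by $\widetilde{K}(r)\int_{-\infty}^0 K(-s)|v_0(s)|\,ds$; paired with $v(r)$ and using Young's inequality it costs at most $\tfrac12 v(r)^2$ plus a finite constant depending on $(\xi_0,T)$. The forward piece is dominated, via $K\ge0$ and $2|v(r)v(s)|\le v(r)^2+v(s)^2$, by $\tfrac12\int_0^r K(r-s)(v(r)^2+v(s)^2)\,ds$; integrating in $r$, applying Fubini and using $\sup_{[0,t+T]}K<\infty$ leaves a term of the form $C(T,t)\int_0^t\sup_{0\le u\le s}v(u)^2\,ds$, of Gr\"onwall shape. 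The $F$-term is at most $\tfrac12\int_0^t v(r)^2\,dr+\tfrac{t}{2}\sup_{0\le r\le t}F(r)^2$, and the martingale is controlled by Burkholder--Davis--Gundy after taking the supremum over $t$. Since $\U$ is bounded below, $\tfrac12 v^2\le H+\mathrm{const}$, so after taking expectations all the $\int v^2$ contributions can be absorbed into $\int_0^t\E\sup_{[0,s\wedge\tau_R]}H\,ds$ (the leading $-\int v^2$ being discarded), and Gr\"onwall yields $\E\sup_{0\le r\le t\wedge\tau_R}H(\xi^T(r))\le C(T,t,\xi_0)$ with the constant independent of $R$.

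\emph{No explosion, uniqueness, and the hard step.} On $\{\tau_R\le t\}$ continuity gives $|x^T(\tau_R)|=R$, hence $H(\xi^T(\tau_R))\ge m_R:=\inf_{|y|=R}\U(y)$, and $m_R\to\infty$ as $R\to\infty$ by the confining hypothesis on $\U$. Combining with the uniform bound, $m_R\,\P(\tau_R\le t)\le C(T,t,\xi_0)+\mathrm{const}$, so $\P(\tau_R\le t)\to0$, giving $\P(\tau_\infty\le t)=0$ for every $t$ and $\tau_\infty=\infty$ almost surely; thus $\xi^T$ is a continuous solution of \eqref{eqn:GLET} on $[0,\infty)$. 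Uniqueness follows by the same localization: two solutions agree on $[0,\sigma_R]$, with $\sigma_R$ the first exit of either from $[-R,R]$, by uniqueness of the $\U_R$-truncated equation, and $\sigma_R\to\infty$ by the no-explosion bound applied to each. The main obstacle is precisely the memory term in the energy identity: one must split it at time $0$, tame the initial-past contribution through \cref{cond:K:supK(t+s)/K(s)} and $\xi_0\in\C(-\infty,0]$, rearrange the forward convolution via Fubini into Gr\"onwall form, and make sure the resulting estimate is uniform in the localization parameter $R$ (with the $F$-term, hence the appendix bound $\E\sup_{[0,t]}F(r)^2<\infty$, entering without circularity).
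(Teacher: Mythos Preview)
Your proposal is correct and follows essentially the same approach as the paper: localize the potential, apply It\^o's formula to $H=\tfrac12 v^2+\U(x)$, split the memory integral at time $0$ (using \cref{cond:K:supK(t+s)/K(s)} to control the past piece and a Fubini/sup-$K$ bound for the forward piece), handle the $F$-term by Young and the martingale by BDG/Doob, and close by Gr\"onwall and the confining property of $\U$. The only cosmetic differences are that the paper localizes via a cutoff $\theta_n$ on $\U$ with stopping time $\sigma_n=\inf\{t:H\ge n\}$ rather than your $\tau_R=\inf\{t:|x|\ge R\}$, and you spell out the uniqueness argument more explicitly.
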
 

\begin{proof}
Recalling $\theta_n$ as in~\eqref{eqn:GLE:truncating:memory}, let $H_n(x,v)= \frac{1}{2}v^2+ \U(x)\theta_n(x)$.  Define $U_n:\rbb\rightarrow \rbb$ by $U_n(x) = \U(x) \theta_n(x)$ and note that the system~\eqref{eqn:GLET} with $U'$ replaced by $U_n'$ has unique solutions $(x_n(t), v_n(t))$ as in Lemma~\ref{lem:approxT} with $(x_n(0), v_n(0))=\xi(0) \in \rbb^2$.  Furthermore, these solutions agree with the solutions of equation~\eqref{eqn:GLET} for all times $t<\sigma_n:= \inf\{ t\geq 0 \, : \, H(x_n(t), v_n(t))\geq n \}$ where $H$ is the Hamiltonian. Now, fix $t>0$ and note that It\^{o}'s formula implies     
\begin{align*}
\E\sup_{r \in [0, t]} H_n(x_n(r), v_n(r))&\leq  H(\xi(0))+\E \sup_{r\in [0, t]} \int_0^r \bigg\{|v_n(u)| \int_{-T}^u K(u-w) |v_n(w)|\, dw\bigg\} \,du \\
&\qquad + \sqrt{2} \E \sup_{r\in [0,t]} \bigg|\int_0^rv_n(u) dW(u)\bigg|+ \E \sup_{r\in [0,t]} \bigg|\int_0^r v_n(u) F(u) \, du\bigg|\\
&=: H(\xi(0))+(I)_t +(II)_t+(III)_t.\end{align*}  
For the term $(I)_t$, we note that Assumption~\ref{cond:K:supK(t+s)/K(s)} gives
\begin{align*}
& \int_0^r \bigg\{|v_n(u)| \int_{-T}^u K(u-w) |v_n(w)|\, dw\bigg\} \,du\\
&=   \int_0^r \bigg\{|v_n(u)| \int_{-T}^0 \frac{K(u-w)}{K(-w)} K(-w) |v_n(w)|\, dw\bigg\} \, du + \int_0^r \bigg\{|v_n(u)| \int_{0}^u K(u-w) |v_n(w)|\, dw\bigg\} \,du \\
&\leq \int_0^r |v_n(u)| \Ktilde(u) \int_{-T}^0 K(-w) |v_0(w)| \, dw \, du + \int_0^r \sup_{w\in [0, u]} |v_n(w)|^2 \int_0^u K(u-w) \, dw \, du.  
\end{align*}   
Hence we can estimate $(I)_t$ as
\begin{align*}
(I)_t \leq  \int_0^t c_1(r) \E \sup_{s \in [0, r]} |v_n(s)|^2 \, dr  + c_2(t)\bigg(\int_{-T}^0 K(-w) |v_0(w)| \, dw\bigg)^2,
\end{align*}
for some continuous functions $c_i$ on $[0,t]$.  For the term $(II)_t$, note that Doob's Maximal Inequality implies
\begin{align*}
(II)_t=\sqrt{2} \E \sup_{r\in [0,t]} \bigg|\int_0^rv_n(u) dW(u)\bigg|\leq c\bigg(1+\int_0^t\E \sup_{u\in [0,r]}|v_n(u)|^2dr\bigg). \end{align*}
Concerning $(III)_t$, we use Young's inequality for products to obtain
\begin{align*}
(III)_t\le \tfrac{1}{2}\int_0^t \E \sup_{u\in [0,r]}|v_n(u)|^2dr+\tfrac{1}{2}t\E \sup_{r\in [0,t]}|F(r)|^2.
\end{align*} 
We collect the estimates above to arrive at the bound
\begin{align*}
\E\sup_{r \in [0, t]} H_n(x_n(r), v_n(r))&\leq  H_n(\xi(0))+c_1(t)\int_0^t \E\sup_{u \in [0, r]} H_n(x_n(u), v_n(u))dr\\
&\qquad+c_2(t) \bigg(\int_{-T}^0 K(-w) |v_0(w)| \, dw\bigg)^2+\tfrac{1}{2}t\E \sup_{r\in [0,t]}|F(r)|^2+c_3(t),
\end{align*}
whence using Gr\"{o}nwall's inequality and the Monotone Convergence Theorem
\begin{align} \label{ineq:E.sup_(0<r<t)H(x_n(r), v_n(r))}
&\E\sup_{r \in [0, t]} H(x_n(r), v_n(r))\\
&\le \bigg(H(x_0(0), v_0(0))+\bigg(\int_{-T}^0 K(-w) |v_0(w)| \, dw\bigg)^2+ \E \sup_{r\in [0,t]}|F(r)|^2+1   \bigg)e^{c(t)},\nonumber                                                                                                                                                                                                                                                                                                                                                                                                                                                                                                                                                                                                                                                                                                                                                                                                                                                                                                                                                                                                                                                                                                                                                                                                                                                                                                                                                                                                                                                                                                                                                                                                                                                                                                                                                                                                                                                                                                                                                                                                                                                                                                                                                                                                                                                                                                                                                                                                                                                                                                                                                                                                                                                                                                                                                                                                                                                                                                                                                                                                                                                                                                                                                                                                                                                                                                                                                                                                                                                                                                                                                                                                                                                                                                                                                                                                                                                                                                                                                                                                                                                                                                                                                                                                                                                                                                                                                                                                                                                                                                                                                                                                                                                                                                                                                                                                                                                                                                                                                                                                                                                                                                                                                                                                                                                                                                                                                                                                                                                                                                                                      \end{align}

Turning back to $\sigma_n$, we note that
\begin{align} \label{ineq:P(sigma_n<t)<H(x,v)}
\E\sup_{r \in [0, t]} H_n(x_n(r), v_n(r))&\ge \E\bigg[\sup_{r \in [0, t]} H_n(x_n(r), v_n(r))\cdot 1\{\sigma_{n-1} < t\}\bigg]\\
&\ge (n-1)\P(\sigma_{n-1}<t),\nonumber
\end{align}
which together with~\eqref{ineq:E.sup_(0<r<t)H(x_n(r), v_n(r))} yields
$
\P(\sigma_n<t)\le \frac{1}{n}c(t).
$
By taking $n$ to infinity, we immediately obtain $\P(\sigma_\infty<t)=0$ for any $t \geq 0$.  Hence $\P(\sigma_\infty = \infty)=1$, finishing the proof.  
\end{proof}

Our next goal is to allow the memory to depend on the infinite past by carefully passing $T$ to infinity in~\eqref{eqn:GLET}.  
\begin{lemma} \label{lem:well-posed:pathwise}
Let $T>0$, $\xi_0=(x_0, v_0) \in \C(-\infty,0]$ and suppose $K$ satisfies Assumption~\ref{cond:K:supK(t+s)/K(s)}.  Suppose $\U \in C^1(\rbb)$ is such that $\U(x)\rightarrow \infty$ as $|x|\rightarrow \infty$, and let $\xi^T(t)=(x^T(t), v^T(t))$ denote the solution of equation~\eqref{eqn:GLET} with $\xi^T(0)= \xi_0(0)$.  Then for any $t>0$, the solution $\xi^T$ converges as $T\rightarrow \infty$ to $\xi$ in $C([0, t], \rbb^2)$.  Furthermore, $\xi$ is the unique pathwise solution of~\eqref{eqn:GLE} with $\xi(0)=\xi_0(0).$    
\end{lemma}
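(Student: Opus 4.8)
The plan is to realize $\xi$ as the $T\to\infty$ limit of the finite-memory approximations $\xi^{T}$ and to obtain uniqueness from a Gronwall comparison; the only genuine difficulty is that $\U'$ is not globally Lipschitz, so a priori bounds must be inserted before any comparison estimate can be run.

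\emph{Comparing two truncations.} First I would fix $0<T<T'$ and set $(\bar x,\bar v):=\xi^{T'}-\xi^{T}$. Since the two instances of~\eqref{eqn:GLET} share the same $W$, the same $F$ and the same initial past $\xi_0$ on $(-\infty,0]$, the noise terms cancel and $(\bar x,\bar v)$ solves, for each fixed realization, the random linear ODE $\dot{\bar x}=\bar v$, $\dot{\bar v}=-\bar v-\bigl(\U'(x^{T'})-\U'(x^{T})\bigr)-g_{T,T'}-\int_{0}^{t}K(t-s)\bar v(s)\,ds$ with zero data at $t=0$, where the extra slice of memory contributes the deterministic forcing $g_{T,T'}(t):=\int_{-T'}^{-T}K(t-s)v_0(s)\,ds=\int_{T}^{T'}K(t+u)v_0(-u)\,du$. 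By \cref{cond:K:supK(t+s)/K(s)} we have $K(t+u)\le\widetilde{K}(t)K(u)$, so $|g_{T,T'}(t)|\le\widetilde{K}(t)\,\epsilon_T$ with $\epsilon_T:=\int_T^{\infty}K(u)|v_0(-u)|\,du$; and $\epsilon_T\to0$ because $\xi_0\in\C(-\infty,0]$ forces $\int_0^{\infty}K(u)|v_0(-u)|\,du=\int_{-\infty}^0 K(-w)|v_0(w)|\,dw<\infty$.

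\emph{An energy estimate on a bounded event, and the Cauchy property.} Fix $t>0$ and $N\ge1$ and work on the event $\Omega_N:=\{\sup_{0\le r\le t}(|\xi^{T}(r)|\vee|\xi^{T'}(r)|)\le N\}$, on which $\U'$ has a finite Lipschitz constant $L_N$ (available under \cref{cond:Phi}). A standard computation for $\tfrac{1}{2}(\bar x^2+\bar v^2)$ — Young's inequality on the $g_{T,T'}$ and cross terms, and $(\int_0^r K(r-s)\bar v(s)\,ds)^2\le(\int_0^t K)\int_0^r K(r-s)\bar v(s)^2\,ds$ on the memory term — produces a differential inequality whose integrated form, after Gronwall, gives $\sup_{0\le r\le t}|\xi^{T'}(r)-\xi^{T}(r)|^2\le C_{N,t}\,\epsilon_T^2$ on $\Omega_N$, with $C_{N,t}$ deterministic and depending only on $L_N$, $t$, $\int_0^t K$ and $\sup_{[0,t]}\widetilde{K}$. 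To bound $\P(\Omega_N^c)$ I would invoke the energy estimate underlying \cref{lem:noexpT} (letting the auxiliary truncation of $\U$ there go to infinity): it gives $\E\sup_{0\le r\le t}H(x^{T}(r),v^{T}(r))\le C(t,\xi_0)$ with $C(t,\xi_0)$ \emph{independent of $T$}, since $\int_{-T}^0 K(-w)|v_0(w)|\,dw\le\int_{-\infty}^0 K(-w)|v_0(w)|\,dw<\infty$ and $\E\sup_{[0,t]}|F|^2<\infty$ (finite because $K\in C^1$; cf.\ the remark after \cref{prop:well-posedness}). As $\U$ is continuous with $\U(x)\to\infty$, it is bounded below and $h(N):=\inf\{H(x,v):|x|+|v|\ge N\}\to\infty$, so $\P(\sup_{[0,t]}|\xi^{T}|\ge N)\le C(t,\xi_0)/h(N)$ uniformly in $T$. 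Combining, $\P(\sup_{[0,t]}|\xi^{T'}-\xi^{T}|>\eta)\le 2C(t,\xi_0)/h(N)$ whenever $C_{N,t}\epsilon_T^2\le\eta^2$; choosing $N$ large first and then $T$ large shows $\{\xi^{T}\}$ is Cauchy in probability in $C([0,t];\rbb^2)$ (and, using the coercivity of $\U$ from \cref{cond:Phi} and Hölder's inequality as in the proof of \cref{lem:GLE:truncating:memory}, even in $L^1(\Omega;C([0,t];\rbb^2))$). Hence $\xi^{T}$ converges in probability (and a.s.\ along a subsequence) to some $\xi\in C([0,t];\rbb^2)$; these limits are consistent in $t$, giving $\xi\in C([0,\infty);\rbb^2)$, which I extend by $\xi_0$ on $(-\infty,0]$.

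\emph{The limit solves~\eqref{eqn:GLE}, and uniqueness.} Passing to the limit along an a.s.-convergent subsequence $T_n\to\infty$ in the integral form of~\eqref{eqn:GLET}, the memory term $\int_{-T_n}^0 K(\cdot-s)v_0(s)\,ds+\int_0^{\cdot}K(\cdot-s)v^{T_n}(s)\,ds$ converges to $\int_{-\infty}^0 K(\cdot-s)v_0(s)\,ds+\int_0^{\cdot}K(\cdot-s)v(s)\,ds$ — by dominated convergence for the infinite-past integral (finite, again by \cref{cond:K:supK(t+s)/K(s)}) and by uniform convergence together with $\int_0^t K<\infty$ for the finite part — while $\int_0^{\cdot}\U'(x^{T_n})\,ds\to\int_0^{\cdot}\U'(x)\,ds$ by continuity of $\U'$ and uniform convergence on $[0,t]$; one also checks $\int_{-\infty}^t K(t-r)|v(r)|\,dr<\infty$, i.e.\ $\xi\in\C(-\infty,t]$ for every $t\ge0$. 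For uniqueness, two pathwise solutions of~\eqref{eqn:GLE} with initial past $\xi_0$ and the same $(W,F)$ have a difference solving a pathwise linear ODE with zero data (the infinite-past part of the memory, the Brownian term and $F$ all cancel), so Gronwall — with the local Lipschitz modulus of $\U'$ on the range of the two bounded solutions — forces them to coincide on every $[0,t]$. The step I expect to be the main obstacle is precisely controlling the superlinearity of $\U'$: the comparison bound is valid only on $\Omega_N$ and $C_{N,t}\to\infty$ as $N\to\infty$, so the argument closes only because the uniform-in-$T$ energy estimate lets one fix $N$ first (making $\P(\Omega_N^c)$ small uniformly in $T$) and send $T\to\infty$ afterwards, where $\epsilon_T\to0$ at that fixed $N$.
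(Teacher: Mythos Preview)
Your proposal is correct and follows essentially the same strategy as the paper: localize where $\U'$ is Lipschitz, run a Gronwall comparison there, and use the uniform-in-$T$ energy estimate from \cref{lem:noexpT} to control the probability of leaving the localized region; then choose the localization level first and send $T\to\infty$ second. The only cosmetic difference is that the paper implements the localization by introducing the auxiliary truncated-potential processes $\xi_n^{T_i}$ (with $U_n=U\theta_n$) together with the stopping times $\sigma_n^{T_i}$, whereas you work directly on the event $\Omega_N$; both lead to the same two-step Cauchy argument and the same H\"older/coercivity trick (via \cref{cond:Phi}) to upgrade to $L^1$ convergence in $C([0,t];\rbb^2)$.
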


\begin{proof}
Let $t>0$.  Uniqueness of solutions and the fact that the presumed limit solves~\eqref{eqn:GLE} both follow almost immediately once we show that an appropriate approximating sequence is Cauchy in $C([0, t]; \rbb^2)$.  To be more precise, for $T_1 \geq T_2 >0$, let $ \xi^{T_1}_n=(x_n^{T_1},v_n^{T_1})$ and $\xi^{T_1}_n=(x_n^{T_2},v_n^{T_2})$ respectively be the solutions of~\eqref{eqn:GLET} with $U'(x)$ being replaced by $U_n'(x)$ where $U_n(x)=\U(x) \theta_n(x)$ as in the proof of Lemma~\ref{lem:noexpT}.  For simplicity, let $\xibar_n=\xi^{T_1}_n-\xi^{T_2}_n= (\xbar_n, \vbar_n)$ and observe that
\begin{align*}
|\xbar_n(t)|+|\vbar_n(t)|&\le 2\int_0^t | \vbar_n(r)|dr+\int_0^t\big|U_n(x_n^{T_1}(r))-U_n(x_n^{T_2}(r)\big|dr\\
&\qquad+\int_0^t\int_{-T_1}^{-T_2}K(r-u)|v_0(u)|dudr+\int_0^t\int_0^r K(r-u)|\vbar_n(u)|dudr.
\end{align*}
Note that by Assumption~\ref{cond:K:supK(t+s)/K(s)},
\begin{align*}
\int_0^t\int_{-T_1}^{-T_2}K(r-u)|v_0(u)|dudr&= \int_0^t\int_{-T_1}^{-T_2}\frac{K(r-u)}{K(u)}K(u)|v_0(u)|dudr\\
& \le \int_0^t \Ktilde(r)dr\cdot \int_{-T_1}^{-T_2}K(u)|v_0(u)|du.
\end{align*}
Using the fact that $U_n$ is Lipschitz we then obtain 
\begin{align*}
\sup_{0\le r\le t}|\xbar_n(r)|+|\vbar_n(r)|&\le c(t,n)\int_0^t \sup_{0\le u\le r}|\xbar_n(u)|+|\vbar_n(u)|dr+c(t)\int_{-T_1}^{-T_2}K(u)|v_0(u)|du.
\end{align*}
Thus Gr\"{o}nwall's inequality gives
\begin{align} \label{ineq:sup_(0<r<t)|xbar_n(r)|+|vbar_n(r)|}
\sup_{0\le r\le t}|\xbar_n(r)|+|\vbar_n(r)|\le e^{c(n,t)}\int_{-T_1}^{-T_2}K(u)|v_0(u)|du.
\end{align}
Next, let $\sigma^{T_1}_n$ and $\sigma^{T_2}_n$ respectively denote the stopping times associated with $\xi^{T_1}_n(t)$ and $\xi^{T_2}_n(t)$ as in the proof of Lemma~\ref{lem:noexpT}. Setting $\xibar(t)=\xi^{T_1}(t)-\xi^{T_2}(t)$ we find that 
\begin{align*}
\E\bigg[\sup_{0\le r\le t}|\xbar(r)|+|\vbar(r)|\bigg]&\le\E\bigg[1\{\sigma^{T_1}_n\wedge\sigma^{T_2}_n\ge t\}\sup_{0\le r\le t}|\xbar(r)|+|\vbar(r)| \bigg]\\
&\qquad+\E\bigg[1\{\sigma^{T_1}_n\le t\}\sup_{0\le r\le t}|\xbar(r)|+|\vbar(r)| \bigg]\\
&\qquad+\E\bigg[1\{\sigma^{T_2}_n\le t\}\sup_{0\le r\le t}|\xbar(r)|+|\vbar(r)| \bigg]\\
&=(I)_t+(II)_t+(III)_t.
\end{align*}
In view of~\eqref{ineq:sup_(0<r<t)|xbar_n(r)|+|vbar_n(r)|}, we have
\begin{align*}
(I)_t\le \E\sup_{0\le r\le t}|\xbar_n(r)|+|\vbar_n(r)|\le e^{c(n,t)}\int_{-T_1}^{-T_2}K(u)|v_0(u)|du.
\end{align*}
Concerning $(II)_t$, we use Holder's inequality and Assumption~\ref{cond:Phi} to infer the bound
\begin{align*}
(II)_t&\le \bigg(\E\bigg[\sup_{0\le r\le t}|\xbar(r)|+|\vbar(r)|\bigg]^{1+\delta}\bigg)^\frac{1}{1+\delta}\Big(\P\Big(\sigma^{T_1}_n\le t\Big)\Big)^{\frac{\delta}{1+\delta}}\\
&\le c\bigg(1+\E\sup_{0\le r\le t}|\xbar(r)|^{1+\delta}+|\vbar(r)|^2\bigg)^\frac{1}{1+\delta}\Big(\P\Big(\sigma^{T_1}_n\le t\Big)\Big)^\frac{\delta}{1+\delta}\\
&\le c\bigg(1+\E\Big[\sup_{0\le r\le t}H\big(x^{T_1}(r),v^{T_1}(r)\big)+H\big(x^{T_2}(r),v^{T_2}(r)\big)\Big]\bigg)^\frac{1}{1+\delta}\Big(\P\Big(\sigma^{T^1}_n\le t\Big)\Big)^\frac{\delta}{1+\delta}\\
&\le  c\bigg(1+\E\Big[\sup_{0\le r\le t}H\big(x^{T_1}(r),v^{T_1}(r)\big)+H\big(x^{T_2}(r),v^{T_2}(r)\big)\Big]\bigg)^\frac{1}{1+\delta}\cdot \frac{c(t)}{n^{\delta/(1+\delta)}}\le \frac{c(t)}{n^{\delta/(1+\delta)}}.
\end{align*}
In the above estimate, we employed~\eqref{ineq:P(sigma_n<t)<H(x,v)} together with~\eqref{ineq:E.sup_(0<r<t)H(x_n(r), v_n(r))}. Likewise,  \begin{align*}
(III)_t&\le \frac{c(t)}{n^{\delta/(1+\delta)}}.
\end{align*} 
Altogether, we arrive at the bound
\begin{align*}
\E\sup_{0\le r\le t}|\xbar(r)|+|\vbar(r)|&\le e^{c(n,t)}\int_{-T_1}^{-T_2}K(u)|v_0(u)|du+\frac{1}{n^{\delta/(1+\delta)}}c(t).
\end{align*}
Thanks to the assumption that $\xi_0\in\C(-\infty,0]$, it is now clear that $\{\xi^{T}\}$ is a Cauchy sequence in $C([0,t];\rbb^2)$ by first taking $n$ sufficiently large and then sending $T_1$ and $T_2$ to infinity. As a consequence, there exists a solution $\xi$ for~\eqref{eqn:GLE} with the initial condition $\xi_0\in\C$. 

Turning to the uniqueness of $\xi$, it suffices to show that if $\widetilde{\xi}$ solves~\eqref{eqn:GLE} with the same initial path $\xi_0$, then $\xi$ and $\xitilde$ must agree a.s. in $[0,t]$. To see this, consider the stopping times $\sigma_n$ and $\sigmatilde_n$ associated with $\xi$ and $\xitilde$ respectively. Similarly to the above existence part, denoting $\xihat=\xi-\xitilde$, we observe that for $0\le t\le \sigma_n\wedge\sigmatilde_n$, $\xi$ and $\xitilde$ both solve equation~\eqref{eqn:GLE} with $U'$ being replaced by $U_n'(x)$. So that, for $0\le t\le \sigma_n\wedge\sigmatilde_n$, $\xihat$ satisfies $\xihat(0)=0$ and
\begin{align*}
\frac{d}{dt}\xhat(t)&=\vhat(t),\\
\frac{d}{dt}\xhat(t)&=-\vhat(t)-\big[U_n'(x(t))-U_n'(\xtil(t))\big]-\int_0^t K(t-r)\vhat(r)dr.
\end{align*}
Since the nonlinear term is Lipschitz, by Gronwall's inequality, we immediately obtain
\begin{align*}
\E \bigg[1\{\sigma_n\wedge\sigmatilde_n\ge t\}\sup_{0\le r\le t} |\xhat(r)|+|\vhat(r)|\bigg]=0.
\end{align*}
On the other hand, similar to the estimate of $(II)_t$ above, we also have the bound
 \begin{align*}
 &\E \bigg[\Big(1\{\sigma_n\le  t\}+1\{\sigmatilde_n\le  t\}\Big)\sup_{0\le r\le t} |\xhat(r)|+|\vhat(r)|\bigg]\\
 &\le c\bigg(1+\E\Big[\sup_{0\le r\le t}H\big(x(r),v(r)\big)+H\big(\xtil(r),\vtil(r)\big)\Big]\bigg)^{1/(1+\delta)}\cdot \frac{1}{n^{\delta/(1+\delta)}}c(t)\le \frac{1}{n^{\delta/(1+\delta)}}c(t).
 \end{align*}
By taking $n$ large, we observe that $\E \sup_{0\le r\le t} |\xhat(r)|+|\vhat(r)|$ is arbitrarily small, forcing \begin{align*}
\E \sup_{0\le r\le t} |\xhat(r)|+|\vhat(r)|=0,
\end{align*} holds true. The proof is thus complete.
\end{proof}

Given the strong solutions constructed above, we are now ready to give the proof of Proposition~\ref{prop:well-posedness}. The argument is relatively short and can be found in previous works (see, for example, \cite{oksendal2003stochastic}).

\begin{proof}[Proof of Proposition~\ref{prop:well-posedness}]
The existence of weak solution is clear since we already constructed strong solutions as in Lemma~\ref{lem:well-posed:pathwise}. It remains to show weak uniqueness.

Suppose $(\xi,F,W)$ and $(\xitilde,\Ftilde,\Wtilde)$ are two weak solutions as in Definition~\ref{def:initialSolution} on the interval $[t_0,t]$ with the same initial condition $\xi_0$. By the uniqueness of strong solutions, we may consider $\xi$ and $\xitilde$ as the unique path-wise solutions given $(F,W)$ and $(\Ftilde,\Wtilde)$, respectively. To see that $\xi$ and $\xitilde$ have the same law, we recall the construction of $\xi$ starting from system~\eqref{eqn:GLET} with $U'$ being Lipschitz. Then, it is clear that the processes $\xi^T$ and $\xitilde^{\,T}$ as in Lemma~\ref{lem:approxT} must agree in distribution \cite[Lemma 5.3.1]{oksendal2003stochastic}. In view of Lemma~\ref{lem:noexpT}, this property also holds for general $U$ satisfying Assumption~\ref{cond:Phi}. Finally, since $\xi^T$ and $\xitilde^{\,T}$ respectively converge to $\xi$ and $\xitilde$ on $C([t_0,t];\rbb^2)$ as $T\to\infty$, cf. proof of Lemma~\ref{lem:well-posed:pathwise}, we immediately establish the equality in law for $\xi$ and $\xitilde$, thereby concluding the uniqueness of weak solutions.
\end{proof}

\section{Bound on the expected maximum of $F(t)^2$} \label{appendix:F}\label{sec:F} In this section, we will show that under the condition that the autocorrelation $K$ is continuously differentiable, the corresponding stationary process $F(t)$ must satisfy the supremum bound~\eqref{ineq:E.sup.F_t^2<infty}. Thanks to stationarity, it suffices to prove~\eqref{ineq:E.sup.F_t^2<infty} holds for the time interval $[0,T]$, namely, for all $T\ge 0$,
\begin{align*}
\E\sup_{0\le t\le T}F(t)^2<\infty.
\end{align*}  

For convenience, we first recap several notions from the technique of generic chaining in \cite[Chapter 2]{talagrand2014upper}. Consider the time interval $[0,T]$ and the distance $$d(s,t)\overset{def}{=}\sqrt{\E|F(t)-F(s)|^2}.$$ It is well-known that $d$ is a metric in $[0,T]$. For a set $A\subset[0,T]$, we denote by $\triangle(A)$ the diameter of $A$ with respect to metric $d$, that is
\begin{align*}
\triangle(A)\overset{def}{=}\inf_{s,t\in A}d(s,t).
\end{align*}
Next, we provide the definition of an \emph{admissible sequence}.
\begin{definition}
An admissible sequence is an increasing
sequence $\{A_n\}_{n\ge 0}$ of partitions of $[0,T]$ such that $A_0=[0,T]$ and for all $n\ge 1$, card$(A_n)$ is at most $N_n=2^{2^n}$.

Here, increasing sequence means every set of $A_{n+1}$ is contained in some set of $A_n$.
\end{definition}
Given an admissible sequence $A_n$ and a time $t\in[0,T]$, we denote by $A_n(t)$ the element in $A_n$ that contains $t$ and define $\gamma_2(T,d)$ given by
\begin{align*}
\gamma_2(T,d)\overset{def}{=}\inf\sup_{t\in[0,T]}\sum_{n\ge 0}2^{\frac{n}{2}}\triangle(A_n(t)),
\end{align*}
where the infimum is taken over all admissible sequences. We now state the following result asserting that under the conditions imposed on $F(t)$, $\E \sup_{0\le t\le T}F(t)^2$ is always finite.
\begin{lemma} \label{lem:sup.F_t^2}
Let $F(t)$ be a mean-zero Gaussian stationary process whose covariance function $K$ is in $C^1(\rbb)$. Then, for all $T\ge 0$, there exists a positive constant $c(T)$ such that
\begin{equation} \label{ineq:sup.F_t^2}
\E\sup_{0\le t\le T}F(t)^2\le c(T).
\end{equation}
\end{lemma}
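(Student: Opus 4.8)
The plan is to bound $\E\sup_{0\le t\le T}F(t)^2$ by combining the generic chaining estimate recalled above with Gaussian concentration. The only role played by the hypothesis $K\in C^1$ is to control the canonical metric near the diagonal.

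First I would compute the canonical metric. By stationarity, $\E|F(t)-F(s)|^2=2K(0)-2K(|t-s|)$, so $d(s,t)^2=2\big(K(0)-K(|t-s|)\big)\ge 0$. Since $K$ is $C^1$, for $0\le h\le T$ we have $K(0)-K(h)=-\int_0^h K'(u)\,du\le h\,\sup_{[0,T]}|K'|$, whence
\[
d(s,t)\le C_T\,|t-s|^{1/2},\qquad C_T:=\Big(2\sup_{[0,T]}|K'|\Big)^{1/2},\qquad s,t\in[0,T].
\]
Thus the $d$-geometry on $[0,T]$ is a H\"older-$\tfrac12$ rescaling of the Euclidean one, and in particular $\triangle(I)\le C_T|I|^{1/2}$ for any subinterval $I$, where $|I|$ denotes its length.

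Next I would produce an admissible sequence and bound $\gamma_2(T,d)$. Let $A_0=\{[0,T]\}$ and, for $n\ge 1$, let $A_n$ be the partition of $[0,T]$ into $2^{\,2^n-1}$ consecutive closed subintervals of equal length $T\,2^{-(2^n-1)}$. Since $n\mapsto 2^n-1$ is nondecreasing and $2^{\,2^{n+1}-1}$ is a multiple of $2^{\,2^n-1}$, the $A_n$ are nested, and $\mathrm{card}(A_n)=2^{\,2^n-1}\le N_n=2^{2^n}$, so $\{A_n\}_{n\ge 0}$ is admissible. For $t\in[0,T]$, the metric estimate gives $\triangle(A_n(t))\le C_T\,\big(T\,2^{-(2^n-1)}\big)^{1/2}$ for $n\ge 1$ and $\triangle(A_0(t))\le C_T T^{1/2}$, so
\[
\sum_{n\ge 0}2^{n/2}\triangle(A_n(t))\le C_T\,T^{1/2}\Big(1+\sqrt 2\sum_{n\ge 1}2^{(n-2^n)/2}\Big)<\infty,
\]
the series converging since $2^n-n\to\infty$. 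Taking the supremum over $t$ and then the infimum over admissible sequences yields $\gamma_2(T,d)<\infty$. (The same bound also follows from Dudley's entropy integral, since the metric estimate gives $d$-covering numbers of $[0,T]$ of order $1/\varepsilon^2$, hence $\log$-covering numbers of order $\log_+(1/\varepsilon)$, which is integrable.)

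Finally I would upgrade this to the desired second-moment bound. By the generic chaining upper bound for centered Gaussian processes \cite[Theorem 2.4.1]{talagrand2014upper}, $\E\sup_{0\le t\le T}F(t)\le L\,\gamma_2(T,d)<\infty$ for a universal constant $L$; in particular $F$ has a modification with a.s.\ bounded (indeed uniformly continuous) sample paths on $[0,T]$, with which we henceforth work. Set $M:=\sup_{0\le t\le T}F(t)$. The Borell--TIS inequality gives $\P(|M-\E M|>u)\le 2e^{-u^2/(2K(0))}$ for all $u>0$, the variance proxy being $\sup_{0\le t\le T}\E F(t)^2=K(0)$; hence $\mathrm{Var}(M)=\int_0^\infty 2u\,\P(|M-\E M|>u)\,du\le 4K(0)$ and $\E M^2=(\E M)^2+\mathrm{Var}(M)<\infty$. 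Since $-F$ is again a centered stationary Gaussian process with covariance $K$, the same estimate applies to $\widetilde M:=\sup_{0\le t\le T}(-F(t))$. Then $\sup_{0\le t\le T}F(t)^2=\big(\sup_{0\le t\le T}|F(t)|\big)^2\le (M+\widetilde M)^2\le 2M^2+2\widetilde M^2$, and taking expectations proves \eqref{ineq:sup.F_t^2} with $c(T):=2\E M^2+2\E\widetilde M^2$. I expect the main obstacle to be the $\gamma_2$ estimate --- choosing partitions whose cardinalities respect the $2^{2^n}$ growth while keeping their diameters summable against the weights $2^{n/2}$; the only other nonroutine point, passing from $\E\sup F$ to $\E\sup F^2$, is handled by the standard Gaussian concentration input.
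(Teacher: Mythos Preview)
Your proof is correct and the core is the same as the paper's: both use the $C^1$ hypothesis to get $d(s,t)\le C_T|t-s|^{1/2}$, build a dyadic admissible sequence of $[0,T]$, and sum $2^{n/2}\triangle(A_n(t))$ to conclude $\gamma_2(T,d)<\infty$. The only real difference is how the second moment is reached. The paper writes $\sup_t F(t)^2\le 2\sup_{s,t}(F(t)-F(s))^2+2F(0)^2$ and then invokes an $L^2$ chaining bound \cite[inequality~(2.49)]{talagrand2014upper} directly on the increment supremum; you instead bound $\E\sup_t F(t)$ by chaining and then upgrade to a second moment via Borell--TIS concentration, handling $\sup|F|$ by symmetry. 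Your route is more self-contained and makes the passage to second moments explicit; the paper's is shorter if one is willing to cite the squared chaining inequality as a black box.
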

\begin{proof}
We first observe that
\begin{align*}
\sup_{0\le t\le T}F(t)^2=\sup_{0\le t\le T}\big(F(t)-F(0)+F(0)\big)^2&\le 2\sup_{0\le t\le T}\big(F(t)-F(0)\big)^2+2F(0)^2 \\
&\le 2\sup_{0\le t,s\le T}\big(F(t)-F(s)\big)^2+2F(0)^2,
\end{align*}
whence
\begin{align*}
\E\sup_{0\le t\le T}F(t)^2\le 2\E\sup_{0\le t,s\le T}\big(F(t)-F(s)\big)^2+2K(0).
\end{align*}
It therefore suffices to establish an upper bound for $\E\sup_{0\le t,s\le T}\big(F(t)-F(s)\big)^2$. 

Now, since $F(t)$ is a mean-zero Gaussian process, $F(t)$ satisfies \cite[inequality (1.4)]{talagrand2014upper}, that is for all $r> 0$
\begin{align} \label{ineq:P(|F_t-F_s|>r)}
\P(|F(s)-F(t)|\ge r)\le  2\exp\Big(-\frac{r^2}{2d(s,t)^2}\Big).
\end{align}
Indeed, by Markov's inequality,
\begin{align*}
\P(|F(s)-F(t)|\ge r)&=\P\Big(\exp\Big(-\frac{r^2}{2|F(s)-F(t)|^2} \Big)\ge \frac{1}{\sqrt{e}}\Big)\le 2\E \exp\Big(-\frac{r^2}{2|F(s)-F(t)|^2} \Big).
\end{align*}
Observe that $f(x)=e^{-r/x}$ is concave down on $(0,\infty)$. So that, Jensen's inequality implies
\begin{align*}
\E \exp\Big(-\frac{r^2}{2|F(s)-F(t)|^2} \Big)
&\le  \exp\Big(-\frac{r^2}{2\E|F(s)-F(t)|^2} \Big),
\end{align*}
which proves~\eqref{ineq:P(|F_t-F_s|>r)}. Now, in light of \cite[inequality (2.49)]{talagrand2014upper}, there exists a positive constant $C$ independent of $T$ such that
\begin{align*}
\E\sup_{0\le t,s\le T}\big(F(t)-F(s)\big)^2\le C\gamma_2(T,d).
\end{align*}
It remains to show that $\gamma_2(T,d)$ is finite. To this end, consider the the following sequence $\{\Atilde_n\}_{n=0}$ given by
\begin{align*}
\Atilde_0=[0,T]\quad\text{and}\quad\Atilde_n=\Big[0,\frac{T}{N_n}\Big)\cup\Big[\frac{T}{N_n},\frac{2T}{N_n}\Big)\dots\Big[\frac{(N_n-1)T}{N_n},T\Big],\quad n\ge 1.
\end{align*}
It is straightforward to check that $\Atilde_n$ is an admissible sequence. For each $t\in[0,T]$, by definition of $\triangle$, we note that
\begin{align*}
\triangle(\Atilde_n(t))=\sup_{s,r\in \Atilde_n(t)}d(s,t)&=\sup_{s,r\in \Atilde_n(t)}\sqrt{\E(F(s)-F(r))^2}\\
&=\sup_{s,r\in \Atilde_n(t)}\sqrt{2(K(0)-K(|s-r|)}.
\end{align*}
By the choice of $\Atilde_n$, for all $r,s\in\Atilde_N(t)$, $|r-s|\le T/N_n$. So that,
\begin{align*}
\sup_{s,r\in \Atilde_n(t)}\sqrt{2(K(0)-K(|s-r|)}=\sup_{0\le s\le T/N_n}\sqrt{2(K(0)-K(s))}.
\end{align*}
Since $K\in C^1(\rbb)$, by the Mean-Value Theorem, for $s\in[0,T/N_n]$
\begin{align*}
|K(0)-K(s)|\le\max_{r\in[0,T]}|K'(r)|\cdot s\le \max_{r\in[0,T]}|K'(r)| \cdot \frac{T}{N_n},  
\end{align*}
implying
\begin{align*}
\triangle(\Atilde_n(t))\le \sqrt{\frac{2T}{N_n}\max_{r\in[0,T]}|K'(r)|}.
\end{align*}
Turning back to $\gamma_2(T,d)$, we note that
\begin{align*}
\gamma_2(T,d)\le \sup_{t\in [0,T]}\sum_{n\ge 0}2^{n/2}\triangle(\Atilde_n(t))&\le\sum_{n\ge 0} 2^{n/2} \sqrt{\frac{2T}{N_n}\max_{r\in[0,T]}|K'(r)|}\\
&=\sqrt{T\max_{r\in[0,T]}|K'(r)|}\sum_{n\ge 0}\frac{2^{\frac{n+1}{2}}}{\sqrt{N_n}}\\
&=\sqrt{T\max_{r\in[0,T]}|K'(r)|}\Big(\sqrt{2}+\sum_{n\ge 1}\frac{2^{\frac{n+1}{2}}}{2^{2^{n-1}}}\Big),
\end{align*}
which is clearly finite. Altogether, we arrive at the bound
\begin{align*}
\E\sup_{0\le t\le T}F(t)^2\le C\sqrt{T\max_{r\in[0,T]}|K'(r)|}+2K(0),
\end{align*}
thereby establishing~\eqref{ineq:sup.F_t^2} and completing the proof.
\end{proof}

\bibliographystyle{abbrv}
\bibliography{gibbsian-gle-bib}

\end{document}